\setlist[enumerate]{font={\upshape}, label=\arabic*., leftmargin=2.5em}
\setlist[itemize]{leftmargin=2.5em}
\setlist[description]{leftmargin=\parindent, 
	itemsep=3pt
}
\newlist{equivlist}{enumerate}{1}
\setlist[equivlist]{font={\upshape}, label=(\roman*)}
\tikzset{ 
	table/.style={
		matrix of nodes,
		nodes={rectangle,text width=1.75em,align=center},
		text depth=1.25ex,
		text height=2.5ex,
		nodes in empty cells
	}
}
\newtheorem{theorem}{Theorem}[section]
\newtheorem{lemma}[theorem]{Lemma}
\newtheorem{claim}{Claim}[theorem]
\Crefname{claim}{Claim}{Claims}
\newlist{lemlist}{enumerate}{1}
\setlist[lemlist]{font={\upshape}, label={\upshape(\alph*)},ref={\thelemma(\alph*)},leftmargin=*}
\newtheorem{conjecture}[theorem]{Conjecture}
\Crefname{conjecture}{Conjecture}{Conjectures}
\let\expandafter\oldproof\csname\string\proof\endcsname
\let\oldendproof\endproof
\renewenvironment{proof}[1][\proofname]{%
	\oldproof[\normalfont\bfseries #1]%
}{\oldendproof}
\newenvironment{subproof}[1][\normalfont\it Subproof]{%
	\begin{proof}[#1]%
	}{%
	\end{proof}%
}
\newcommand{\dd}{\textquotedblleft}
\newcommand{\ee}{\textquotedblright}
\newcommand{\mac}{\mathcal}
\newcommand{\eps}{\varepsilon}
\newcommand{\nin}{\notin}
\renewcommand{\subset}{\subseteq}
\newcommand{\erh}{Erd\H{o}s-Hajnal}
\newcommand{\LL}{,\ldots,}
\newcommand{\poly}{\operatorname{poly}}
\DeclarePairedDelimiter\abs{\lvert}{\rvert}%
\DeclarePairedDelimiter\ceil{\lceil}{\rceil}%
\DeclarePairedDelimiter\floor{\lfloor}{\rfloor}%
\newcommand{\leqnomode}{\tagsleft@true}
\newcommand{\reqnomode}{\tagsleft@false}
\date{October 25, 2023; revised January 27, 2026}
\begin{document}
	\title{Induced subgraph density. VII. The five-vertex path}
	\author{Tung Nguyen}
	\address{Princeton University, Princeton, NJ 08544, USA}
	\curraddr{Mathematical Institute and Christ Church, University of Oxford, UK}
	\email{\href{mailto:tunghn@math.princeton.edu}
		{tunghn@math.princeton.edu}}
	\author{Alex Scott}
	\address{Mathematical Institute,
		University of Oxford,
		Oxford OX2 6GG, UK}
	\email{\href{mailto:scott@maths.ox.ac.uk}{scott@maths.ox.ac.uk}}
	\author{Paul Seymour}
	\address{Princeton University, Princeton, NJ 08544, USA}
	\email{\href{mailto:pds@math.princeton.edu}{pds@math.princeton.edu}}
	\thanks{The first author was supported by AFOSR grant FA9550-22-1-0234, NSF grant DMS-2154169, a Porter Ogden Jacobus Fellowship, a Titchmarsh Research Fellowship, and a Christ Church Research Centre Grant.
		The second author was supported by EPSRC grant EP/X013642/1.
		The third author was supported by AFOSR grant FA9550-22-1-0234 and NSF grant DMS-2154169.}
	\subjclass{05C35, 05C55, 05C69, 05C75}
	\begin{abstract}
		We prove the \erh{} conjecture for the five-vertex path $P_5$; that is, there exists $c>0$ such that every $n$-vertex graph with no induced $P_5$ has a clique or stable set of size at least $n^c$.
		This completes the verification of the \erh{} conjecture for all five-vertex graphs.
		Our methods combine probabilistic and structural ideas with the iterative sparsification framework introduced in the third and fourth papers in the series.
	\end{abstract}
	\maketitle
	\section{Introduction}
	All graphs in this paper are finite and with no loops or parallel edges.
	For graphs $G,H$, a {\em copy} of $H$ in $G$ is an injective map $\varphi\colon V(H)\to V(G)$ satisfying $uv\in E(H)$ if and only if $\varphi(u)\varphi(v)\in E(G)$, for all $u,v\in V(H)$;
	and $G$ is {\em $H$-free} if there is no copy of $H$ in $G$.
	A celebrated conjecture of Erd\H{o}s and Hajnal from 1977~\cite{MR599767, MR1031262} says:
	\begin{conjecture}
		\label{conj:eh}
		For every graph $H$, there exists $\tau>0$ such that every $n$-vertex $H$-free graph has a clique or stable set of size at least $n^\tau$.
	\end{conjecture}
	Let us say that a graph $H$ {\em satisfies the \erh{} conjecture} if there exists $\tau>0$ such that every $n$-vertex $H$-free graph has a clique or stable set of size at least $n^\tau$. 
	Thus $H$ satisfies the \erh{} conjecture if and only if $\overline H$ does, where $\overline H$ denotes the complement of $H$.
	
	Erd\H{o}s and Hajnal~\cite{MR1031262} themselves proved that the conjecture holds
	for all graphs $H$ with at most four vertices, and Gy\'arf\'as~\cite{MR1425208} brought attention to the five-vertex case over 25 years 
	ago (and this was reiterated in~\cite{MR3150572,MR3497267,scott2022}).
	But even for five-vertex graphs the conjecture has been extremely resistant.
	By a theorem of Alon, Pach, and Solymosi~\cite{MR1832443} that the class of graphs satisfying the \erh{} conjecture is closed under vertex-substitution,
	the problem (for five-vertex graphs) reduces to showing \cref{conj:eh} for three graphs with five vertices: the bull (obtained from the four-vertex path by adding a new vertex adjacent to the two middle vertices),
	the five-cycle $C_5$, and the five-vertex path $P_5$ (or equivalently, the {\em house}~$\overline{P_5}$).
	In 2008, Chudnovsky and Safra~\cite{MR2462320} showed that the bull satisfies the conjecture (see~\cite{MR4563865,density4} for two new 
	proofs using different methods);
	and in 2023, Chudnovsky, Scott, Seymour, and Spirkl~\cite{MR4563865} showed that $C_5$ satisfies the conjecture. But until now
	the final case, $P_5$, has remained open.
	
	There have been several  successively stronger partial results for $P_5$. Let $G$ be $P_5$-free, with $n$ vertices, and let $m$
	be the size of its largest clique or stable set. There exists $c>0$ (not depending on $G$) such that: 
	\begin{itemize}
		\item $m\ge 2^{c(\log n)^{1/2}}$, by a general theorem of Erd\H{o}s and Hajnal~\cite{MR1031262}. (This bound is not special to $P_5$, and the same holds with any excluded induced subgraph $H$.) More recently the bound was improved to
		$m\ge 2^{c(\log n\log \log n)^{1/2}}$ (again, for any $H$)~\cite{density1}.
		\item $m\ge 2^{c(\log n)^{2/3}}$, by a result of Blanco and Buci\'c~\cite{bb2022}.
		\item $m\ge 2^{(\log n)^{1-o(1)}}$ (this in fact holds when $H$ is a path of any length)~\cite{density5}.
	\end{itemize}
	But finally we can prove the full conjecture for $P_5$:
	\begin{theorem}
		\label{thm:p5eh}
		$P_5$ satisfies the \erh{} conjecture.
	\end{theorem}
	As in some previous papers of this series, our main result is in a more general form and says that $P_5$ actually satisfies the polynomial form of a theorem of R\"odl.  To discuss this we need some further definitions and results.
	For a graph $G$, $\abs G$ denotes the number of vertices of $G$.
	For $\eps>0$, we say that $G$ is {\em $\eps$-sparse} if its maximum degree is at most $\eps\abs G$, and {\em $\eps$-restricted} if one of $G,\overline G$ is $\eps$-sparse. We also say $S\subseteq V(G)$ is {\em $\eps$-restricted} if $G[S]$ is {\em $\eps$-restricted}.
	
	R\"odl's theorem~\cite{MR837962} states that:
	\begin{theorem}
		\label{thm:rodl}
		For every $\eps\in(0,1/2)$ and every graph $H$, there exists $\delta>0$ such that every $H$-free graph $G$ has an $\eps$-restricted induced subgraph with at least $\delta\abs G$ vertices.
	\end{theorem}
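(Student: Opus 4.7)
The plan is to prove \cref{thm:rodl} by the classical combination of Szemer\'edi's regularity lemma, Ramsey's theorem, and the counting lemma for regular pairs; this is essentially R\"odl's original argument. Fix $\eps\in(0,\tfrac12)$ and $H$, write $h:=|V(H)|$, and let $R_3(h)$ denote the three-colour Ramsey number. Choose parameters $\eps'\in(0,\eps/2)$ and $\eta\in(0,\eps')$ small enough that the counting lemma guarantees the following: whenever $V_1,\ldots,V_h$ are disjoint vertex sets of size at least some absolute constant, pairwise $\eta$-regular with density in $[\eps',1-\eps']$, the ambient graph contains every $h$-vertex graph (and in particular $H$) as an induced subgraph on a transversal of the $V_i$. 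Additionally demand $\eta\le 1/(3R_3(h))$.

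Apply the regularity lemma to $G$ with parameter $\eta$, obtaining an equitable partition $V(G)=V_1\sqcup\cdots\sqcup V_k$ with $k$ bounded by a constant $K=K(\eta)$ and at most $\eta k^2$ pairs not $\eta$-regular. The ``irregularity graph'' on $[k]$ has average degree at most $2\eta k$, so by a Caro--Wei-type bound it contains an independent set $I$ of size $\ge 1/(3\eta)\ge R_3(h)$; equivalently, there exist $R_3(h)$ parts every two of which form an $\eta$-regular pair. Colour each pair in $\binom{I}{2}$ as \emph{high} (density $\ge 1-\eps'$), \emph{low} (density $\le\eps'$), or \emph{medium} (density in $(\eps',1-\eps')$), and apply Ramsey's theorem to extract a monochromatic $h$-clique $\{i_1,\ldots,i_h\}\subseteq I$.

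If this clique is \emph{medium}, the parts $V_{i_1},\ldots,V_{i_h}$ satisfy the hypotheses of the counting lemma, which embeds $H$ as an induced subgraph of $G$ and contradicts $H$-freeness. If it is \emph{high}, set $U=V_{i_1}\cup\cdots\cup V_{i_h}$; by $\eta$-regularity, for each pair $(a,b)$ at most $\eta|V_{i_a}|$ vertices of $V_{i_a}$ have more than $(\eps'+\eta)|V_{i_b}|$ non-neighbours in $V_{i_b}$. Deleting these ``atypical'' vertices across all $b\ne a$ costs at most $h\eta|V_{i_a}|$ from each $V_{i_a}$, leaving $U'\subseteq U$ in which every vertex has at most $h(\eps'+\eta)|V_1|\le\eps|U'|$ non-neighbours in $U'$; thus $G[U']$ is $\eps$-dense. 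The \emph{low} case is symmetric and produces an $\eps$-sparse induced subgraph of the same size $|U'|\ge(1-h\eta)\cdot h\lfloor n/k\rfloor\ge\delta n$, where $\delta=\delta(\eps,H)>0$ depends on the upper bound $K(\eta)$ on $k$.

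The main obstacle is arranging the parameter hierarchy $\eta\ll\eps'\ll\eps,1/h$ consistently, so that the counting lemma applies cleanly in the \emph{medium} case, the cleanup of atypical vertices in the extreme cases loses at most a small fraction, and $\eta\le 1/(3R_3(h))$ accommodates the independent-set step. None of these ingredients requires new ideas. The resulting $\delta$ is of tower type in $1/\eps$ and $h$, reflecting Szemer\'edi's lemma---bypassing this tower cost (in the case $H=P_5$) is precisely the motivation for the polynomial-R\"odl framework developed later in the paper.
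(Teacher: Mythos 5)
The paper does not prove \cref{thm:rodl}; it is cited from R\"odl~\cite{MR837962}, and the surrounding text merely recalls that the original argument used the regularity lemma with tower-type dependence. So your sketch is not competing with an in-paper proof. That said, your sketch has a concrete gap in the \emph{high}/\emph{low} cases.

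You extract a monochromatic clique of size $h=|V(H)|$ from the three-colour Ramsey step, set $U=V_{i_1}\cup\cdots\cup V_{i_h}$, and after deleting atypical vertices claim that every remaining vertex has at most $h(\eps'+\eta)|V_1|\le\eps|U'|$ non-neighbours in $U'$. This accounts only for non-neighbours in the \emph{other} parts. The regularity lemma says nothing about the adjacency structure inside a single part, so a vertex $v\in V_{i_a}\cap U'$ can be non-adjacent to all of $V_{i_a}\cap U'\setminus\{v\}$, contributing roughly $|U'|/h$ non-neighbours. Hence the true bound on $\Delta(\overline{G}[U'])$ is about $\bigl(1/h+(\eps'+\eta)\bigr)|U'|$, which exceeds $\eps|U'|$ whenever $h<1/\eps$ --- e.g.\ $H$ a triangle and $\eps=10^{-3}$. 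Since $\eps$ is quantified universally and $h$ is fixed, the argument as written does not yield an $\eps$-restricted subgraph.

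The fix is standard and small: take the Ramsey clique to have size $s:=\max\bigl(h,\lceil 2/\eps\rceil\bigr)$ rather than $h$ (demanding $\eta\le 1/(3R_3(s))$ so that enough pairwise-regular parts exist), and in the \emph{medium} case embed $H$ using any $h$ of the $s$ parts. Then in the \emph{high} (resp.\ \emph{low}) case the within-part contribution to the degree of a vertex in $\overline{G}[U']$ (resp.\ $G[U']$) is at most $|U'|/s\le\eps|U'|/2$, and with $\eps'+\eta\le\eps/2$ the cross-part contribution is also at most $\eps|U'|/2$, giving the desired $\eps$-restriction. With that correction the outline is the standard R\"odl argument and gives $\delta$ of tower type, as you note.
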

	The original proof of R\"odl used the regularity lemma and gave tower-type dependence of $\delta$ on $\eps$.
	Fox and Sudakov~\cite{MR2455625} provided a different proof that gives the better bound $\delta=2^{-d(\log\frac1\eps)^2}$ (here $d>0$ is some constant depending on $H$ only);
	and currently the best known bound for this theorem is $\delta=2^{-d(\log\frac1\eps)^2/\log\log\frac1\eps}$, obtained in~\cite{density1}.
	Fox and Sudakov~\cite{MR2455625} also made the much stronger conjecture that $\delta$ can be taken to be a power of $\eps$.
	Accordingly, let us say that a graph $H$ has the {\em polynomial R\"odl property} if there exists $d>0$ such that for every $\eps\in(0,1/2)$, every $H$-free graph $G$ has an $\eps$-restricted induced subgraph with at least $\eps^d\abs G$ vertices.
	It is not hard to check that every graph with the polynomial R\"odl property satisfies the \erh{} conjecture.
	The Fox--Sudakov conjecture is then the following:
	\begin{conjecture}
		\label{conj:polyrodl}
		Every graph $H$ has the polynomial R\"odl property.
	\end{conjecture}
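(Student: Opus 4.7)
The plan is to attempt \cref{conj:polyrodl} by strong induction on $\abs{V(H)}$, following the template developed in this series and pushing the iterative sparsification framework beyond what is needed for $P_5$ alone. The base cases $\abs{V(H)}\le 4$ are easy: for $H$ on at most three vertices $G$ is either complete or edgeless on a large set by Ramsey, and for the few graphs on four vertices one can either invoke known polynomial R\"odl results or handle them directly.

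First I would verify that the polynomial R\"odl property behaves well under two basic operations: complementation (immediate from the definition, since $G$ is $\eps$-restricted iff $\overline G$ is) and vertex substitution. The latter would be an analogue of the Alon--Pach--Solymosi theorem for the \erh{} property; the proof is not mechanical, but the usual two-step argument---take a substitution $H=H_1[H_2]$, apply polynomial R\"odl for $H_2$ to extract a large $\eps^{O(1)}$-restricted set in an $H$-free graph, and then iterate inside the nearly-homogeneous parts using polynomial R\"odl for $H_1$---looks plausible, provided one is careful about how sparseness and density degrade at each stage. This step reduces \cref{conj:polyrodl} to the case when $H$ is \emph{prime} (admits no non-trivial homogeneous set).

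Next, in the inductive step I would fix a prime $H$ with $\abs{V(H)}\ge 5$ and a vertex $u\in V(H)$, and consider an $H$-free graph $G$. For every vertex $v$ of $G$, the graph induced on $N_G(v)$ is free of the ``rooted'' subgraph $(H-u,N_H(u))$, and similarly the non-neighborhood avoids the rooted subgraph for non-neighbors of $u$; this is strictly stronger than being $(H-u)$-free. One would now invoke the iterative sparsification machinery of~\cite{density5}, employed in the current paper for $P_5$: at each step, either $G$ already contains a large $\eps$-restricted induced subgraph, or one passes to a sub-structure in which the bipartite interaction between a small set and the rest is close to a constant density. Once the neighborhood structure is clean enough, the induction hypothesis applied to a proper induced subgraph $H'\subsetneq H$---arising either as $H-u$ or as a rooted restriction---should deliver an $\eps$-restricted set of polynomial size inside $G$.

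The main obstacle, and the reason why \cref{conj:polyrodl} is open in general, will be controlling how the polynomial dependence on $\eps$ degrades as the sparsification is iterated. Each step loses a factor that is polynomial in $\eps$, and one needs the total number of steps to be bounded in terms of $\abs{V(H)}$ alone, not in terms of $\eps$. For $P_5$ (and for the bull, $C_5$, and the paths handled in~\cite{density5}) the specific combinatorial structure lets one cap the number of iterations; for a general prime $H$ no such argument is known, and genuinely new ideas would be required---indeed this is precisely the gap between the quasi-polynomial bound of~\cite{density5} and the polynomial bound that \cref{conj:polyrodl} demands, and I do not see how to close it with current techniques.
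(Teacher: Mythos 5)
You have not proved the statement, and you acknowledge as much in your final paragraph. Note first that \cref{conj:polyrodl} is stated in the paper as a conjecture (the Fox--Sudakov conjecture); the paper does not claim a proof of it, and only establishes the special case $H=P_5$ (\cref{thm:main}), so there is no proof in the paper to compare yours against. What you have written is a programme, and its two load-bearing steps are both unsupported. First, the reduction to prime $H$ requires that the polynomial R\"odl property be closed under vertex-substitution. The Alon--Pach--Solymosi argument gives this for the \erh{} property, but its transfer to polynomial R\"odl is not known and is not routine: the paper explicitly records that it is not even known whether the \erh{} property implies the polynomial R\"odl property, which is a warning that quantitative forms of these closure arguments can fail; your two-step sketch (extract a restricted set for $H_2$, then ``iterate inside the nearly-homogeneous parts'' for $H_1$) does not explain how the $\eps$-dependence survives the iteration, which is exactly where the difficulty lies. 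This is why the series works with stronger, better-behaved properties (e.g.\ the viral property) when induction or substitution is needed.

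Second, and more fundamentally, your inductive step for prime $H$ contains no argument at all at the decisive point: you need the number of sparsification rounds, and the polynomial loss per round, to be controlled in terms of $\abs{V(H)}$ only, and you concede that no such control is available for general $H$. That concession is the whole problem --- it is precisely the gap between the $2^{-(\log\frac1\eps)^{1+o(1)}}$-type bounds of the earlier papers in the series and the polynomial bound demanded by \cref{conj:polyrodl}. The paper's contribution for $P_5$ rests on structure specific to $P_5$ (the upside-down comb argument of \cref{lem:house1}, purity via anticonnected blocks, and the semisparse blockades of \cref{lem:robust0} feeding a second round of sparsification), none of which you supply an analogue of for general prime $H$. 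So the proposal does not establish the conjecture; it restates the known strategy and identifies, correctly, the open obstruction.
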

	
	As mentioned above, the main result of this paper says that \cref{conj:polyrodl} holds for $H=P_5$, which contains \cref{thm:p5eh}:
	\begin{theorem}
		\label{thm:main}
		$P_5$ has the polynomial R\"odl property.
	\end{theorem}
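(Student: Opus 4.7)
Following the iterative sparsification framework developed in the earlier papers of this series, the plan is to construct, for any $P_5$-free graph $G$ and $\eps\in(0,\tfrac12)$, a chain of induced subgraphs $G=G_0\supset G_1\supset\cdots\supset G_k$ with $k=O(\log(1/\eps))$ such that at each step the ``sparsity bias'' improves by a factor of $2$ while the number of vertices decreases by only a constant factor. Telescoping then yields an $\eps$-restricted induced subgraph of size at least $\eps^d\abs G$ for an absolute constant $d>0$, which is exactly the polynomial R\"odl property.

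\textbf{The single-step lemma.} The main effort is therefore a lemma of the form: if $G$ is $P_5$-free and not already $\tfrac{\eps_0}{2}$-restricted, then $G$ contains an induced subgraph $G'$ with $\abs{G'}\ge c\abs G$ whose bias parameter is at most $\eps_0/2$, for an absolute constant $c\in(0,1)$. The decisive point is that the loss must be a constant independent of $\eps_0$; even a polynomial-in-$\eps_0$ loss at each step would accumulate to $\eps^{\Theta(\log(1/\eps))}$ and reproduce only the quasi-polynomial bounds already known from \cite{density1,density5}. This is where $P_5$-freeness (rather than general $H$-freeness) must be used in an essential way.

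\textbf{Exploiting the structure of $P_5$-free graphs.} To establish the single-step lemma I would combine a probabilistic ingredient with $P_5$-specific structure. In a $P_5$-free graph the second neighborhood $N^2(v)$ of any vertex $v$ has a very restricted adjacency pattern with $N(v)$: if $u\in N^2(v)$ and $w$ is a common neighbor of $u$ and $v$, then any non-neighbor of $u$ in $N(v)\cap N(w)$ would-together with the $P_4$ through $u$ and $v$-produce an induced $P_5$, forcing $u$ to be complete to a large piece of $N(v)$. Sampling $v$ uniformly (or, more likely, a short random tuple) and averaging the resulting constraint should let us pass from a ``balanced'' $G$ to a linear-size subset whose induced subgraph is either substantially sparser or substantially denser than $G$ itself. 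Iterating this step $O(\log(1/\eps))$ times inside the sparsification framework drives the bias parameter down to $\eps$.

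\textbf{Main obstacle.} The hardest point is beating the trivial bound in the single-step lemma. The obvious move - choosing a low-degree vertex $v$ and restricting to $V(G)\setminus N[v]$ - shrinks the maximum degree but loses a factor of order $\eps_0$ in size, which is fatal over $\log(1/\eps)$ iterations. Saving a constant factor at each step will require a case analysis on the adjacency pattern between $N(v)$ and $V(G)\setminus N[v]$, combined with a probabilistic averaging argument that is specific to the forbidden-$P_5$ setting and exploits the structural lemma above. This constant-loss reduction, spliced into the iterative sparsification machinery already set up in the series, is what I expect to occupy the bulk of the proof.
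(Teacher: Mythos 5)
Your high-level framing --- iterative sparsification --- is the right umbrella, but the specific plan misidentifies the parameter regime that actually makes the argument work, and the structural lemma you sketch is not the one the paper proves (or anything close to it).

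The critical misunderstanding is the claim that the single-step lemma must have a constant vertex loss: ``even a polynomial-in-$\eps_0$ loss at each step would accumulate to $\eps^{\Theta(\log(1/\eps))}$.'' That is only true if the sparsity improves by a constant \emph{factor} per step, as you propose. The paper's iterative sparsification (described explicitly in Section 2) instead targets a \emph{polynomial} improvement: at each step one passes from a $y$-restricted subgraph to a $y'$-restricted one with $y^D\le y'\le y^d$ for fixed $D\ge d>1$, losing only $\poly(y')$ of the vertices. Because the sparsity exponent then grows \emph{geometrically}, the number of iterations is $O(\log\log(1/\eps))$ rather than $O(\log(1/\eps))$, and the geometric series $\sum_i d^i$ implies the accumulated loss is still $\eps^{O(1)}$: the loss is dominated by the final step. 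So the ``constant-loss single-step lemma'' you say is the crux is not required, and indeed the paper never establishes such a lemma --- its key lemmas (e.g.\ Lemma \ref{lem:house7}) have vertex loss of the form $y^{16d^3}$ while improving the sparsity from $y$ to $y^d$. Chasing a constant-loss lemma seems both unnecessary and much harder than what is actually proved.

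The structural heart of the proof is also missing from your sketch. The second-neighborhood argument you describe (using a $P_4$ through $u,v$) is the kind of local observation that underlies many earlier quasi-polynomial results, but it does not by itself give the polynomial gain. The paper's actual mechanism is a \emph{two-round} iterative sparsification. The first round establishes ``niceness'' (Lemma \ref{lem:robust0}): in any $\overline{P_5}$-free graph one can find a blockade of arbitrary prescribed length in which every pair of blocks is complete or weakly sparse; the key gadget there is an upside-down comb (Lemma \ref{lem:combs}) attached to a high-degree vertex, with anticonnected blocks, so that mixedness between blocks forces a house $\overline{P_5}$. The second round feeds these very long semisparse blockades into a $y$-sparse graph: a vertex mixed on many blocks must see pairwise weakly-sparse blocks (else it creates a house with two complete anticonnected blocks), which yields a much sparser subset; otherwise some block is nearly anticomplete to the rest of the graph. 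Iterating gives a \emph{complete or anticomplete} blockade, and the final extraction of the $\eps$-restricted subgraph uses that the pattern of complete/anticomplete pairs is a cograph and hence has a $\sqrt{q}$ clique or stable set (Theorem \ref{thm:blocks}). None of these ingredients --- combs, anticonnected blocks, the niceness intermediate, or the cograph finish --- appear in your plan, and without something of this strength the sketch does not support the claimed single-step lemma.
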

	
	It was recently shown by Buci\'c,
	Fox and Pham \cite{bfp} that for every $H$, $H$ satisfies the Fox--Sudakov conjecture if and only if $H$ satisfies the 
	Erd\H os-Hajnal conjecture.  Thus \cref{thm:p5eh} and \cref{thm:main} are equivalent.  However, for our proof method, it is convenient 
	to prove the result in the stronger polynomial R\"odl form, as it allows us to approach the result through a process where we iteratively decrease $\eps$.
	
	Since this paper was submitted for publication, the first author has proved a much stronger result, that there is a positive integer $k$ for which every $P_5$-free graph has chromatic number at most the $k$th power of its clique number~\cite{2025pc5}. That result implies \cref{thm:p5eh}, but its proof uses \cref{thm:p5eh} as a critical input and adapts some of the ideas introduced here, and so does not supersede this paper.

	
	\section{A few definitions}
	\label{sec:defs}
	
	It will be useful to gather together some definitions that we will use throughout the paper.
	If $k\ge 1$ is an integer, we define $[k]:=\{1,2,\ldots,k\}$.
	If $G$ is a graph, and $A,B\subset V(G)$ are disjoint, we say that $(A,B)$ is {\em anticomplete} in $G$ (or $A$ is {\em 
		anticomplete} to $B$ in $G$) if there is no edge between $A,B$;
	and we  say that $(A,B)$ is {\em complete} in $G$ (or $A$ is {\em complete} to $B$ in $G$) if $(A,B)$ is anticomplete in $\overline G$.
	A vertex $v\in V(G)\setminus A$ is {\em mixed} on $A$ if it has both a neighbour and a nonneighbour in $A$.

	A {\em blockade} in $G$ is a sequence $\mac B=(B_1,\ldots,B_k)$ of disjoint (and possibly empty) subsets of $V(G)$; its {\em length} is $k$ and its {\em width} is $\min_{i\in[k]}\abs{B_i}$.
	For $\ell,w\ge0$, $\mac B$ is an {\em $(\ell,w)$-blockade} if it has length at least $\ell$ and width at least $w$.
	We say that the blockade $\mac B$ is 
	\begin{itemize}
		\item  {\em pure} if, for all distinct $i, j$, the pair $(B_i,B_j)$ is either complete or anticomplete;
		\item  {\em complete} in $G$ if $(B_i,B_j)$ is complete in $G$ for all distinct $i,j$; and 
		\item {\em anticomplete} in $G$ if $(B_i,B_j)$ is anticomplete in $G$ for all distinct $i,j$.
	\end{itemize}
	Note that being pure is a much weaker property than being complete or anticomplete, as there might be a mixture of complete and anticomplete pairs.
	In general, complete or anticomplete blockades that are long and wide are highly desirable.  Pure blockades are also helpful, but typically require further treatment. 
	
	For $x>0$ and disjoint $A,B\subset V(G)$, we say that $B$ is {\em $x$-sparse} to $A$ in $G$ if every vertex in $B$ has at most $x\abs A$ neighbours in $A$.
	For $A,B\ne\emptyset$, the {\em edge density} between $A,B$ in $G$ is the number of edges between $A,B$ in $G$ divided by $\abs A\abs B$;
	and we say that $(A,B)$ is {\em weakly $x$-sparse} in $G$ if the edge density between $A,B$ in $G$ is at most $x$.
	A blockade $\mac B=(B_1,\ldots,B_k)$ in $G$ is {\em $x$-sparse} in $G$ if $B_j$ is $x$-sparse to $B_i$ in $G$ for all $i,j\in[k]$ with $i<j$.

	\section{Some proof ideas}
	\label{sec:sketch}

	A class of graphs is {\em hereditary} if it is closed under taking induced subgraphs.
	A hereditary class $\mathcal G$ of graphs has the {\em Erd\H os-Hajnal property} if there is some $\tau>0$ such that every 
	$G\in\mathcal G$ has a clique or stable set of size at least $|G|^\tau$.  Thus, a graph $H$ satisfies the Erd\H os-Hajnal conjecture if and only if the class of all $H$-free graphs has the Erd\H os-Hajnal property. The goal of this paper is to prove that the class of $P_5$-free graphs has the Erd\H os-Hajnal property.
	
	One approach to proving that a class has the Erd\H os-Hajnal property is to show that we can always find a large complete or anticomplete pair of sets of vertices in each graph in the class.  More precisely, we say that $\mathcal G$ has the {\em strong Erd\H os-Hajnal 
		property} if there is $c>0$ such that, for every $G\in\mathcal G$ with at least two vertices there are disjoint $A,B\subset V(G)$ such that $|A|,|B|\ge c|G|$ and $A,B$ are either complete or anticomplete (in other words, there is a pure blockade of length 2 and linear width).  
	
	It is straightforward to show that the strong Erd\H os-Hajnal property implies the Erd\H os-Hajnal property (see \cite{alonplus, foxpach}). The strong Erd\H os-Hajnal property has received significant attention.
	For example, the following was proved in~\cite{MR4170220} (see \cite{pure2} for another example):
	\begin{theorem} \label{pure1a}
		For every forest $H$, there exists $c>0$ such that if $G$ is $H$-free and $\overline H$-free and $|G|\ge 2$, 
		then there exist disjoint $A,B\subseteq V(G)$ with $|A|,|B|\ge c|G|$ such that $A,B$ are complete or anticomplete. If 
		neither of $H, \overline{H}$ is a forest,
		there is no such $c$.
	\end{theorem}
	In fact, this result characterizes when a hereditary class defined by a finite number of excluded induced subgraphs has the 
	strong Erd\H os-Hajnal property: if and only if we exclude both a forest and the complement of a forest (see~\cite{MR4170220} for further discussion).  Note that if we exclude only $P_5$ then we do not obtain the strong Erd\H os-Hajnal property.
	
	An important observation of Tomon \cite{tomon} is that longer blockades  can have smaller blocks and still be useful.  We say that a hereditary class $\mathcal G$ has the {\em quasi-Erd\H os-Hajnal property} if there is $c$ such that for every 
	$G\in \mathcal G$ there exists $k\ge 2$ such that $G$ has a complete or anticomplete blockade of length $k$ and width at least $|G|/k^c$.  
	Note that the length $k$ is allowed to depend on $G$ (indeed, if we could take $k$ to be a constant then $\mathcal G$ would have 
	the strong Erd\H os-Hajnal property); but it is important that the width of the blockade depends polynomially on the length. It is 
	straightforward to show that the quasi-Erd\H os-Hajnal property implies the Erd\H os-Hajnal property.  The reverse implication also holds, as the clique or stable set of size $|G|^\eps$ guaranteed by the Erd\H os-Hajnal property provides a complete or anticomplete blockade of length at least $|G|^\eps$ and width 1 (so we can take $c=1/\eps$).

	Proving that a class has the quasi-Erd\H os-Hajnal property has been a helpful approach (see \cite{pt,MR4563865}). It can also 
	sometimes be combined with other approaches: we can try to show that we get either a blockade with the required polynomial dependence, or 
	else some other good structure (for example, this was one part of the argument in~\cite{density6}).  However, it is not in general clear how to show that a class has the quasi-Erd\H os-Hajnal property.
	
	While long complete or anticomplete blockades are good for us, they might not exist, and instead, we must make do with blockades 
	that have weaker properties.  There are a number of different possibilities (several have been used in papers from this series), and the challenge is to prove the existence of blockades that are sufficiently long and sufficiently restricted to prove a strong result.
	
	For example, consider the effects of excluding a tree.  The main result in~\cite{MR4170220} was in fact deduced from the following stronger `one-sided' result:
	\begin{theorem} \label{pure1}
		For every forest $H$, there exists $c>0$ such that if $G$ is an $H$-free, $c$-sparse graph  with $|G|\ge 2$, then there exist disjoint $A,B\subseteq V(G)$ with $|A|,|B|\ge c|G|$ such that $A,B$ are anticomplete. If $H$ is not a forest, there is no such $c$.
	\end{theorem}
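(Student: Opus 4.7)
The two directions call for rather different techniques, so I would handle them separately.

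For the ``$H$ not a forest $\Rightarrow$ no such $c$'' direction: $H$ contains a cycle of some length $\ell$, and any graph of girth $>\ell$ is $H$-free. For each $c>0$, I would exhibit, for arbitrarily large $n$, a graph $G$ on $n$ vertices of girth $>\ell$ with maximum degree at most $cn$ and with the expansion property that any two sets of size $\ge cn$ have an edge between them. This rules out the required anticomplete pair. Either the probabilistic deletion method (alter $G(n,p)$ to delete one edge per short cycle) or explicit Ramanujan-type high-girth expanders supply such $G$.

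For the ``$H$ a forest $\Rightarrow$ such $c$ exists'' direction, I would induct on $|V(H)|$. The base case $|V(H)|\le 1$ is trivial; and if $H$ is disconnected with component $C$, apply the inductive hypothesis for $C$ to find an anticomplete pair $(A,B)$ in $G$, and recurse in the larger side with $H-C$. So assume $H$ is a tree on $\ge 2$ vertices. Pick a leaf $v\in V(H)$ with neighbour $u$, set $H':=H-v$, and let $G$ be $c$-sparse and $H$-free with $c=c(H)$ chosen small in terms of $|V(H)|$ and the inductive constant $c(H')$.

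Given any induced copy $\varphi\colon V(H')\to V(G)$, $c$-sparsity ensures that the set $X_\varphi$ of vertices of $V(G)\setminus\varphi(V(H'))$ anticomplete to $\varphi(V(H'))$ has size $\ge(1-|V(H')|c)|G|-|V(H')|$. Moreover $H$-freeness of $G$ forbids any vertex of $V(G)\setminus\varphi(V(H'))$ from being adjacent to $\varphi(u)$ while anticomplete to $\varphi(V(H')\setminus\{u\})$. I would then iterate: put $G_0:=G$ and, while $G_{i-1}$ contains an induced $H'$, pick a copy $\varphi_i$ and pass to $G_i:=G_{i-1}\cap X_{\varphi_i}$. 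If the iteration terminates with $G_k$ large and $H'$-free, the outer-inductive hypothesis for $H'$ applied to $G_k$ yields an anticomplete pair of linear width in $|G|$; otherwise, we accumulate many pairwise anticomplete copies of $H'$, and group them into two halves to produce the pair directly.

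The main obstacle is the quantitative bookkeeping to make both branches yield widths $\ge c(H)|G|$. The ``termination'' branch loses only a bounded factor, but the ``blockade'' branch naively yields only a constant width independent of $|G|$, so one must either show that the iteration must terminate once $|G|$ is large (exploiting the $H$-free constraint more deeply than in the obvious counting above), or use the inductive hypothesis inside each block of the blockade to boost its width. I expect this is the technical heart of the proof, and exactly where the \emph{forest} structure of $H$ is essential --- for non-forest $H$, the extension criterion for copies of $H'$ no longer has the simple form ``one missing neighbour, all others non-adjacent'' that makes the anticomplete structure visible.
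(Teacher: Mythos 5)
The paper does not prove this theorem: it is quoted as a known result from~\cite{MR4170220} (Chudnovsky, Scott, Seymour, and Spirkl, ``Pure pairs.~I''), and only the special case of $P_5$ (with specific constants) is proved here as \cref{lem:sparse}, by a connectivity argument tailored to the five-vertex path. So there is no ``paper's proof'' to compare against; I will instead assess your attempt on its own.

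Your ``$H$ not a forest'' direction is correct and standard: a $d$-regular high-girth graph (Ramanujan, say) of girth exceeding the length of a cycle in $H$ is $H$-free, is $c$-sparse once $n\ge d/c$, and the expander mixing lemma rules out anticomplete pairs of size $cn$ once $d$ is chosen large relative to $c$. Your reduction of the forest case to the case of a tree (by deleting the closed neighbourhood of a copy of one component) is also fine. The genuine gap is exactly where you flag it, in the tree case, and it is not a bookkeeping matter — the iteration as proposed simply does not produce what is needed. Each step removes $O(c|G|)$ vertices, so after $\Theta(1/c)$ steps you are either left with a still-large $H'$-free set (good) or you have accumulated $\Theta(1/c)$ pairwise anticomplete copies of $H'$; but the total number of vertices in those copies is $\Theta(|H'|/c)$, a constant independent of $|G|$, so no linear anticomplete pair comes out. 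Neither of your proposed fixes closes the gap: the iteration genuinely need not terminate with a large $H'$-free set (take $G$ to be a disjoint union of $|G|/|H'|$ copies of $H'$, which is $c$-sparse and $H$-free and contains linearly many pairwise anticomplete copies of $H'$), and ``boosting widths inside each block'' does not make sense when the blocks are single copies of $H'$ of constant size. The actual proof in~\cite{MR4170220} does not proceed by this iterated-removal scheme; the way sparsity and the tree structure interact to yield a \emph{linear} anticomplete pair is the real content of that theorem and is substantially more intricate than what is outlined here. The $P_5$-specific argument given in \cref{lem:sparse} of the present paper (repeatedly passing to a large connected component and exploiting a mixed vertex) gives a flavour of the kind of structural, rather than counting, argument that is required.
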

	
	It is straightforward, using \cref{thm:rodl} and \cref{pure1} (applied in the complement), to deduce the following, a version of \cref{pure1} 
	without the sparsity hypothesis:
	\begin{theorem} \label{pure1mixed}
		If $H$ is a forest, then for all $d$ with $0<d\le 1/2$ there exists $c>0$ such that if $G$ is an $\overline{H}$-free graph  with $|G|\ge 2$, then there exist disjoint $A,B\subseteq V(G)$ with $|A|,|B|\ge c|G|$ such that either $A,B$ are complete, or $A,B$ are weakly $d$-sparse to each other. If $H$ is not a forest, then for all $d$ with $0<d\le 1/2$, there is no such $c$.
	\end{theorem}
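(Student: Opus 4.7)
The plan is to derive both directions from the sparse variant \cref{pure1} by combining it with \cref{thm:rodl}, which supplies a sparse/dense dichotomy for any $\overline H$-free graph.

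For the positive direction, fix a forest $H$ and $d\in(0,1/2]$. Let $c_1>0$ be the constant from \cref{pure1} for $H$, and set $\eps=\min(c_1,d/3)$. By \cref{thm:rodl} applied with excluded subgraph $\overline H$, there is $\delta>0$ such that every $\overline H$-free $G$ contains an $\eps$-restricted induced subgraph $G'$ with $\abs{G'}\ge\delta\abs G$. If $\overline{G'}$ is $\eps$-sparse, then $\overline{G'}$ is $H$-free and $c_1$-sparse, so \cref{pure1} applied to $\overline{G'}$ yields disjoint $A,B\subset V(G')$ with $\abs A,\abs B\ge c_1\abs{G'}$ anticomplete in $\overline{G'}$---equivalently, complete in $G$. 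Otherwise $G'$ itself is $\eps$-sparse; partitioning $V(G')$ into halves $A,B$ of sizes $\lfloor\abs{G'}/2\rfloor$ and $\lceil\abs{G'}/2\rceil$, the edge density between $A,B$ is at most $\eps\abs{G'}/\abs B\le 3\eps\le d$, so $(A,B)$ is weakly $d$-sparse. Setting $c=\min(c_1,1/3)\delta$ therefore works whenever $\abs G$ is large enough for the halves to have size at least $c\abs G$; for smaller $\abs G$ singletons $A=\{u\},B=\{v\}$ suffice (they are either complete or weakly $0$-sparse), so after a mild further shrink of $c$ the positive direction is complete.

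For the negative direction, suppose $H$ contains a cycle and fix $d\in(0,1/2]$. Given any candidate $c>0$, set $c_0=(1-d)c/2\in(0,c)$ and invoke the second half of \cref{pure1} to produce an $H$-free $c_0$-sparse graph $G_0$ with $\abs{G_0}\ge 2$ and no disjoint anticomplete pair both of size at least $c_0\abs{G_0}$. Let $G=\overline{G_0}$, which is $\overline H$-free. A complete pair in $G$ is anticomplete in $G_0$, so some side has size less than $c_0\abs{G_0}\le c\abs G$. For any disjoint $A,B\subset V(G)$ with $\abs A,\abs B\ge c\abs G$, the non-edges of $G$ between $A$ and $B$ are the edges of $G_0$ between them, at most $c_0\abs{G_0}\abs A$ in number, so the edge density in $G$ is at least $1-c_0/c=(1+d)/2>d$. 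Hence $(A,B)$ is neither complete nor weakly $d$-sparse, so $G$ witnesses that $c$ fails.

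The only real difficulty is constant-tracking: $\eps$ must be at most $\min(c_1,d/3)$ so that the bisection density falls below $d$ while \cref{pure1} remains applicable to $\overline{G'}$, and dually $c_0$ must lie below $(1-d)c$ in the converse. Both choices are immediate within the given parameter ranges, and the argument is otherwise entirely mechanical, as the author's ``straightforward'' remark foreshadows.
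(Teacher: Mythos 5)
The paper states \cref{pure1mixed} without proof, remarking only that it follows straightforwardly from \cref{thm:rodl} and \cref{pure1} applied in the complement; your proposal carries out exactly that deduction and is correct. The positive direction via R\"odl's theorem followed by \cref{pure1} on $\overline{G'}$ when $\overline{G'}$ is $\eps$-sparse, or a bisection of $G'$ when $G'$ itself is $\eps$-sparse (with the small-$\abs G$ singleton fallback), and the negative direction via complementing a \cref{pure1} counterexample $G_0$ and checking that any large disjoint pair in $\overline{G_0}$ has density at least $1-c_0/c>d$, are both sound, including the constant-tracking.
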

	Thus if we exclude the complement of a forest\footnote{Note that we have switched from excluding a forest to excluding the complement of a forest: for convenience, we will work in the complement for most of this paper and exclude $\overline P_5$.} then we get a blockade of length 2 and linear width that is either complete or very sparse (as opposed to Theorem \ref{pure1a}, where the stronger assumption enables us to obtain a pair that is complete or anticomplete).

	Similarly, for longer blockades,	 
	it is helpful to move away from completeness or anticompleteness, and introduce a parameter $\eps>0$ that allows a certain amount of ``noise'' (parameterized by $\eps$) in part of the definition.  However, we insist on the following:
	\begin{itemize}
		\item the length and width of the blockade have a {\em polynomial} dependence on $\eps$ (or $1/\eps$);
		\item we can obtain such a blockade for {\em every} $\eps\in(0,1/2)$ (unlike the blockades we get from the
		quasi-Erd\H os-Hajnal property, which only need to exist for some $k$).
	\end{itemize}
	
	Let $H$ be a graph: we say that $H$ is {\em nice} (for lack of a better word) if there exist $a,b>0$ such that for every $\overline{H}$-free graph $G$ and every $\eps$ with $0<\eps\le 1/2$, there is an $(\eps^{-1},\floor{\eps^{a}\abs G})$-blockade $(B_1,\ldots,B_{\ell})$ in $G$, such that for all distinct $i,j\in[\ell]$, $(B_i,B_j)$ is either complete or weakly $\eps^b$-sparse in $G$. 
	
	A key lemma of this paper is that $P_5$ is nice; but before we 
	go on to its proof, let us consider niceness in general. Which graphs are nice? By taking $\eps=1/2$, \cref{pure1mixed} 
	implies that every nice graph is a forest; but perhaps all forests are nice. We have not been able to decide that, but
	we would like to make three points:
	\begin{itemize}
		\item Perhaps niceness is a halfway point towards proving 
		\cref{thm:main} for forests, because every forest $H$ with the polynomial R\"odl property is nice. To see this, suppose $H$ has the 
		polynomial R\"odl property; then we have some $d>0$ such that for every $\eps\in(0,\frac12)$ and every $\overline{H}$-free 
		graph $G$, there exists an $\eps^{2d}$-restricted $S\subseteq V(G)$ with $\abs S\ge \eps^{2d^2}\abs G$.
		If $G[S]$ is $\eps^{2d}$-sparse then it is easy to get a weakly $\eps^d$-sparse $(\eps^{-1},\floor{\eps^{10d^2}\abs G})$-blockade in $G[S]$ (by taking a suitable partition).
		If $\overline G[S]$ is $\eps^{2d}$-sparse then we can increase $d$ if necessary and iterate \cref{pure1} 
		to get a complete $(\eps^{-1},\floor{\eps^{10d^2}\abs G})$-blockade in $G[S]$ (we omit the details). 
		
		\item The niceness of a forest $H$ by itself does not seem  enough to prove the \erh{} conjecture (or polynomial R\"odl property) for $H$ 
		directly. Niceness gives us a blockade in which all the pairs are sparse or complete. We can make a graph with a vertex for each block, with an edge for each complete pair
		of blocks, and we would know that this ``pattern graph'' is $\overline{H}$-free, but we know nothing else about it. If we 
		apply induction to it, we prove just the ``near-polynomial R\"odl'' property of $H$ (that is, $\delta$ can be taken as $2^{-(\log\frac1\eps)^{1+o(1)}}$ in \cref{thm:rodl}), which implies the ``near-\erh{}'' property ($2^{(\log n)^{1-o(1)}}$ in place of $n^c$).
		
		\item Let us say $H$ is {\em strongly nice} if it satisfies the niceness condition with ``weakly $\eps^d$-sparse'' changed to ``anticomplete'': in other words, we require an $(\eps^{-1},\floor{\eps^{a}\abs G})$-blockade $(B_1,\ldots,B_{\ell})$ in $G$, such that for all distinct $i,j\in[\ell]$, $(B_i,B_j)$ is either complete or anticomplete. This is too strong to be interesting, because when $\eps$ is a constant that would mean every $H$-free graph contains a linear pure pair, which is not true unless $|H|\le 4$ (see~\cite{MR4170220}). 
		
		In the other direction, let us say $H$ is {\em weakly nice} if it satisfies the niceness condition with 
		``complete'' changed to ``weakly $\eps^b$-sparse in $\overline{G}$'': thus we ask for an $(\eps^{-1},\floor{\eps^{a}\abs G})$-blockade such that for all distinct $i,j\in[\ell]$, $(B_i,B_j)$ is weakly $\eps^b$-sparse in $G$ or $\overline G$. 
		This {\em is} still an interesting property. 
		We do not know that being weakly nice is equivalent to either the polynomial R\"odl property or the near-polynomial R\"odl property, but it is somewhere between them: every graph $H$ with the polynomial R\"odl property is weakly nice (not just forests); and every weakly nice graph has the near-polynomial R\"odl property.
		Also, it is not hard to see that if $H$ is weakly nice and satisfies the \erh{} conjecture then it has the polynomial R\"odl property.
	\end{itemize}
	
	Returning to $P_5$, the proof of \cref{thm:main} is in two parts: first we prove a lemma, and then we use the lemma to prove the main theorem. The lemma is of interest in its own right:
	\begin{lemma}
		\label{lem:robust0}
		There exists $d\ge40$ for which the following holds.
		Let $\eps\in(0,\frac12)$, and let $G$ be a $\overline{P_5}$-free graph with $\abs G\ge \eps^{-10d^2}$.
		Then there is an $(\eps^{-1},{\eps^{10d^2}\abs G})$-blockade $(B_1,\ldots,B_{\ell})$ in $G$ such that for all distinct $i,j\in[\ell]$, $(B_i,B_j)$ is either complete or  weakly $\eps^d$-sparse in $G$.
	\end{lemma}
	
	We prove \cref{lem:robust0} in \cref{sec:conductors}.  Then we apply it in \cref{sec:2ndround} to deduce:
	\begin{lemma}
		\label{lem:house0}
		There exists $a\ge1$ such that the following holds.
		For every $x\in(0,\frac12)$ and every $\overline{P_5}$-free graph $G$, either:
		\begin{itemize}
			\item $G$ has an $x$-restricted induced subgraph with at least $x^a\abs G$ vertices; or
			
			\item there is a complete or anticomplete $(k,\abs G/k^a)$-blockade in $G$, for some $k\in[2,1/x]$.
		\end{itemize}
	\end{lemma}
	The main result, \cref{thm:main}, can be deduced from \cref{lem:house0}
	in a page or so.
	
	Both the proof of \cref{lem:robust0}, and its application to prove \cref{lem:house0}, use a process of 
	iterative sparsification, which was introduced in \cite{density3,density4}
	and can be summarized as follows.
	We start with a graph $G$, that is $H$-free for some fixed $H$, and we are given $x$ with $0<x\le 1/2$. In order to prove
	the polynomial R\"odl property for $H$, we need to show that $G$ contains an $x$-restricted induced subgraph with at least 
	$\poly(x)\abs G$ vertices, where the polynomial depends on $H$ but not on $G$.  We can 
	assume that $x$ is at most any positive constant that is convenient. For the method to work, there needs to be a lemma that says 
	that for any value of $y\ge x$, if we have an induced subgraph $F$ of $G$ that is $y$-restricted, then either 
	\begin{itemize}
		\item there is an induced subgraph $F'$ of $F$ with $|F'|\ge \poly(y') |F|$ that is $y'$-restricted, where $y^D\le y'\le y^d$ 
		for some fixed $D\ge d>1$; or
		\item some other good thing happens.
	\end{itemize}
	
	To use the lemma, we choose a subgraph $F$ of linear size that is $y$-restricted for $y$ some small constant 
	(we can do this, for instance by applying R\"odl's theorem). Now we apply the lemma to $F$, and, if the ``other good thing'' does 
	not happen, we find $F'$ and $y'$. Repeat, and if the ``other good thing'' never happens, we
	recursively generate a nested sequence of induced subgraphs
	that are $y$-restricted for smaller and smaller values of $y$, and with size at least some polynomial in (the current value of) $y$
	times $|G|$. If $y$ becomes smaller than the target $x$, then the first time it does so, it is not {\em much} smaller than 
	$x$ (because it is not much smaller than the previous value of $y$), and then we have the $x$-restricted induced subgraph that we 
	wanted. So we can assume that at some stage the ``other good thing'' happens. 
	
	\section{Preliminaries}
	\label{sec:prelim}
	In this section we gather several basic results.
	A graph $G$ is {\em anticonnected} if $\overline G$ is connected; and an induced subgraph $F$ of $G$ is an {\em anticonnected component} of $G$ if $\overline F$ is a connected component of $\overline G$.
	The following fact says that graphs without large anticonnected components contain long and wide complete blockades.
	\begin{lemma}
		\label{lem:cores}
		Let $k\ge2$ be an integer, and let $G$ be a graph whose anticonnected components have size less than $\abs G/k$. Then there is a complete $(k,\abs G/k^2)$-blockade in $G$.
	\end{lemma}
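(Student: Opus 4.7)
The plan is to exploit the anticonnected components of $G$ as the natural building blocks. Let $C_1,\ldots,C_m$ be the anticonnected components of $G$; each has size less than $|G|/k$ by hypothesis, and together they partition $V(G)$. For distinct $a,b$, the sets $C_a$ and $C_b$ lie in different connected components of $\overline G$, so there are no edges between them in $\overline G$; equivalently, $C_a$ is complete to $C_b$ in $G$. Hence any blockade whose blocks are disjoint unions of the $C_i$'s is automatically complete in $G$, and the task reduces to partitioning $C_1,\ldots,C_m$ into $k$ classes, each of total size at least $|G|/k^2$.

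I would handle this partitioning step with a one-pass greedy: process the components in an arbitrary order, build $B_1,\ldots,B_{k-1}$ in turn by adding components to the current block $B_j$ until $|B_j|\ge |G|/k^2$ and then moving on to $B_{j+1}$, and finally put all remaining components in $B_k$. By construction, $|B_j|\ge |G|/k^2$ for every $j<k$. The routine check is that $|B_k|\ge |G|/k^2$ as well: just before its final addition, each $B_j$ with $j<k$ had size strictly below $|G|/k^2$, and the component added had size strictly below $|G|/k$, so the total used by $B_1,\ldots,B_{k-1}$ is less than $(k-1)(|G|/k^2 + |G|/k) = |G| - |G|/k^2$; this forces $|B_k|>|G|/k^2$ and simultaneously ensures that the greedy process does not exhaust the supply of components before $B_{k-1}$ is filled.

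This is essentially the whole argument, so there is no real obstacle; the only point worth flagging in advance is the bookkeeping that yields the sharp factor $1/k^2$ in the width, which comes from the telescoping identity $(k-1)(1/k + 1/k^2) = 1 - 1/k^2$.
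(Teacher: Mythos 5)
Your proof is correct, and the greedy argument works: the key observations are (i) distinct anticonnected components are complete to each other in $G$, so any partition of $V(G)$ into unions of anticonnected components forms a complete blockade; and (ii) the bookkeeping $(k-1)(1/k^2+1/k)=1-1/k^2$ guarantees both that $B_1,\ldots,B_{k-1}$ each reach size $|G|/k^2$ before the components run out and that $B_k$ retains at least $|G|/k^2$. The paper's proof is built on the same underlying idea but packages it differently: rather than running a one-pass greedy, it considers a partition of $V(G)$ into a complete blockade $(S_0,\ldots,S_n)$ with all blocks of size less than $|G|/k$ and with $n$ minimal, sorts by size, and observes that if two consecutive blocks are both below $|G|/(2k)$ they could be merged without violating the size bound, contradicting minimality; this forces all but one block to have size at least $|G|/(2k)$, which is a slightly sharper width than your $|G|/k^2$ (though both meet the lemma's claim for $k\ge 2$). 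Your approach is perhaps more elementary and self-contained, since it builds the blocks explicitly rather than extracting them from an extremal configuration; the paper's extremal formulation is a bit slicker and avoids tracking a running greedy invariant, at the cost of first having to observe (implicitly) that such a partition into small complete blocks exists at all.
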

	\begin{proof}
		By the hypothesis, there exists $n\ge0$ minimal for which there is a partition $S_0\cup S_1\cup\cdots\cup S_n=V(G)$
		such that $(S_0,S_1,\ldots,S_n)$ is a complete blockade in $G$ with $\abs{S_i}<\abs G/k$ for all $i\in\{0,\dots,n\}$.
		In particular $n+1>k$ and so $n\ge k$.
		We may assume $\abs{S_0}\le\abs{S_1}\le\ldots\le\abs{S_n}$.
		If there exists $i\ge1$ with $\abs{S_i}<\abs G/(2k)$,
		then $\abs{S_{i-1}\cup S_i}<\abs G/k$ and so $(S_0,\ldots,S_{i-2},S_{i-1}\cup S_i,S_{i+1},\ldots,S_n)$ would contradict the minimality of $n$.
		Hence $\abs{S_i}\ge\abs G/(2k)\ge\abs G/k^2$ for all $i\ge1$;
		and so $(S_1,\ldots,S_n)$ is a complete $(k,\abs G/k^2)$-blockade in $G$.
		This proves \cref{lem:cores}.
	\end{proof}
	The following simple probabilistic lemma will be useful in \cref{sec:1stround}.
	\begin{lemma}
		\label{lem:locate}
		Let $x\in(0,\frac12)$.
		Let $G$ be a bipartite graph with bipartition $(A,B)$ where every vertex in $B$ has at least $x\abs A$ neighbours in $A$.
		Then there exists $A'\subset A$ such that $\abs {A'}\le1/x$ and there are at least $\frac12\abs B$ vertices in $B$ with a neighbour in $A'$.
	\end{lemma}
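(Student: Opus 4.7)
The plan is a simple greedy covering argument: I will build $A'$ one vertex at a time, at each step choosing a vertex of $A$ that hits as many of the currently uncovered vertices of $B$ as possible, and use averaging to guarantee geometric progress.

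Explicitly, set $B^{(0)}=B$ and $A'_0=\emptyset$. Suppose $A'_i\subset A$ has been chosen, and let $B^{(i)}\subseteq B$ be the set of vertices with no neighbour in $A'_i$. Every vertex of $B^{(i)}$ has at least $x\abs A$ neighbours in $A$, and by definition none of them lies in $A'_i$, so each such vertex has at least $x\abs A$ neighbours in $A\setminus A'_i$. Thus the number of edges between $A\setminus A'_i$ and $B^{(i)}$ is at least $x\abs A\cdot\abs{B^{(i)}}$, and averaging over $A\setminus A'_i$ gives some $a_i\in A\setminus A'_i$ with at least $x\abs{B^{(i)}}$ neighbours in $B^{(i)}$. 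Setting $A'_{i+1}=A'_i\cup\{a_i\}$ then yields $\abs{B^{(i+1)}}\le(1-x)\abs{B^{(i)}}$.

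Take $k=\lfloor 1/x\rfloor$ and $A'=A'_k$; then $\abs{A'}\le 1/x$ and the number of vertices of $B$ with no neighbour in $A'$ is at most $(1-x)^k\abs B$. So it only remains to check the elementary inequality $(1-x)^{\lfloor 1/x\rfloor}\le\tfrac12$ for $x\in(0,\tfrac12)$: writing $x\in[1/(n+1),1/n)$ for an integer $n\ge2$ gives $(1-x)^{\lfloor1/x\rfloor}\le(n/(n+1))^n\le 4/9<\tfrac12$, since $(n/(n+1))^n$ is a decreasing sequence in $n$ that tends to $1/e$. There is no real obstacle here; the only alternative worth mentioning is the probabilistic route, in which each vertex of $A$ is included in $A'$ independently with probability $\ln 2/(x\abs A)$ and linearity of expectation is applied simultaneously to $\abs{A'}$ and to the number of uncovered vertices of $B$. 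I find the greedy version cleaner, so that is the plan.
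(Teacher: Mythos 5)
Your greedy argument is correct, and it is a genuinely different route from the paper's. The paper picks $k=\lfloor 1/x\rfloor$ vertices of $A$ uniformly and independently at random (with replacement), computes that a fixed vertex of $B$ is uncovered with probability at most $(1-x)^{\lfloor1/x\rfloor}\le\tfrac12$, and applies linearity of expectation; the key numerical step $(1-x)^{\lfloor1/x\rfloor}\le\tfrac12$ is the same in both proofs, and you verify it correctly (the case $x\in[1/(n+1),1/n)$ gives $\lfloor1/x\rfloor\ge n$, so your bound $(n/(n+1))^n\le 4/9$ covers all of $(0,\tfrac12)$). Your greedy version has the small advantage that it never needs the side remark ``we may assume $|A|\ge k$'': if $A$ is exhausted before $k$ steps, the uncovered set must already be empty, since each of its vertices has at least $x|A|>0$ neighbours in $A\setminus A'_i$. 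The paper's probabilistic version is one sentence shorter since it avoids tracking the shrinking set $A\setminus A'_i$, but the two are of essentially equal weight. (A minor note: the alternative ``probabilistic route'' you sketch at the end, with each vertex included independently with probability $\ln 2/(x|A|)$, is not the paper's argument either and would need a second-moment or union-bound step to control $|A'|$; the paper sidesteps this by sampling a fixed number of vertices.)
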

	\begin{proof}
		Let $k:=\floor{1/x}$; we may assume that $|A|\ge k$. Choose $s_1\LL s_k\in A$ uniformly and independently at random, and let $S=\{s_1\LL s_k\}$.
		For each $v\in B$, since $v$ has at least $x\abs A$ neighbours in $A$, the probability that none of $s_1\LL s_k$ is such a neighbour is at most
		\[\left(\frac{\abs A-x\abs A}{\abs A}\right)^k
		=(1-x)^{\floor{1/x}}.\]
		If $x> 1/3$, then $(1-x)^{\floor{1/x}}= (1-x)^2\le 4/9\le 1/2$. If $x\le 1/3$, then $x\floor{1/x}\ge 3/4$, and so
		\[(1-x)^{\floor{1/x}} \le e^{-x\floor{1/x}} \le e^{-3/4}\le 1/2.\]
		So, in either case, the expected number of vertices in $B$ with no neighbour in $S$ is at most $|B|/2$;
		and hence there is a choice of $A'\subseteq A$ with the desired property.
		This proves \cref{lem:locate}.
	\end{proof}
	For $\ell,w\ge0$ and a graph $G$,
	an {\em $(\ell,w)$-comb} in $G$ is a sequence of pairs $((a_i,B_i):i\in[\ell])$ where
	\begin{itemize}
		\item $(B_1,\ldots,B_{\ell})$ is an $(\ell,w)$-blockade in $G$;
		
		\item $a_1,\ldots,a_\ell$ are pairwise distinct, and $\{a_1,\ldots,a_\ell\},B_1,\ldots,B_\ell$ are pairwise disjoint subsets of $V(G)$; and
		
		\item for all distinct $i,j\in[\ell]$, $a_i$ is adjacent to every vertex of $B_i$ in $G$ and nonadjacent to every vertex of $B_j$ in $G$.
	\end{itemize}
	We call $a_1\LL a_\ell$ the {\em apexes} of the comb.
	
	To prove \cref{lem:robust0}, we need a special case of the \dd comb\ee{} lemma from~\cite{MR4563865}. 
	\begin{lemma}
		\label{lem:combs}
		Let $G$ be a graph and let $A,B\subseteq V(G)$ be nonempty and disjoint, such that
		each vertex in $A$ has at most $\Delta>0$ neighbours in $B$.
		Then either:
		\begin{itemize}
			\item at most $20\sqrt{\abs B\Delta}$ vertices in $B$ have a neighbour in $A$; or
			
			\item
			for some integer $k\ge1$, there is a $(k,\abs B/k^2)$-comb $((a_i,B_i):i\in[k])$ in $G$ where $a_i\in A$ and $B_i\subset B$ for all $i\in[k]$.
		\end{itemize} 
	\end{lemma}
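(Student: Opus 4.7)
The plan is to dichotomize on the size of $B^* := \{b \in B : N(b)\cap A \ne \emptyset\}$, the set of vertices of $B$ having a neighbour in $A$. If $|B^*| \le 20\sqrt{|B|\Delta}$, the first alternative of the lemma holds immediately; so I may assume $|B^*| > 20\sqrt{|B|\Delta}$ and aim to construct a comb.

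I would use an iterative greedy selection of apexes. Initialize $X_0 = \emptyset$, and at step $i \ge 1$ pick $a_i \in A\setminus\{a_1,\dots,a_{i-1}\}$ maximizing the ``fresh coverage'' $d_i := |N(a_i) \cap B \setminus X_{i-1}|$; set $X_i := X_{i-1}\cup (N(a_i)\cap B)$ and record the fresh set $F_i := N(a_i)\cap B\setminus X_{i-1}$. By construction the $F_i$ are pairwise disjoint subsets of $B^*$ with $|F_i| = d_i$, and the sequence $(d_i)$ is non-increasing because $X_{i-1}$ is monotone. I continue as long as $d_i$ stays above a threshold chosen of the order of the target width $|B|/k^2$.

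To extract a comb I would, for a chosen subset $S \subseteq [K]$ of the greedy apexes, define the candidate blocks $B_i^{\mathrm{comb}} := F_i \setminus \bigcup_{j \in S,\, j\ne i} N(a_j)$. By construction $F_i$ is already disjoint from $N(a_j)$ for $j<i$, so the ``loss'' comes only from later apexes, and a simple double-count bounds the total loss across all $i$ by $\sum_j (\Delta - d_j)$, i.e.\ the ``wasted capacity'' of the greedy steps. Picking $k \asymp \sqrt{|B|/\Delta}$ makes the target width $|B|/k^2 \asymp \Delta$, and the hypothesis $|B^*|>20\sqrt{|B|\Delta}$ guarantees at least $|B^*|/\Delta \ge 20\sqrt{|B|/\Delta} = 20k$ greedy steps, providing plenty of slack to absorb the losses.

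The main obstacle will be to enforce the width bound $|B_i^{\mathrm{comb}}| \ge |B|/k^2$ \emph{uniformly} over $i \in S$ rather than only on average. I would address this with a probabilistic argument, choosing $S$ at random (with each greedy apex included with a suitable probability) and using Markov's inequality to show that with positive probability no surviving block loses more than a constant fraction of its fresh size. A deterministic alternative is to iteratively discard greedy apexes whose fresh sets overlap too heavily with the union of later neighbourhoods, refining until a valid comb remains. Either way, careful parameter balancing is required, and it is precisely this balance that forces the specific constant $20$ and the width $|B|/k^2$ in the statement.
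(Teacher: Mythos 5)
The paper does not actually prove \cref{lem:combs}; it is quoted as ``a special case of the `comb' lemma from~\cite{MR4563865}'', so there is no internal proof to compare against. Evaluating your proposal on its own merits, the high-level plan (greedy apex selection, disjoint fresh sets $F_i$, double-counting the overlap) is a natural starting point, and you correctly identify the central difficulty: turning an \emph{average} control on the overlap into a \emph{uniform} lower bound on each block. But the two patches you suggest for that step do not close the gap, and this is where the argument really breaks.

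The double-count $\sum_{i<j}|F_i\cap N(a_j)|\le\sum_j(\Delta-d_j)$ is a bound on the column sums over $j$; it says nothing about the row sum $\sum_{j>i}|F_i\cap N(a_j)|$ for a fixed $i$, which is what governs how much $F_i$ shrinks. That row sum can be as large as $d_i\cdot(K-i)$: for example, it is consistent with the greedy (and with the degree cap) that every vertex of $F_1$ is adjacent to every later apex $a_2,\ldots,a_K$. In that case, for any $S$ containing $1$ and at least one later index, the block coming from $F_1$ is empty, so no Markov-type bound can rescue it, and a random $S$ with any non-trivial inclusion probability $p$ gives $\mathbb{E}[|B_1^{\mathrm{comb}}|\mid 1\in S]=\sum_{v\in F_1}(1-p)^{g(v)}$ where all $g(v)$ are of order $K$, i.e.\ essentially zero. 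The fact that the \emph{total} conflict count is $\le K\Delta$ does not prevent it from being concentrated entirely on the first few blocks. Your deterministic alternative (discard apexes whose fresh sets overlap heavily with later neighbourhoods) runs into the same obstruction: if you discard $a_1$ you have simply shifted the problem to the new first index, and nothing in your set-up certifies that only a bounded fraction of indices get discarded. One would need an additional structural idea here (for instance, controlling the multiset of degrees $g(v)=|\{j>f(v):v\in N(a_j)\}|$ over $v\in B^*$ and choosing $k$ at a scale where Jensen's inequality and the trade-off between $\sum d_i$ and $\sum(\Delta-d_j)$ both cooperate), and that idea is precisely what's missing.

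The parameter choice is also off in a way that matters. Taking $k\asymp\sqrt{|B|/\Delta}$ makes the target width $|B|/k^2\asymp\Delta$, but each $B_i$ is a subset of $N(a_i)\cap B$, which has size at most $\Delta$. So at that scale you are demanding that $B_i$ be essentially \emph{all} of $N(a_i)\cap B$, i.e.\ that the neighbourhoods of the chosen apexes be pairwise disjoint up to an $O(1)$ fraction, which is far stronger than what the greedy gives. The freedom in the lemma is that $k$ can be chosen freely (larger $k$ means smaller required width), and one needs to exploit that; your ``$20k$ greedy steps of slack'' gives room to throw away constantly many indices, but the loss per index is not $O(\Delta)$, as your double-count might suggest, so constant slack is not enough. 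In short: the skeleton is reasonable, but the uniformity step is a genuine gap, not a routine balancing of constants, and a correct proof needs a different mechanism for choosing which apexes and which vertices of $B$ to keep.
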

	The final ingredient we need is a well-known result for sparse $P_5$-free graphs~\cite{MR3343757}, a special case of \cref{pure1}.
	We include a short proof here for completeness.
	\begin{lemma}
		\label{lem:sparse}
		Let $\eta=2^{-5}$; then for
		every $\eta$-sparse $P_5$-free graph $G$ with $\abs G\ge2$, there is an anticomplete $(2,{\eta\abs G})$-blockade in $G$.
	\end{lemma}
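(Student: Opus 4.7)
Plan. First, I would reduce to the case where $G$ is connected. If $G$'s connected components can be partitioned into two groups of total size at least $\lfloor\eta|G|\rfloor$ each, we are done, since distinct components are automatically anticomplete. Otherwise some component $C$ contains more than $|G|-\lfloor\eta|G|\rfloor$ vertices, so its complement is too small to form one side of the blockade; we may then restrict attention to $G[C]$.

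Assuming $G$ is connected, the $P_5$-freeness implies that for any vertex $v$ the BFS layers $L_0=\{v\},L_1(v),L_2(v),L_3(v)$ partition $V(G)$: a vertex at distance $\ge 4$ from $v$, together with a shortest path back to $v$, would already be an induced $P_5$. Crucially, $(L_1(v),L_3(v))$ is automatically anticomplete. Thus, as a first attempt, I would try to locate a vertex $v$ (for instance one maximizing $|L_3(v)|$) such that both $|L_1(v)|$ and $|L_3(v)|$ are at least $\lfloor\eta|G|\rfloor$; whenever this succeeds we are done.

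The main obstacle is the remaining case, where no choice of $v$ yields two big layers simultaneously. Since the degree hypothesis already forces $|L_1(v)|\le\eta|G|$, the truly delicate scenario is $|L_3(v)|<\lfloor\eta|G|\rfloor$ for every $v$, which concentrates nearly all of $V(G)$ inside $L_2(v)$. Here I would use $P_5$-freeness more sharply: for two vertices $u,v$ at distance exactly $3$, any edge $ab$ with $a\in N(u)$ and $b\in N(v)$ forces $N(u)\setminus\{a\}\subseteq N(a)\cup N(b)$ and symmetrically for $N(v)$, because any other $a'\in N(u)$ outside $N(a)\cup N(b)\cup\{a\}$ would give the induced path $a'\,u\,a\,b\,v$, a $P_5$. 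This structural rigidity between distance-$3$ neighbourhoods, combined with the concentration in $L_2(v)$ and the numerical slack afforded by $\eta=2^{-5}$, should let me extract a second large anticomplete pair (for example by pulling out neighbourhoods of such a $u,v$ and their BFS complements, and checking that the various bounds of the form $(1-k\eta)|G|$ exceed $\lfloor\eta|G|\rfloor$ for small $k$).

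The hard part, as expected, is this last case: passing from the crude BFS information to the anticomplete pair when the layers $L_1(v)$ and $L_3(v)$ both fail to be large. The specific value $\eta=2^{-5}$ is presumably chosen so that the arithmetic in the resulting case analysis closes cleanly.
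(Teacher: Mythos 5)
Your proposal is an outline with an acknowledged gap, and the gap is exactly where the work lies. You observe that in a connected $P_5$-free graph every BFS from $v$ terminates by layer $L_3(v)$, and that $(L_1(v),L_3(v))$ is anticomplete; but this ``first attempt'' is essentially dead on arrival, because the sparsity hypothesis gives $\abs{L_1(v)}\le\eta\abs G$ while the target width is $\floor{\eta\abs G}$ --- so $L_1(v)$ is large enough only if $v$ has degree within one of the absolute maximum, which you cannot arrange. Everything therefore reduces to your ``delicate scenario'', and there you stop: you record a correct local fact about distance-$3$ pairs (that an edge $ab$ with $a\in N(u)$, $b\in N(v)$ forces $N(u)\setminus\{a\}\subseteq N(a)\cup N(b)$), but do not show how to turn it into an anticomplete pair of the required width. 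Note also that the distance-$3$ fact is vacuous when $G$ has diameter $\le 2$, a sub-case that is entirely consistent with all $L_3(v)$ being small and which your sketch does not address. As written, this is not a proof.

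The paper's argument is structurally quite different and does not look at BFS layers or distance-$3$ pairs at all. It argues by contradiction: assuming there is no anticomplete $(2,\floor{\eta\abs G})$-blockade, each application of \cref{lem:cores} (in the complement) forces the relevant induced subgraph to have a single dominant connected component. Iterating this gives a chain $F\supseteq J\supseteq J'$ of large connected induced subgraphs: $F$ a large component of $G$, $J$ a large component of $F\setminus(N(v)\cup\{v\})$ for a fixed $v\in F$, and $J'$ a large component of $J$ minus the $J$-neighbourhood of some $u\in N(v)$ with a neighbour in $J$. Sparsity bounds each of the deleted neighbourhoods and keeps the sizes well above $2\eta\abs G$ at each stage. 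Finally, some neighbour $w$ of $u$ in $J$ is mixed on the connected set $J'$ (it has a neighbour there by connectivity of $J$ and a nonneighbour because its degree is less than $\abs{J'}$), which yields an edge $zz'$ in $J'$ with $wz\in E$, $wz'\notin E$, and hence an induced $P_5$ on $v,u,w,z,z'$. So the ``hard part'' you defer is, in the paper's route, handled by a clean two-step peeling of neighbourhoods followed by the standard ``mixed on a connected set'' trick; there is no case analysis on BFS layers. If you want to salvage the BFS framing, you would essentially need to rediscover this peeling inside $L_2\cup L_3$, and at that point the layer structure is doing no work.
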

	\begin{proof}
		Let $G$ be $\eta$-sparse and $P_5$-free with $\abs G\ge2$; and suppose that there is no anticomplete $(2,{\eta\abs G})$-blockade in $G$.
		If $\abs G<\eta^{-1}$ then $G$ is edgeless (since it is $\eta$-sparse) and we are done by taking the endpoints of an arbitrary nonedge of $G$.
		Thus we may assume $\abs G\ge\eta^{-1}$.
		
		Now, by \cref{lem:cores} applied to $\overline G$ with $k=2$, $G$ has a connected component $F$ with $\abs F\ge\frac12\abs G$.
		Let $v\in V(F)$ and $A$ be the set of neighbours of $v$ in $F$; then $A\ne\emptyset$.
		Let $F':=F\setminus(A\cup\{v\})$.
		Since $G$ is $\eta$-sparse, we have $\abs{F'}\ge \abs F-\abs A-1\ge (\frac12-2\eta)\abs G\ge\frac13\abs G$; and therefore $\frac14\abs{F'}\ge \eta\abs G$,
		\cref{lem:cores} gives a connected component $J$ of $F'$ with $\abs J\ge\frac12\abs{F'}\ge\frac16\abs G$.
		Since $F$ is connected, there exists $u\in A$ with a neighbour if $V(J)$.
		Let $B$ be the set of vertices in $J$ adjacent to $u$ in $F$;
		then $1\le\abs B\le \eta\abs G$.
		Thus $\abs{J\setminus B}\ge \frac16\abs G-\eta\abs G\ge\frac18\abs G=4\eta\abs G$.
		Again, by \cref{lem:cores} with $k=2$,
		$J\setminus B$ has a connected component $J'$ with $\abs{J'}\ge\frac12\abs{J\setminus B}\ge 2\eta\abs G$.
		Since $J$ is connected, there exists $w\in B$ with a neighbour in $V(J')$.
		Because $w$ has degree at most $\eta\abs G<\abs{J'}$, $w$ also has a nonneighbour in $V(J')$.
		Hence the connectedness of $J'$ gives $z,z'\in V(J')$ with $wz\in E(J),wz'\nin E(J)$; but then $\{u,v,w,z,z'\}$ forms a copy of $P_5$ in $G$, a contradiction.
		This proves \cref{lem:sparse}.
	\end{proof}
	\section{Using a comb}
	\label{sec:1stround}
	We will obtain \cref{lem:robust0} as a consequence of the following:
	\begin{lemma}
		\label{lem:house40}
		There exists $d\ge40$ for which the following holds.
		For every $x\in(0,2^{-d})$ and every $\overline{P_5}$-free graph $G$ with $\abs G\ge x^{-d}$, there exist $k\in[2,1/x]$ and a pure or $x$-sparse $(k,\abs G/k^d)$-blockade in $G$.
	\end{lemma}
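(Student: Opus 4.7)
My first move is to dispose of the easy case via \cref{lem:cores}. If every anticonnected component of $G$ has fewer than $x\abs G$ vertices, then \cref{lem:cores} with $k=\floor{1/x}$ produces a complete (hence pure) $(k,\abs G/k^2)$-blockade, and its width $\abs G/k^2$ exceeds $\abs G/k^d$ whenever $d\ge 2$. So I may assume there is an anticonnected component $F$ of $G$ with $\abs F\ge x\abs G$; equivalently, $\overline G[F]$ is a connected $P_5$-free graph.

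Next I dichotomize on the maximum degree in $\overline G[F]$, with threshold $\eta:=2^{-5}$ as in \cref{lem:sparse}. If $\overline G[F]$ is $\eta$-sparse, I iterate \cref{lem:sparse} inside $\overline G[F]$ along the lines alluded to in the introduction (iterating such \dd pure pair\ee{} results lengthens the blockade at only a polynomial cost in the width), producing a complete $(\floor{1/x},x^{O(1)}\abs F)$-blockade in $G[F]$; for $d$ sufficiently large this is pure and meets the width requirement. Otherwise I pick $v\in F$ whose $G$-non-neighborhood $M\subseteq F$ has size at least $\eta\abs F$, and set $N:=F\setminus(M\cup\{v\})$.

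The next step invokes the comb lemma \cref{lem:combs} on the pair $(A,B)=(M,N)$ inside $G$, after first trimming from $M$ the few vertices of unusually high $N$-degree (which can be handled using \cref{lem:locate}). One branch of \cref{lem:combs} returns a large anticomplete pair $M_0\subseteq M,\ N_0\subseteq N$; I extend this to a pure blockade of the target length and width by further applying \cref{lem:cores} separately inside $G[M_0]$ and $G[N_0]$. The other branch returns a comb $((a_i,B_i):i\in[k])$ with apexes $a_i\in M$, bases $B_i\subseteq N$, and $k$ comparable to $1/x$. The decisive structural fact is: whenever $a_ia_j\in E(G)$, the pair $(B_i,B_j)$ must be anticomplete in $G$. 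Indeed any edge $bc$ with $b\in B_i,\ c\in B_j$ would together with $v,a_i,a_j$ induce a copy of $\overline{P_5}$ in $G$, since $\{v,b,c\}$ form the triangle of the house (as $v\sim b,c$ and $bc\in E$) while $b-a_i-a_j-c$ is the attached $P_4$ (using $a_i\sim b$, $a_ia_j\in E$, $a_j\sim c$, together with the comb anticompleteness $a_i\not\sim c,\ a_j\not\sim b$ and the non-neighborhood relations $v\not\sim a_i,\ v\not\sim a_j$).

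With that fact I would apply \cref{lem:cores} once more to the apex set $A=\{a_1\LL a_k\}$: if its anticonnected components are small, a complete sub-blockade of $A$ arises and the corresponding union of bases is an anticomplete (hence pure) blockade in $G$ of the required length and width. Otherwise a large anticomponent of $G[A]$ lets me iterate the whole argument on apex sub-sets. The crux, and the main obstacle I expect, is to make this iteration converge while keeping the length in $[2,1/x]$ and tracking the multiplicative width loss at each stage; in particular, the subtle borderline case is when iteration produces a stable-like apex set, in which the house-based edge-forcing of the preceding paragraph is vacuous and the $x$-sparseness of the output blockade must instead be certified from the per-vertex degree bound that was imposed on $M$ when invoking \cref{lem:combs}. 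It is this quantitative accounting that I expect to drive the bound $d\ge 40$.
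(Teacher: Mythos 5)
Your structural observation is a real one and checks out: if $a_i a_j\in E(G)$, any edge between $B_i$ and $B_j$ would close up a house with $v$, using the comb relations and the fact that $v$ is nonadjacent to $a_i,a_j$. But it is \emph{not} the observation the paper uses, and the rest of your plan has gaps that I do not think can be closed along the lines you sketch.

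The paper does not try to control edges between $B_i$ and $B_j$ via the apexes at all. Instead, it shrinks each $B_i$ to an anticonnected component $D_i$ (via \cref{lem:cores} applied \emph{inside} each block, not to the apex set), and then argues that a vertex $u\in D_j$ mixed on $D_i$ gives a house $\{v,u,w,z,a_i\}$ using anticonnectivity of $D_i$. This is unconditional: no hypothesis on $a_ia_j$ is needed, and the resulting blockade $(D_i)$ is pure outright. Your version hinges on the adjacency pattern inside the apex set, and you correctly flag that the stable-apex case is ``vacuous''; but your fallback --- that $x$-sparseness of $(B_i,B_j)$ could be ``certified from the per-vertex degree bound on $M$'' --- does not work, because the $B_i$ live in $N$, not in $M$: a degree bound from $M$ into $N$ says nothing about edges between $B_i$ and $B_j$. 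The proposed iteration on apex subsets is also unsubstantiated (no termination argument, no width accounting), whereas the paper's iterations are two precisely structured loops: \cref{lem:house2} (turning an $x$-sparse pair into an $x$-sparse blockade) and \cref{lem:house3} (iterative sparsification, walking $y$ down to $x$).

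There is a more basic problem upstream. You never pass to a sparse induced subgraph. Without a sparsity hypothesis, nothing controls how many $N$-neighbours a vertex of $M$ has, so \cref{lem:combs} cannot be applied with a useful $\Delta$; ``trimming the few vertices of unusually high $N$-degree'' need not remove only a few vertices in a dense graph, and \cref{lem:locate} is not a trimming lemma (it locates a small covering set, used in the paper to cap the length $k\le x^{-2}$, not to bound degrees). The paper resolves this by invoking R\"odl's theorem once to obtain a $\xi$-restricted induced subgraph $F$: if $\overline F$ is sparse, a single application of \cref{lem:sparse} gives a $(2,\cdot)$-blockade ($k=2$ already suffices, so there is no need to iterate \cref{lem:sparse}, which in fact does not iterate cleanly because sparsity is not preserved by passing to a block); if $F$ is sparse, the sparsification machine (\cref{lem:house1}--\cref{lem:house3}) takes over. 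In short: you are missing the R\"odl reduction that makes the comb applicable, the per-block anticonnected-component trick that makes the blockade pure unconditionally, and the controlled two-stage iteration that supplies the quantitative bounds.
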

	A crucial step of the proof of \cref{lem:house40} is the following lemma:
	
	\begin{lemma}
		\label{lem:house1}
		Let $x,y>0$ with $x\le y\le 2^{-8}$,
		and let $G$ be a $y^3$-sparse $\overline{P_5}$-free graph with $\abs G\ge y^{-4}$. Then either:
		\begin{itemize}
			\item $G$ is $2y^4$-sparse;
			
			\item there exist $k\in[y^{-1/4},1/x]$ and a pure $(k,\abs G/k^{26})$-blockade in $G$; or
			
			\item there are disjoint $X,Y\subset V(G)$ such that $\abs X\ge{y^4\abs G}$, $\abs Y\ge(1-4y)\abs G$, and $Y$ is $x$-sparse to~$X$.
		\end{itemize}
	\end{lemma}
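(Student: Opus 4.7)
The plan is to locate a high-degree vertex of $G$ and analyse the pattern of edges between its neighbourhood and the rest of the graph. I assume outcome 1 fails, so there exists $v\in V(G)$ of degree at least $2y^4\abs G$; I set $B:=N_G(v)$, so that $2y^4\abs G\le\abs B\le y^3\abs G$. Define $A$ to be the set of vertices of $V(G)\setminus(B\cup\{v\})$ with at most $y^{1/2}\abs B$ neighbours in $B$. Double-counting the edges between $B$ and its complement (each vertex of $B$ has at most $y^3\abs G$ such neighbours) shows that at most $O(y^{5/2})\abs G$ vertices fall outside $A$, so $\abs A\ge(1-2y)\abs G$. Crucially, every vertex of $A$ is nonadjacent to $v$.

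I then branch on the edge density between $A$ and $B$ from the $B$-side. If at least $\abs B/2$ vertices of $B$ have fewer than $x^2\abs A$ neighbours in $A$, let $X$ be this set. The edge density between $X$ and $A$ is at most $x^2$, so removing the at most $x\abs A$ vertices of $A$ with more than $x\abs X$ neighbours in $X$ leaves $Y\subseteq A$ with $\abs Y\ge(1-4y)\abs G$ such that $Y$ is $x$-sparse to $X$; together with $\abs X\ge\abs B/2\ge y^4\abs G$ this is outcome 3.

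Otherwise at least $\abs B/2$ vertices of $B$ each have at least $x^2\abs A$ neighbours in $A$. Apply \cref{lem:locate} to this bipartite graph to produce $S\subseteq A$ with $\abs S\le 1/x^2$ such that at least $\abs B/4$ vertices of $B$ have a neighbour in $S$. Now apply \cref{lem:combs} to the pair $(S,B)$ with $\Delta:=y^{1/2}\abs B$, valid because $S\subseteq A$. For $y\le 2^{-8}$, $20\sqrt{\Delta\abs B}=20y^{1/4}\abs B<\abs B/4$, so the first alternative of \cref{lem:combs} is impossible and we obtain a $(k,\abs B/k^2)$-comb $((a_i,B_i):i\in[k])$ with $a_i\in S$ and $B_i\subseteq B$. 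The inequality $\abs B/k^2\le y^{1/2}\abs B$ (since each $a_i\in A$ reaches all of $B_i$ using at most $y^{1/2}\abs B$ neighbours in $B$) forces $k\ge y^{-1/4}$, while $k\le\abs S\le 1/x^2$ gives the required upper bound on $k$, up to routine reparametrization of constants.

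Finally, I must purify this blockade to reach outcome 2. Iteratively apply \cref{lem:cores} to each $B_i$: either get an anticonnected component of $G[B_i]$ of size $\ge\abs{B_i}/T$ and replace $B_i$ by it, or get a complete $(T,\abs{B_i}/T^2)$-blockade in $G[B_i]$ to split $B_i$ further and recurse. Once every block of the resulting blockade is anticonnected, purity is forced by $\overline{P_5}$-freeness: any $u\in B_j$ mixed on $B_i$ yields (via an anti-path in $G[B_i]$) $w,w'\in B_i$ with $ww'\notin E(G)$, $uw\in E(G)$, $uw'\notin E(G)$; then $\{v,a_i,u,w,w'\}$ induces $\overline{P_5}$ (as in \cref{fig:comb}), a contradiction. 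The main obstacle---and the source of the exponent $26$ in the width $\abs G/k^{26}$---is tracking the length and width losses through this purification: each round of \cref{lem:cores} can multiply the length and shrink the width by powers of $k$, and one has to verify carefully that after all rounds the length still lies in $[y^{-1/4},1/x]$ and the width is at least $\abs G/k^{26}$.
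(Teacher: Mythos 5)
Your architecture is essentially the paper's---locate a high-degree vertex $v$, peel off the vertices that are dense to its neighbourhood $B$, split on the density from $B$ back into the clean set $A$, and then use \cref{lem:locate} and \cref{lem:combs} to build an upside-down comb that is purified via \cref{lem:cores} and $\overline{P_5}$-freeness. However, your choice of sparsity threshold for $A$ is too weak, and this causes a genuine breakdown, not a matter of constant bookkeeping.

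You define $A$ as the vertices with at most $y^{1/2}|B|$ neighbours in $B$, and accordingly apply \cref{lem:combs} with $\Delta=y^{1/2}|B|$. Then $20\sqrt{\Delta|B|}=20\,y^{1/4}|B|$, and you assert this is $<|B|/4$ when $y\le 2^{-8}$. But at $y=2^{-8}$ we have $y^{1/4}=1/4$, so $20\,y^{1/4}=5$, and $5|B|<|B|/4$ is false. For $20\,y^{1/4}<1/4$ you would need $y<80^{-4}\approx 2.4\cdot 10^{-8}$, far below $2^{-8}$. So the first alternative of \cref{lem:combs} cannot be ruled out, and the comb never appears. The paper avoids this by taking $A$ to be the vertices with at most $\tfrac12 y^2|N|$ neighbours in $N$ (so $A$ is $y^2$-sparse to $B$); this still leaves $|A|\ge(1-3y)|G|$ by the same double count, but now $\Delta=y^2|B|$ gives $20\sqrt{\Delta|B|}=20y|B|<\tfrac12|B|$ for $y<1/40$, which comfortably holds.

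The choice of $y^2$ is also what makes your ``routine reparametrization'' step actually work. With $A$ being $y^2$-sparse to $B$, the comb length $\ell$ satisfies $\ell\ge y^{-1}$ (not merely $y^{-1/4}$), and $\ell\le|S|\le x^{-2}$. The paper then sets $k:=\lceil\ell^{1/4}\rceil$; this fourth-root truncation simultaneously gives $k\ge y^{-1/4}$ and $k\le\sqrt{\ell}\le 1/x$, and the width is controlled because $|B|\ge y^4|G|\ge|G|/\ell^4\ge|G|/k^{16}$. In your version $\ell\ge y^{-1/4}$ has no slack: any root of $\ell$ drops below $y^{-1/4}$, so you cannot shrink $k$ from $x^{-2}$ to $1/x$ without losing the lower bound on $k$. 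Shrinking $x^{-2}$ to $x^{-1}$ is a squaring, not a constant, and you have not shown how to absorb it.

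Finally, the purification is simpler than you anticipate, and the exponent $26$ is not the result of an iteration. The paper applies \cref{lem:cores} once to each $G[B_i]$: if some $B_i$ fails to have an anticonnected component of size $\ge|B_i|/k$, that single $B_i$ already contains a complete $(k,|B_i|/k^2)$-blockade and we are done immediately. Otherwise every $B_i$ has a large anticonnected component $D_i$, and the apex argument applies to $(D_1,\ldots,D_k)$ directly. There is no recursion to track; $26=16+10$, from $|B|\ge|G|/k^{16}$ and $|D_i|\ge|B_i|/k^2\ge|B|/k^{10}$. Your proposed recursion would also need care, because once a block is split inside some $B_i$, the pieces share the apex $a_i$ and the apex argument no longer distinguishes them---so the recursion is not only unnecessary but introduces a complication the paper sidesteps.
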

	
	Let us sketch the proof of this result.
	We are given sufficiently small positive variables $x\le y$ and a $y$-sparse $\overline{P_5}$-free graph $G$.
	If $G$ is actually $y/2$-sparse then $G$ is already (much) sparser than what we knew about it and so the first outcome of \cref{lem:house1} holds.
	Thus, let us assume that there is a vertex $v$ of degree at least $(y/2)\abs G$ in $G$.
	Our plan now is either to extract a pure blockade with appropriate length and width from the neighbourhood of $v$ (the second outcome of \cref{lem:house1}), or to conclude that a significant portion of its nonneighbourhood is $x$-sparse to a decent portion of its neighbourhood (the third outcome of \cref{lem:house1}).
	
	To carry this out, we will first apply \cref{lem:combs} to obtain a comb between the neighbourhood $B$ of $v$ and the rest of the graph;
	but instead of taking a comb with apexes in $B$ that expands into the rest of $G$ 
	(as was done in~\cite{MR4563865}),
	we will build an \dd upside-down\ee{} comb $((a_i,B_i):i\in[k])$ (for some $k\ge1$), with apexes in $V(G)\setminus B$
	that goes from the rest of $G$ back into $B$ (in other words, $v$ is nonadjacent to $a_1,\ldots,a_k$ and adjacent to every vertex in $B_1\cup\cdots\cup B_k$; see \cref{fig:comb}).
	Such a comb is potentially useful, because if we can arrange for every $G[B_i]$ to be anticonnected (\cref{lem:cores}), then the blockade $\mac B=(B_1,\ldots,B_k)$ has to be pure: whenever there is a vertex from some $B_j$ mixed on another block $B_i$, the anticonnectivity of $G[B_i]$ would then give a copy of the house $\overline{P_5}$ in $G$ that contains $v$ and $a_i$ (see \cref{fig:comb}).

	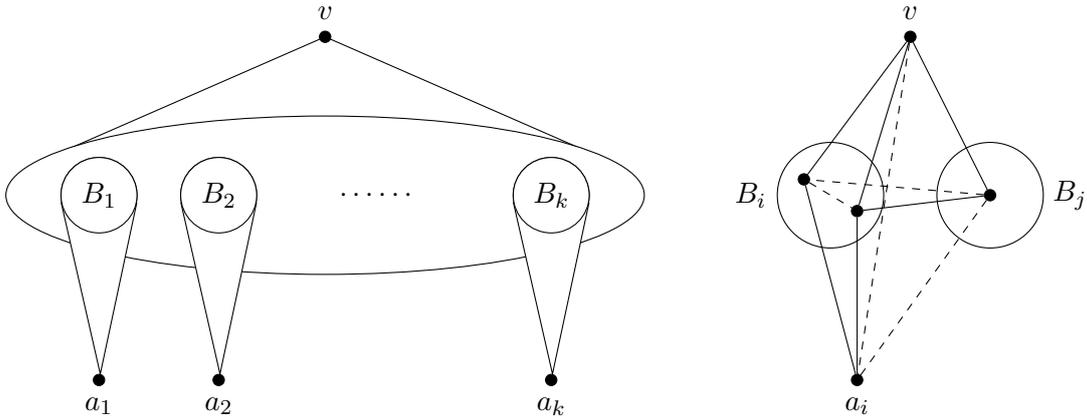
\begin{figure}[ht]
		\centering
		
		\begin{tikzpicture}[scale=0.7,auto=left]
			\filldraw[fill=white]
			(0,3) -- (-5.45,0.6) -- (5.45,0.6) -- cycle;
			\draw [fill=white] (0,0) ellipse (6cm and 1.5cm);
			\node[inner sep=1.5pt, fill=black,circle,draw] (v) at ({0},{3}) {};
			\node[above=0.1cm] at (v) {$v$};
			
			\node[] (b1) at (-4.25,0) {$B_1$};
			\node[inner sep=1.5pt, fill=black,circle,draw] (a1) at ({-4.25},{-3.5}) {};
			\node [circle,draw,name path=circle,fill=white] (c1) at (b1) [minimum size=1cm] {};
			\filldraw[fill=white] (a1)  
			-- (tangent cs:node=c1,point={(a1)},solution=1) 
			coordinate[] () 
			-- (tangent cs:node=c1,point={(a1)},solution=2)
			coordinate[] () 
			-- cycle;
			\node [circle,draw,name path=circle,fill=white] (c1) at (b1) [minimum size=1cm] {};
			\node[] (b1) at (-4.25,0) {$B_1$};
			\node[below=0.1cm] at (a1) {$a_1$};
			
			
			\node[] (b2) at (-2,0) {$B_2$};
			\node[inner sep=1.5pt, fill=black,circle,draw] (a2) at ({-2},{-3.5}) {};
			\node [circle,draw,name path=circle,fill=white] (c2) at (b2) [minimum size=1cm] {};
			\filldraw[fill=white] (a2)  
			-- (tangent cs:node=c2,point={(a2)},solution=1) 
			coordinate[] () 
			-- (tangent cs:node=c2,point={(a2)},solution=2)
			coordinate[] () 
			-- cycle;
			\node [circle,draw,name path=circle,fill=white] () at (b2) [minimum size=1cm] {};
			\node[] (b2) at (-2,0) {$B_2$};
			\node[below=0.1cm] at (a2) {$a_2$};
			
			\node[] (bk) at (4.25,0) {};
			\node[inner sep=1.5pt, fill=black,circle,draw] (ak) at ({4.25},{-3.5}) {};
			\node [circle,draw,name path=circle,fill=white] (ck) at (bk) [minimum size=1cm] {};
			\filldraw[fill=white] (ak)  
			-- (tangent cs:node=ck,point={(ak)},solution=1) 
			coordinate[] () 
			-- (tangent cs:node=ck,point={(ak)},solution=2)
			coordinate[] () 
			-- cycle;
			\node [circle,draw,name path=circle,fill=white] (ck) at (bk) [minimum size=1cm] {};
			\node[] at (bk) {$B_k$};
			\node[below=0.1cm] at (ak) {$a_k$};
			
			\node[] at (1,0) {$\cdots\cdots$};
			
			\draw [fill=white] (9.5,0) circle (1cm);
			\node[inner sep=1.5pt, fill=black,circle,draw] (a1) at ({10},{-3.5}) {};
			
			\draw [fill=white] (12.5,0) circle (1cm);
			
			\node[below=0.1cm] at (a1) {$a_i$};
			\node[] at (8,-0) {$B_i$};
			\node[] at (14,0) {$B_j$};
			
			\node[inner sep=1.5pt, fill=black,circle,draw] (v) at ({11},{3}) {};
			\node[above=0.1cm] at (v) {$v$};
			
			\node[inner sep=1.5pt, fill=black,circle,draw] (v1) at ({9},{0.3}) {};
			\node[inner sep=1.5pt, fill=black,circle,draw] (v2) at ({10},{-0.3}) {};
			\node[inner sep=1.5pt, fill=black,circle,draw] (v3) at ({12.5},{0}) {};
			\draw[-] (v) -- (v3) -- (v2) -- (a1) -- (v1) -- (v) -- (v2);
			\draw[dashed] (v) -- (a1) -- (v3) -- (v1) --  (v2);
		\end{tikzpicture}
		
		\caption{Making a house from an upside-down comb with anticonnected blocks.}
		\label{fig:comb}
	\end{figure}
	
	Thus, $\mac B$ is pure; but to satisfy the lemma, it must have the right length and width. First, 
	we need its width to be at least $\poly(1/k)|G|$ where $k$ is its length. 
	The blocks $B_1,\ldots,B_k$ are subsets of $B$; and the application of \cref{lem:combs} tells us that $\mac B$ is a 
	$(k,\abs B/O(k^2))$-blockade in $G[B]$,
	and so a $(k,(y/2)\abs G/O(k^2))$-blockade in $G$, but it gives us no lower bound on $k$.
	To ensure that the width of $\mac B$ is at least $\poly(1/k)|G|$, we need $k$ to be at least some small power of 
	$y^{-1}$. But we can arrange this as follows. 
	Let us choose the comb so that it contains no vertices outside $B$ that see at least a $y^{1/2}$ fraction of $B$. 
	There are not many such vertices (at most $O(y^{1/2})\abs G$), because $\abs B\ge y\abs G$ and everyone in $B$ sees at most $y\abs G$ vertices outside.
	In other words, by letting $A$ be the set of vertices with at most $y^{1/2}\abs B$ neighbours in $B$, we have $\abs A\ge (1-O(y^{1/2}))\abs G$;
	so let us choose the comb with every apex $a_i$ in $A$. Then the width of the comb is at least $\abs B/O(k^2)$ and at most
	$y^{1/2}\abs B$, and this ensures that $k\ge \Omega(y^{-1/4})$,  as we wanted. 
	Consequently we can arrange that $\mac B$ is a pure $(k,\abs G/O(k^{6}))$-comb in $G$.
	
	Another thing we need, for $\mac B$ to satisfy the lemma, is a good {\em upper} bound on its length $k$.
	We can arrange that
	$k\le \poly(1/x)$ (or another good thing happens), by putting a further restriction on how we choose the comb.
	Indeed, given $A,B$ as above, if there are too many vertices of $B$ (at least half, say) seeing fewer than $x^2\abs A$ vertices 
	in $A$, then it is easy to obtain subsets $A'\subseteq A$, $B'\subseteq B$ with $\abs {A'}\ge(1-O(x))\abs A\ge(1-O(y^{1/2}))\abs G$ and $\abs{B'}\ge\frac12\abs B\ge \Omega(y)\abs G$ such that $A'$ is $x$-sparse to $B'$;
	and this satisfies the third outcome of \cref{lem:house1}.
	So we may assume there are at least $\frac12\abs B$ vertices of $B$ with at least $x^2\abs A$ neighbours in $A$;
	and then \cref{lem:locate} gives us some subset $S$ of $A$ of size at most $x^{-2}$ that \dd covers\ee{} a constant fraction of $B$.
	By \cref{lem:combs}, the apexes $a_1,\ldots,a_k$ of the comb can be taken from $S$, and so $k\le x^{-2}$ as a consequence.
	
	That was a sketch of the proof of \cref{lem:house1}.
	Next we will write it out, with cosmetic adjustments in the constant factors and exponents.
	\begin{proof}
		[Proof of \cref{lem:house1}]
		Assume that the first and third outcomes do not hold.
		Since the first outcome does not hold, $G$ has a vertex $v$ of degree at least $2y^4\abs G$. Let $N$ be its set of neighbours.
		
		\begin{claim}
			\label{claim:house1}
			There exist $A\subseteq V(G)\setminus (N\cup \{v\})$ and $B\subseteq N$ such that
			\begin{itemize}
				\item $\abs B\ge y^4\abs G$ and $\abs A\ge (1-3y)\abs G$; and
				
				\item $A$ is $y^2$-sparse to $B$ and every vertex in $B$ has at least $x^2\abs A$ neighbours in $A$.
			\end{itemize}
		\end{claim}
		
		\begin{subproof}
			We have $\abs N\ge 2y^4\abs G$.
			Let $A'$ be the set of vertices in $V(G)\setminus(N\cup\{v\})$ with at least $\frac12y^2\abs N$ neighbours in $N$.
			By averaging, there is a vertex in $N$ with at least $\frac12y^2\abs{A'}$ neighbours in ${A'}$;
			and so $\frac12y^2\abs{A'}\le y^3\abs G$ since $G$ is $y^3$-sparse,  which yields that $\abs{A'}\le 2y\abs G$.
			Let $A:=V(G)\setminus(N\cup A'\cup\{v\})$; then since $1+y^2\abs G\le y\abs G$, we have
			\[\abs A\ge \abs G-(1+y^3\abs G+2y\abs G)
			\ge (1-3y)\abs G.\]
			Let $N'$ be the set of vertices in $N$ with at most $x^2\abs A$ neighbours in $A$, and let $B:=N\setminus N'$.
			There are at most $x\abs A$ vertices in $A$ with more than $x\abs{N'}$ neighbours in $N'$, since there are at most $x^2|A|\cdot |N'|$ edges between $A$ and $N'$; so there are at least
			\[\abs A-x\abs A\ge (1-3y)\abs G-x\abs G\ge (1-4y)\abs G\]
			vertices in $A$ with at most $x\abs{N'}$ neighbours in $N'$.
			Thus, $\abs {N'}\le y^4\abs G\le \frac12\abs N$, since the third outcome of the lemma does not hold, and so
			\[\abs B=\abs N-\abs{N'}\ge \textstyle\frac12\abs N\ge y^4\abs G.\]
			Since $A$ is $\frac12y^2$-sparse to $N$, it is $y^2$-sparse to $B$.
			This proves \cref{claim:house1}.
		\end{subproof}
		
		Let $A,B$ be given by \cref{claim:house1};
		then \cref{lem:locate} (with $x^2$ in place of $x$) gives $S\subset A$ with $\abs S\le x^{-2}$ such that there are 
		at least $\frac12\abs B$ vertices in $B$ with a neighbour in $S$.
		Let $\Delta:=y^2\abs B$.
		Since $y<\frac1{40}$, more than 
		$20\sqrt{\abs B\Delta}=20y\abs B$ vertices in $B$ have a neighbour in $S$. So by 
		\cref{lem:combs},
		for some integer $\ell\ge1$, there is an $(\ell,\abs B/\ell^2)$-comb $((a_i,B_i):i\in[\ell])$ in $G$ where $a_i\in S$ and $B_i\subset B$ for all $i\in[\ell]$.
		
		Since $A$ is $y^2$-sparse to $B$, $\abs B/\ell^2\le y^2\abs B$ and so $\ell\in [y^{-1},x^{-2}]$.
		Let $k:=\ceil{\ell^{1/4}}\in[y^{-1/4},1/x]$; then $\abs B\ge y^4\abs G\ge \abs G/\ell^4
		\ge\abs G/k^{16}$ and $(B_1,\ldots,B_{k})$ is a $(k,\abs B/k^{8})$-blockade
		(note that $k\le\sqrt\ell\le x^{-1/2}$).
		Let $I:=[k]$.
		\begin{claim}
			\label{claim:pure}
			There is a pure $(k,\abs B/k^{10})$-blockade in $G[B]$.
		\end{claim}
		\begin{subproof}
			For each $i\in I$, if $G[B_i]$ has no anticonnected component of size at least $\abs{B_i}/k$, then \cref{lem:cores} gives a complete $( k,\abs{B_i}/k^2)$-blockade in $G[B_i]$ (note that $k\ge y^{-1/4}\ge 4$);
			and this satisfies the claim since $\abs{B_i}/k^2\ge \abs B/k^{10}$.
			Hence, we may assume each $G[B_i]$ has an anticonnected component $D_i$ with 
			\[\abs{D_i}\ge\abs{B_i}/k^2\ge\abs B/k^{10}.\]
			
			For distinct $i,j\in I$, if there exists some $u\in D_j$ mixed on $D_i$, then $u$ would have a neighbour $w\in D_i$ and a nonneighbour $z\in D_i$ such that $wz\nin E(G)$ since $D_i$ is anticonnected; and so $\{v,u,w,z,a_i\}$ would form a copy of $\overline{P_5}$ in $G$ (see \cref{fig:comb}), a contradiction.
			Thus $(D_i:i\in I)$ is a pure blockade in $G[B]$ of length $k$ and width at least $\abs B/k^{10}$.
			This proves \cref{claim:pure}.
		\end{subproof}
		
		Since $\abs B/k^{10}\ge \abs G/k^{26}$, \cref{claim:pure} gives a pure $(k,\abs G/k^{26})$-blockade in $G$, which is the second outcome of the lemma.
		This proves \cref{lem:house1}.
	\end{proof}
	The rest of this section deals with the proof of \cref{lem:house40}.
	We first iterate \cref{lem:house1} to turn its third outcome (an $x$-sparse pair) into an $x$-sparse blockade outcome, as follows.
	\begin{lemma}
		\label{lem:house2}
		Let $c:=2^{-8}$.
		Let $x,y>0$ with $x\le y\le c$,
		and let $G$ be a $cy^3$-sparse $\overline{P_5}$-free graph with $\abs G\ge y^{-6}$. Then~either:
		\begin{itemize}
			\item there exists $S\subset V(G)$ such that $\abs S\ge c\abs G$ and $G[S]$ is $2y^4$-sparse;
			
			\item there exist $k\in[y^{-1/4},1/x]$ and a pure $(k,\abs G/k^{30})$-blockade in $G$; or
			
			\item there is an $x$-sparse $(y^{-1},{y^6\abs G})$-blockade in $G$.
		\end{itemize}
	\end{lemma}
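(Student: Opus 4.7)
The plan is to iterate \cref{lem:house1} on successively smaller induced subgraphs, collecting the $X$-sides of repeated third outcomes into the desired $x$-sparse blockade. Set $V_0:=V(G)$, and for $i\ge1$ attempt to apply \cref{lem:house1} (with the same parameters $x,y$) to $G[V_{i-1}]$. Throughout, we maintain the invariant $\abs{V_{i-1}}\ge c\abs G$: since $G$ is $cy^3$-sparse, this guarantees $G[V_{i-1}]$ has maximum degree at most $cy^3\abs G\le y^3\abs{V_{i-1}}$, so the sparsity hypothesis of \cref{lem:house1} is met.

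If at some step $i$ we obtain the first outcome of \cref{lem:house1}, then $G[V_{i-1}]$ is $2y^4$-sparse on at least $c\abs G$ vertices, so $S:=V_{i-1}$ gives the first outcome of \cref{lem:house2}. If we obtain the second outcome, we get a pure $(k,\abs{V_{i-1}}/k^{26})$-blockade with $k\in[y^{-1/4},1/x]$; since $y\le c=2^{-8}$, we have $k\ge y^{-1/4}\ge c^{-1/4}=4$, hence $c\ge1/k^4$, and the width is at least $c\abs G/k^{26}\ge\abs G/k^{30}$, giving the second outcome. Otherwise we are in the third outcome, producing disjoint $X_i,Y_i\subset V_{i-1}$ with $\abs{X_i}\ge\floor{y^4\abs{V_{i-1}}}$, $\abs{Y_i}\ge(1-4y)\abs{V_{i-1}}$, and $Y_i$ $x$-sparse to $X_i$; we set $V_i:=Y_i$, record the block $X_i$, and continue.

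Iterate until one of the first two outcomes occurs, or $T:=\ceil{y^{-1}}$ rounds have been completed. In the latter case, the estimate $(1-4y)^{1/y}\ge e^{-4/(1-4y)}\ge e^{-5}>2^{-8}=c$ (valid for $y\le 2^{-8}$) verifies the maintained invariant $\abs{V_{i-1}}\ge(1-4y)^{i-1}\abs G\ge c\abs G$ for all $i\le T$. The recorded blocks $(X_1,\ldots,X_T)$ then form a blockade of length $T\ge y^{-1}$; each has size at least $\floor{y^4\abs{V_{i-1}}}\ge\floor{y^4\cdot c\abs G}\ge\floor{y^6\abs G}$, using $c\ge y^2$ (which holds since $y\le c\le 1$, so $y^2\le y\le c$). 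Finally, for $i<j$ we have $X_j\subset V_{j-1}\subset Y_i$, and $Y_i$ is $x$-sparse to $X_i$, so $X_j$ is $x$-sparse to $X_i$; hence the blockade is $x$-sparse, giving the third outcome of \cref{lem:house2}.

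The only real obstacle is the constant juggling around $c=2^{-8}$: the same constant has to be large enough to absorb the multiplicative loss $(1-4y)^{1/y}$ after $\Theta(y^{-1})$ iterations (so that the sparsity hypothesis of \cref{lem:house1} keeps holding on $G[V_{i-1}]$), yet small enough that $c\ge y^2$, so that the width $y^4\abs{V_{i-1}}$ provided by \cref{lem:house1} translates into the required $y^6\abs G$. Both requirements pin $c$ into a narrow window that the stated value comfortably satisfies, and all other steps are essentially bookkeeping.
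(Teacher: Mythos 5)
Your proof is correct and takes essentially the same approach as the paper's own proof, which likewise iterates \cref{lem:house1} on shrinking $Y$-sides, collects the recorded $X$-sides into an $x$-sparse blockade, and uses the same $(1-4y)^{1/y}$-type bound to keep the residual set at size at least $c\abs G$ throughout. The paper phrases the iteration as an extremal choice (a maximal $x$-sparse blockade $(B_0,\ldots,B_n)$) rather than an explicit loop capped at $T=\ceil{y^{-1}}$ steps, but the arithmetic and case analysis are the same.
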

	\begin{proof}
		Suppose that none of the outcomes holds.
		Thus there exists $n\ge0$ maximal such that there is an $x$-sparse blockade $(B_0,B_1,\ldots,B_n)$ with $\abs{B_{i-1}}\ge{y^6\abs G}$ for all $i\in[n]$ and $\abs{B_n}\ge(1-4y)^n\abs G$.
		Since the third outcome does not hold, $n<y^{-1}$; and so by the inequality $1-t\ge4^{-t}$ for all $t\in[0,\frac12]$, we have
		\[\abs{B_n}\ge (1-4y)^n\abs G
		\ge 4^{-4yn}\abs G>4^{-4}\abs G=c\abs G
		\ge y\abs G\ge x\abs G.\]
		Hence $G[B_n]$ has maximum degree at most $cy^3\abs G<y^3\abs{B_n}$; and since the first outcome does not hold,
		$G[B_n]$ is not $2y^4$-sparse.
		Therefore, by \cref{lem:house1}, either:
		\begin{itemize}
			\item there exist $k\in[y^{-1/4},1/x]$ and a pure $(k,\abs{B_n}/k^{26})$-blockade in $G$; or
			
			\item there are disjoint $X,Y\subset B_n$ such that $\abs X\ge{y^5\abs{B_n}}$, $\abs Y\ge(1-4y)\abs{B_n}$, and $Y$ is $x$-sparse to $X$.
		\end{itemize}
		
		The first bullet cannot hold since $\abs{B_n}/k^{26}\ge y\abs G/k^{26}\ge \abs G/k^{30}$ and the second outcome of the lemma does not hold.
		Thus the second bullet holds; but then $(B_0,B_1,\ldots,B_{n-1},X,Y)$ would contradict the maximality of $n$ since $\abs X\ge{y^5\abs{B_n}}\ge{y^6\abs G}$.
		This proves \cref{lem:house2}.
	\end{proof}
	The next result contains the ``iterative sparsification'' step of the proof. It allows us to replace
	the $cy^3$-sparsity hypothesis of \cref{lem:house2} with a ``sparsity a small constant'' hypothesis and still
	deduce (essentially) the same conclusion.
	\begin{lemma}
		\label{lem:house3}
		Let $c:=2^{-8}$. Let $x\in(0,c^5)$, and let $G$ be a $c^{16}$-sparse $\overline{P_5}$-free graph with $\abs G\ge x^{-7}$.
		Then either:
		\begin{itemize}
			\item for some $k\in[1/c,1/x]$, there is a pure $(k,\abs G/k^{34})$-blockade in $G$; or
			
			\item for some $y\in[x,c^5]$, there is an $x$-sparse $(y^{-1},{y^7\abs G})$-blockade in $G$.
		\end{itemize}
	\end{lemma}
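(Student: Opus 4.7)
The idea is to iterate Lemma~\ref{lem:house2}, progressively sparsifying $G$, and, if the iteration runs out of room, to finish with a direct uniform-partition construction. Define $y_0 := c^5$ and, for $i \ge 0$, $y_{i+1} := (2y_i^4/c)^{1/3}$; this ensures that any $2y_i^4$-sparse graph is automatically $cy_{i+1}^3$-sparse. A routine induction on $i$ (base case $y_0 = c^5$; step uses $2^{1/3} \le 2^{8(i+1)/3}$) shows that $y_i \le c^{i+5}$ for all $i \ge 0$. Let $n$ be the smallest nonnegative integer with $y_n < x$; since $y_0 = c^5 > x$ we have $n \ge 1$, and the key numerical consequence is $x \le y_{n-1} \le c^{n+4}$.

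I will construct a nested sequence $G = G_0 \supseteq G_1 \supseteq \cdots$ with $|G_i| \ge c^i|G|$ and each $G_i$ being $cy_i^3$-sparse. At each step $i \in \{0,1,\ldots,n\}$, apply Lemma~\ref{lem:house2} to $G_i$ with parameters $x$ and $y_i' := y_i$ when $i < n$, respectively $y_i' := x$ when $i = n$; in both cases $y_i' \le c^{i+4}$ and $G_i$ is $c(y_i')^3$-sparse. If the first outcome occurs, set $G_{i+1} := G_i[S]$ and continue. Otherwise the conversions are immediate: for the second outcome (a pure $(k,|G_i|/k^{30})$-blockade with $k \ge (y_i')^{-1/4}$), the bound $y_i' \le c^{i+4}$ gives $k \ge c^{-(i+4)/4} \ge 1/c$ and $k^4 \ge c^{-i}$, so the width is $\ge c^i|G|/k^{30} \ge |G|/k^{34}$, which is the first outcome of Lemma~\ref{lem:house3}; for the third outcome (an $x$-sparse $((y_i')^{-1},\lfloor (y_i')^6|G_i|\rfloor)$-blockade), taking $y := y_i' \in [x,c^5]$ and using $y \le c^i$ gives width $\ge y^7|G|$, the second outcome.

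If the first outcome of Lemma~\ref{lem:house2} occurs at every step $i = 0,1,\ldots,n$, we are left with $G_{n+1}$ which is $2x^4$-sparse and satisfies $|G_{n+1}| \ge c^{n+1}|G|$. In this leftover case I construct the $x$-sparse blockade directly via uniform partition: set $y^* := c^{\max(5,\,(n+2)/6)}$ and partition $V(G_{n+1})$ into $m := \lceil (y^*)^{-1}\rceil$ parts of near-equal size $B_1,\ldots,B_m$. The choice of $y^*$ ensures $y^* \in [x,c^5]$, $(y^*)^6 \le c^{n+2}$, and $y^* \ge 8x^3$ (all verifiable case-by-case using $1/c = 256$). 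From $c^{n+1} \ge 2(y^*)^6$ (immediate since $1/c \ge 2$), each part has size at least $y^*|G_{n+1}|/2 \ge (y^*)^7|G|$; the $2x^4$-sparsity of $G_{n+1}$ combined with $y^* \ge 8x^3$ then guarantees that every vertex in $B_j$ has at most $2x^4|G_{n+1}| \le x|B_i|$ neighbours in any other part $B_i$, so the partition is $x$-sparse, yielding the second outcome of Lemma~\ref{lem:house3}.

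The main obstacle is juggling the constants so that the slacks line up: the jump in exponents from $k^{-30}$ and $y^6$ in Lemma~\ref{lem:house2} to $k^{-34}$ and $y^7$ in Lemma~\ref{lem:house3} must absorb the factor $c^i$ lost at each iteration, and the two-subcase choice of $y^*$ in the leftover step must simultaneously sit in $[x,c^5]$, dominate $8x^3$, and satisfy $(y^*)^6 \le c^{n+1}/2$. The inductive bound $y_i \le c^{i+5}$---in particular its consequence $y_{n-1} \le c^{n+4}$---is the critical numerical fact that makes all these inequalities fit.
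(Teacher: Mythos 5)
Your proof is correct and takes essentially the same approach as the paper: iteratively sparsify via \cref{lem:house2}, converting its second and third outcomes into the two outcomes of \cref{lem:house3} along the way, and finish with a direct uniform partition in the bottom case. The paper compresses the iteration into a cleaner ``minimal $y$'' argument (choosing $y$ minimal in $[cx,c^5]$ with a $cy^3$-sparse subgraph of size $\ge y\abs G$, so that the first outcome of \cref{lem:house2} contradicts minimality via $2y^4\le c(cy)^3$), whereas you unroll it into an explicit sequence $y_0,y_1,\ldots$ with the recursion $y_{i+1}=(2y_i^4/c)^{1/3}$ and the size invariant $\abs{G_i}\ge c^i\abs G$; both bookkeeping schemes line up consistently since your bound $y_i\le c^{i+5}$ gives $c^i\ge y_i$, matching the paper's $\abs F\ge y\abs G$ tracking. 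The only loose ends in your write-up are the usual integer-rounding issues in the final uniform-partition step (the guarantee ``each part has size at least $y^*\abs{G_{n+1}}/2$'' needs $\abs{G_{n+1}}/m\ge 2$, and the degenerate case $(y^*)^7\abs G<1$ should be dispatched by an all-empty blockade); the paper's own version of this step is similarly terse, so these are cosmetic.
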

	\begin{proof}
		Suppose that neither of the two outcomes holds.
		Let $y\in[cx,c^5]$ be minimal such that $G$ has a $cy^3$-sparse induced subgraph $F$ with $\abs F\ge y\abs G$. (This is possible, since taking $y=c^5$ has the property.)
		Suppose that $y<x$; then $F$ is $x^3$-sparse with $\abs F\ge y\abs G\ge cx\abs G\ge x^2\abs G\ge x^{-5}$.
		Because $\ceil{x^{-1}}\cdot\ceil{\frac14x\abs F}\le 2x^{-1}\cdot \frac12x\abs F=\abs F$,
		there is an $(x^{-1},\frac14x\abs F)$-blockade in $F$,
		which is then $x$-sparse since $\frac14x\ge x^2$.
		Thus, since $\frac14x\abs F\ge \frac14 cx^2\abs G\ge x^3\abs G$, this would be an $x$-sparse $(x^{-1},x^3\abs G)$-blockade in $G$, a contradiction.
		
		Consequently $y\ge x$. 
		By \cref{lem:house2} applied to $F$, either:
		\begin{itemize}
			\item $F$ has a $2y^4$-sparse induced subgraph with at least $c\abs F\ge cy\abs G$ vertices;
			
			\item there exist $k\in[y^{-1/4},1/x]\subset[1/c,1/x]$ and a pure $(k,\abs F/k^{30})$-blockade in $F$; or
			
			\item there is an $x$-sparse $(y^{-1},{y^6\abs F})$-blockade in $F$.
		\end{itemize}
		The first bullet would give a $2y^4$-sparse induced subgraph of $F$ (and so of $G$) with at least $cy\abs G$ vertices,
		which contradicts the minimality of $y$ since $2y^4\le c^4y^3= c(cy)^3$.
		If the second bullet holds, then since $\abs F/k^{30}\ge y\abs G/k^{30}\ge \abs G/k^{34}$, there would be a pure $(k,\abs G/k^{34})$-blockade in $G$, a contradiction.
		If the third bullet holds, then since $y^6\abs F\ge y^7\abs G$,
		there would be an $x$-sparse $(y^{-1},{y^7\abs G})$-blockade in $G$, a contradiction.
		This proves \cref{lem:house4}.
	\end{proof}
	Next, by applying  R\"odl's \cref{thm:rodl}, we remove the sparsity hypothesis in \cref{lem:house3} completely, and prove \cref{lem:house40}, which we restate:
	\begin{lemma}
		\label{lem:house4}
		There exists $d\ge40$ for which the following holds.
		For every $x\in(0,2^{-d})$ and every $\overline{P_5}$-free graph $G$ with $\abs G\ge x^{-d}$, there exist $k\in[2,1/x]$ and a pure or $x$-sparse $(k,\abs G/k^d)$-blockade in $G$.
	\end{lemma}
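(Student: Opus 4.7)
The plan is to reduce to \cref{lem:house3} by invoking R\"odl's theorem to dispose of the sparsity hypothesis, then to split into two cases according to whether the R\"odl-restricted induced subgraph is itself sparse or its complement is.

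First, I would fix $\eps:=c^{16}$ with $c=2^{-8}$, and let $\delta>0$ be the constant produced by \cref{thm:rodl} applied with $H=\overline{P_5}$ and parameter $\eps$; then every $\overline{P_5}$-free graph contains an $\eps$-restricted induced subgraph on at least a $\delta$ fraction of its vertices. Applying this to $G$ yields an induced subgraph $F\subseteq G$ with $\abs F\ge \delta\abs G$ such that one of $F,\overline F$ is $c^{16}$-sparse.

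In the first case, $F$ is $c^{16}$-sparse. Since $d\ge 40$ forces $x<2^{-d}\le 2^{-40}=c^5$, I can apply \cref{lem:house3} to $F$ with the same $x$. It returns either a pure $(k,\abs F/k^{34})$-blockade with $k\in[1/c,1/x]$, or, for some $y\in[x,c^5]$, an $x$-sparse $(y^{-1},\floor{y^7\abs F})$-blockade, which after setting $k:=y^{-1}\in[1/c^5,1/x]\subset[2,1/x]$ is an $x$-sparse $(k,\abs F/k^7)$-blockade in $G$. In both subcases, transferring the width bound from $\abs F$ to $\abs G$ costs only a constant factor $\delta$; since $k$ is bounded below by a constant, taking $d$ sufficiently large absorbs this into $k^{d-34}$ (respectively $k^{d-7}$), giving the required width $\abs G/k^d$.

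In the second case, $\overline F$ is $c^{16}$-sparse. Since $F$ inherits $\overline{P_5}$-freeness from $G$, $\overline F$ is $P_5$-free, and $c^{16}\le 2^{-5}=\eta$, so \cref{lem:sparse} applies to $\overline F$ and gives an anticomplete $(2,\floor{\eta\abs F})$-blockade in $\overline F$, which is a complete (hence pure) $(2,\floor{\eta\abs F})$-blockade in $F$, and so in $G$. With $k=2$ and width at least $\eta\delta\abs G-1$, this is a pure $(2,\abs G/2^d)$-blockade once $d$ is taken large enough, using $\abs G\ge x^{-d}$ to swallow the floor.

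The main obstacle is purely arithmetic bookkeeping: the R\"odl constant $\delta$, the factor $\eta$ from \cref{lem:sparse}, and the various floors in block widths must all be absorbed into the single exponent $d$. Since the statement only asks for some $d\ge 40$, and $k$ is bounded below by an absolute constant in every case, these constant-factor losses can always be hidden by choosing $d$ a sufficiently large constant depending only on the R\"odl constant $\delta$; the hypothesis $\abs G\ge x^{-d}$ then makes the floors into lower-order corrections.
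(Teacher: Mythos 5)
Your proposal is correct and takes essentially the same approach as the paper: apply R\"odl's theorem to obtain a $c^{16}$-restricted induced subgraph $F$, handle the case where $\overline F$ is sparse via \cref{lem:sparse}, handle the case where $F$ is sparse via \cref{lem:house3}, and absorb the constant-factor losses (the R\"odl constant, the factor $\eta$, and the floors) into the exponent $d$ using the lower bounds $k\ge 1/c$ and $\abs G\ge x^{-d}$.
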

	\begin{proof}
		Let $c:=2^{-8}$, and $\eta:=2^{-5}$, and let $\xi:=c^{16}$.
		By \cref{thm:rodl}, there exists $\theta\in(0,1)$ such that every $\overline{P_5}$-free graph $G$ contains a $\xi$-restricted induced subgraph with at least $\theta\abs G$ vertices.
		We shall prove that every $d\ge40$ with $2^{d}\ge(\eta\theta)^{-1}$ satisfies the lemma.
		To show this, let $x\in(0,2^{-d})$, and let $G$ be $\overline{P_5}$-free with $\abs G\ge x^{-d}\ge\eta^{-1}$.
		We must show that there exists $k\in[2,1/x]$ such that there is a pure or $x$-sparse $(k,\abs G/k^d)$-blockade in $G$.
		By the choice of $\theta$, $G$ has a $\xi$-restricted induced subgraph $F$ with $\abs F\ge\theta\abs G$.
		If $\overline F$ is $\xi$-sparse, then since $\overline F$ is $P_5$-free, \cref{lem:sparse} gives an anticomplete $(2,{\eta\abs F})$-blockade in $\overline F$;
		and we are done since ${\eta\abs F}\ge {\eta\theta\abs G}\ge {2^{-d}\abs G}$ by the choice of $d$.
		Hence, we may assume that $F$ is $\xi$-sparse (and so is $c^{16}$-sparse).
		Since $x\in(0,2^{-d})\subset(0,c^5)$, \cref{lem:house3} implies that either:
		\begin{itemize}
			\item for some $k\in[1/c,1/x]$, there is a pure $(k,\abs S/k^{34})$-blockade in $F$; or
			
			\item for some $y\in[x,c^5]$, there is an $x$-sparse $(y^{-1},{y^7\abs F})$-blockade in $F$.
		\end{itemize}
		
		If the first bullet holds, then $\abs G\ge x^{-d}\ge k^d$, $k\ge 1/c=2^8$, and $d\ge 40$ which together imply
		\[\abs F/k^{34}\ge \theta\abs F/k^{34}\ge 2^{-d}\abs F/k^{34}\ge k^{-d/8}\abs G/k^{34}
		\ge\abs F/k^d;\]
		and so there would be a pure $(k,\abs F/k^d)$-blockade in $G$ and we are done.
		If the second bullet holds, then since
		\[{y^7\abs F}\ge {\theta y^7\abs G}\ge {2^{-d}y^7\abs G}\ge {y^{d/8+7}\abs G}
		\ge {y^d\abs G},\]
		there would be an $x$-sparse $(y^{-1},y^d\abs G)$-blockade in $G$ and we are done.
		This completes the proof of \cref{lem:house4}.
	\end{proof}
	\section{The proof of \cref{lem:robust0}} \label{sec:conductors}
	Next we will deduce \cref{lem:robust0} from \cref{lem:house4}. If we take $x$ to be a power of $\eps^d$, then \cref{lem:house4} 
	already gives us something like what we want for \cref{lem:robust0}, but the blockade we obtain might have length too small.
	If so, then it still has very large blocks, and we can apply \cref{lem:house4} to each block to get
	a longer blockade, and repeat. 
	This idea is formalized in the following general theorem (with no $\overline{P_5}$-free condition),  which is 
	a slight modification of a theorem of~\cite{density4}.
	\begin{theorem}
		\label{thm:key}
		Let $\eps\in(0,\frac12)$ and $d\ge1$,
		and let $G$ be a graph with $\abs G\ge \eps^{-10d^2}$.
		Let $x:=\eps^{5d}$.
		Assume that for every induced subgraph $F$ of $G$ with $\abs F\ge \eps^{d}\abs G$, there exists $k\in[2,1/x]$ such that there is a pure or $x$-sparse $(k,\abs F/k^d)$-blockade in $F$.
		Then there is an $(\eps^{-1},{x^{2d}\abs G})$-blockade $(B_1,\ldots,B_{\ell})$ in $G$,
		such that for all distinct $i,j\in[\ell]$, $(B_i,B_j)$ is either complete or weakly $\eps^d$-sparse in $G$.
	\end{theorem}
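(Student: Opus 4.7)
The plan is a recursive construction. Define a procedure $\mathrm{Build}(F,N)$ that takes a subset $F\subseteq V(G)$ with $|F|\ge \eps^d|G|$ and an integer $N\ge 1$, and returns a blockade of length at least $N$ inside $F$ whose pairs are all complete or weakly $\eps^d$-sparse in $G$; the final blockade will be $\mathrm{Build}(V(G),\lceil\eps^{-1}\rceil)$. Procedurally: if $N=1$, return the singleton blockade $(F)$. Otherwise apply the hypothesis to $F$ to obtain $k\in[2,1/x]$ and a pure or $x$-sparse $(k,|F|/k^d)$-blockade $(C_1,\dots,C_k)$, and branch on $k$: \textbf{(a)} if $k\ge\eps^{-1}$, short-circuit the entire construction and return $(C_1,\dots,C_{\lceil\eps^{-1}\rceil})$ as the final blockade; \textbf{(b)} else if $k\ge N$, return $(C_1,\dots,C_N)$; \textbf{(c)} else ($k<N$), write $N=n_1+\cdots+n_k$ with each $n_i\in\{\lfloor N/k\rfloor,\lceil N/k\rceil\}$, recursively compute $\mathcal B_i=\mathrm{Build}(C_i,n_i)$, and return the concatenation $\mathcal B_1\ast\cdots\ast\mathcal B_k$.

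Induction on $N$ gives length at least $N$. A short bookkeeping argument on the path-product $P$ of branching factors down the recursion tree (which is $O(\eps^{-1})$ along any root-to-node path because branches (a) and (b) are avoided when we recurse) shows that the hypothesis-applicability invariant $|F|\ge\eps^d|G|$ is maintained at every internal call, and that every leaf block has size at least $\eps^{d+5d^2}|G|$ (the minimum coming from branch (a), where $k\le 1/x=\eps^{-5d}$ can be as large as this); since $d+5d^2\le 10d^2$ for $d\ge 1$, this meets the required width $\eps^{10d^2}|G|$.

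The core of the proof is the pair property. If branch (a) ever fires, the returned blockade lies inside a single hypothesis output and the property is immediate (pure $\Rightarrow$ each pair is complete or anticomplete; $x$-sparse $\Rightarrow$ each pair is weakly $\eps^d$-sparse because $x=\eps^{5d}\le\eps^d$). Otherwise, take two leaves $L_1,L_2$ of the recursion tree and their least common ancestor $F^*$; they lie in distinct sub-blocks $C_a,C_b$ of $F^*$'s hypothesis output. If $(C_a,C_b)$ is pure the property inherits; if $(C_a,C_b)$ is $x$-sparse with (say) $a<b$, then every vertex of $L_2\subseteq C_b$ has at most $x|C_a|$ neighbours in $C_a\supseteq L_1$, so $(L_1,L_2)$ has density at most $x|C_a|/|L_1|$. \textbf{The main obstacle} is then bounding this by $\eps^d$: it reduces to showing $|L_1|\ge\eps^{4d}|G|$ (since $|C_a|\le|G|$). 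As $L_1$ was produced without triggering branch (a), it is either a singleton base leaf or came from branch (b) with some $k'<\eps^{-1}$; tracking the path-product from the root to $L_1$'s originating node (with an extra factor of $k'<\eps^{-1}$ in the branch (b) case) yields $|L_1|\ge|G|\eps^{4d}$ in either case, provided $\eps\le 1/2$. Hence the density is at most $x\eps^{-4d}=\eps^d$, as required.
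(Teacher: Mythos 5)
Your proof is correct in its essentials, and it follows a genuinely different route from the paper's. The paper maintains a single object throughout: a ``layout'' $(J,(A_j)_j)$ in which $J$ records the complete/sparse pattern, each $A_j$ has size at least $\eps^{2d}\abs G$, the number of wrong pairs is at most $x$ times the number of decided pairs, and --- crucially --- $\sum_j\abs{A_j}^{1/d}\ge\abs G^{1/d}$. Maximizing $\abs J$ under these constraints, one either has $\abs J\ge\eps^{-1}$ already (and the wrong-pair budget forces every non-complete pair to be weakly $\eps^d$-sparse), or the largest block has size $\ge\eps^d\abs G$ and the hypothesis can be applied to it to extend $J$, contradicting maximality unless $k\ge\eps^{-1}$, in which case the blockade supplied by the hypothesis at that block already works. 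The $L^{1/d}$-mass invariant is exactly what makes the accounting tight: whenever $A$ is split into $k$ blocks each of size $\ge\abs A/k^d$, the invariant is preserved with no loss. Your construction replaces this with an explicit top-down recursion carrying an integer budget $N$, split as evenly as possible among the $k$ children, and a direct pairwise analysis at the LCA of two final blocks. The weak point is the assertion that the path product of branching factors is $O(\eps^{-1})$ ``because branches (a) and (b) are avoided'': as written this is a handwave, and the na\"{\i}ve estimate $\lceil N/k\rceil\le N/k+1$ compounded over $\log(1/\eps)$ levels would give a worse exponent. What actually saves it is the sharper inequality $k_j(N_j-1)\le N_{j-1}-1$, valid for both $N_j=\lfloor N_{j-1}/k_j\rfloor$ and $N_j=\lceil N_{j-1}/k_j\rceil$ when $N_j\ge 2$, which telescopes to $\prod_j k_j\le (N_0-1)/(N_L-1)\le\eps^{-1}$ for any root-to-internal-node path, and to at most $\tfrac{3}{2}\eps^{-1}$ along a path whose last step drops to a budget-$1$ leaf. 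Once that is filled in, the invariant $\abs F\ge\eps^d\abs G$ does hold at every node where the hypothesis is invoked, all final blocks have size at least $\eps^{2d}\abs G$ when branch (a) never fires (and at least $\eps^{d+5d^2}\abs G\ge x^{2d}\abs G$ in general), and the density bound $x\abs{C_a}/\abs{L_1}\le x\eps^{-4d}=\eps^d$ goes through as you say. You should also record, as the paper does, the harmless reduction to $\abs G\ge x^{-2d}$, so that the stated width floor is nonzero. In short: a valid alternative proof whose bookkeeping deserves one explicit lemma (the telescoping bound), in place of the paper's slicker extremal-layout argument.
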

	\begin{proof}
		Let $J$ be a graph; and for each $j\in V(J)$ let $A_j$ be a nonempty subset of $V(G)$, pairwise disjoint, such that for all distinct $i,j\in J$, $A_i$ is complete to $A_j$ whenever $i,j$ are adjacent in $J$. We call $\mathcal{L}=(J,(A_j:j\in V(J)))$
		a {\em layout}. A pair $\{u,v\}$ of distinct vertices of $G$ is {\em undecided} for a layout $(J,(A_j:j\in V(J)))$
		if there exists $j\in V(J)$ with $u,v\in A_j$; and {\em decided} otherwise.
		A decided pair $\{u,v\}$ is {\em wrong} for $(J,(A_j:j\in V(J)))$ if there are distinct
		$i,j\in V(J)$ such that $u\in A_i$, $v\in A_j$, and $u,v$ are adjacent in $G$ while $i,j$ are nonadjacent in $J$.
		We are interested in layouts in which the number of wrong pairs is only a small fraction of the number of decided pairs.
		Choose a layout $\mathcal{L}=(J,(A_j:j\in V(J)))$ satisfying the following:
		\begin{itemize}
			\item $\abs{A_j}\ge \eps^{2d}\abs G$ for each $j\in V(J)$;
			\item $\sum_{j\in V(J)}|A_j|^{1/d}\ge |G|^{1/d}$;
			\item the number of wrong pairs is at most $x$ times the number of decided pairs; and
			\item subject to these three conditions, $|J|$ is maximum.
		\end{itemize}
		(This is possible since we may take $V(J)=\{1\}$ and $A_1=V(G)$ to satisfy the first three conditions.)
		\begin{claim}
			\label{claim:short}
			We may assume that $|J|\le \eps^{-1}$.
		\end{claim}
		\begin{subproof}
			Assume that $|J|\ge \eps^{-1}$. 
			Since the number of wrong pairs is at most $x$ times the number of decided pairs and so at most $x\abs G^2$,
			for every distinct $i,j\in V(J)$ that are nonadjacent in $J$,
			the number of edges between $A_i,A_j$ is at most $x\abs G^2\le x\eps^{-4d}\abs{A_i}\abs{A_j}=\eps^d\abs{A_i}\abs{A_j}$;
			that is, $(A_i,A_j)$ is weakly $\eps^d$-sparse.
			Since $\abs{A_i}\ge\eps^{2d}\abs G\ge x^{2d}\abs G$ for each $j\in V(J)$,
			$(A_j:j\in V(J))$ is thus a blockade satisfying the theorem.
			This proves \cref{claim:short}. 
		\end{subproof}
		
		Let $A\in\{A_j:j\in V(J)\}$ satisfy $|A|=\max_{j\in V(J)} |A_j|$. Since $\sum_{j\in V(J)}|A_j|^{1/d}\ge |G|^{1/d}$,
		and $|J|\le \eps^{-1}$ by \cref{claim:short}, it follows that
		$|A|^{1/d}\ge \eps|G|^{1/d}$, that is, 
		$|A|\ge \eps^{d}|G|$.
		By applying the hypothesis to
		$G[A]$,
		we obtain a pure or $x$-sparse $(k,\abs{A}/k^d)$-blockade $(B_1,\ldots, B_{k})$ in $G[A]$, for some $k\in[2,1/x]$.
		Let $K$ be the graph with vertex set $[k]$, such that for all distinct $p,q\in[k]$, $p$ is adjacent to $q$ in $K$ if and only if $B_p$ is complete to $B_q$ in $G[A]$; in particular $K$ is edgeless if $(B_1,\ldots,B_{k})$ is $x$-sparse in $G[A]$.
		\begin{claim}
			\label{claim:long}
			$k\ge \eps^{-1}$.
		\end{claim}
		\begin{subproof}
			Suppose that $k\le \eps^{-1}$. Then each of the sets $B_1,\ldots, B_{k}$ has size at least
			$|A|/k^d\ge \eps^{d} |A|$.
			By substituting $K$ for
			the vertex of $J$ corresponding to $A$, and replacing $A$ by $B_1,\ldots, B_\ell$, we obtain a new layout $\mathcal{L}'=(J', (A_j':j\in V(J')))$ say,
			where $|J'|>|J|$.
			We claim that this violates the choice of $\mathcal{L}$; and so we must verify that $\mathcal{L}'$ satisfies the
			first three bullets in the definition of $\mathcal{L}$. 
			To see this, observe that each $B_p$ satisfies
			$|B_p|\ge \eps^{d} |A|\ge \eps^{2d} |G|$,
			and so the first bullet is satisfied.
			For the second bullet, since $B_1,\ldots, B_{k}$ all have size at least $|A|/k^d$, it follows that
			$$|B_1|^{1/d}+\cdots+|B_{k}|^{1/d}\ge |A|^{1/d},$$
			and so $\sum_{j\in V(J')}|A_j'|^{1/d}\ge |G|^{1/d}$.
			For the third bullet, let
			$P$ be the set of all decided pairs for $\mathcal{L}$, and $Q\subseteq P$ the set of wrong pairs for $\mathcal{L}$;
			and define $P',Q'$ similarly for $\mathcal{L}'$. Then $P\subset P'$ and $|Q|\le x|P|\le x\abs{P'}$. Let $R$ be the
			set of all pairs $\{u,v\}$ with $u,v\in A$ such that $u,v$ belong to different blocks
			of $(B_1,\ldots, B_k)$. Then $R\subseteq  P'\setminus P$ and $Q'\setminus Q\subseteq R$.
			If $(B_1,\ldots,B_k)$ is pure in $G[A]$ then $\abs{Q'}\le\abs Q\le x\abs{P'}$;
			and if $(B_1,\ldots, B_k)$ is $x$-sparse in $G[A]$, then $\abs{Q'\setminus Q}\le x\abs R$ which yields
			$|Q'\setminus Q|\le x|P'\setminus P|$, and so
			$$|Q'|\le |Q|+|Q'\setminus Q|\le x|P|+ x|P'\setminus P|= x|P'|.$$
			This contradicts the choice of $\mathcal{L}$, and so proves \cref{claim:long}.
		\end{subproof}
		Since $k\le1/x$ and $\abs A\ge\eps^d\abs G\ge x^d\abs G$, we have $\abs{B_p}\ge \abs A/k^d\ge x^d\abs A\ge x^{2d}\abs G$ for each $p\in[k]$;
		and for all distinct $p,q\in[k]$,
		$(B_p,B_q)$ is either complete or weakly $\eps^d$-sparse since $x=\eps^{5d}\le \eps^d$.
		Hence $(B_1,\ldots,B_k)$ satisfies the theorem.
		This proves \cref{thm:key}.
	\end{proof}
	By combining \cref{lem:house4} and \cref{thm:key}, we prove \cref{lem:robust0}, which we restate:
	\begin{lemma}
		\label{lem:epsone}
		There exists $d\ge40$ for which the following holds.
		Let $\eps\in(0,\frac12)$, and let $G$ be a $\overline{P_5}$-free graph with $\abs G\ge \eps^{-10d^2}$.
		Then there is an $(\eps^{-1},{\eps^{10d^2}\abs G})$-blockade $(B_1,\ldots,B_{\ell})$ in $G$, such that for all distinct $i,j\in[\ell]$, $(B_i,B_j)$ is either complete or weakly $\eps^d$-sparse in $G$.
	\end{lemma}
	\begin{proof}
		We claim that $d\ge40$ given by \cref{lem:house4} satisfies the lemma.
		Let $x:=\eps^{5d}\in(0,2^{-d})$; and we may assume that $\abs G\ge \eps^{-10d^2}=x^{-2d}$.
		For every induced subgraph $F$ of $G$ with $\abs F\ge \eps^d\abs G$, we have
		$\abs F\ge \eps^dx^{-2d}\ge x^{-d}$;
		and so by the choice of $d$, there exists $k\in[2,1/x]$ such that there is a pure or $x$-sparse $(k,\abs F/k^d)$-blockade in $F$.
		\cref{thm:key} now gives an $(\eps^{-1},{x^{2d}\abs G})$-blockade $(B_1,\ldots,B_{\ell})$ in $G$, such that for all distinct $i,j\in[\ell]$, $(B_i,B_j)$ is either complete or weakly $\eps^d$-sparse in $G$.
		Since $x^{2d}=\eps^{10d^2}$, this proves \cref{lem:epsone}.
	\end{proof}
	This completes the first half of the proof of \cref{thm:main}.
	\section{Deducing \cref{thm:main}}
	\label{sec:2ndround}
	In this section we complete the proof of \cref{thm:main}. 
	Let us make one point which might clarify why we need two rounds of iterative sparsification. \cref{lem:epsone} gives us blockades
	with the property that every pair of blocks is complete or weakly sparse: let us call them ``semisparse'' for this discussion.
	\cref{lem:house2} tells us essentially that:
	\begin{itemize}
		\item If $G$ is $\overline{P_5}$-free and $O(y^3)$-sparse, then either we can sparsify further or there is
		a semisparse blockade of length at least $(1/y)^{1/4}$ and at most $1/x$.
	\end{itemize}
	That result passed through the machinery of iterative sparsification, and was converted to \cref{lem:epsone}.
	As explained in \cref{sec:sketch}, semisparse blockades are insufficient for us to deduce the \erh{} property of $\overline{P_5}$-free graphs immediately.
	Nevertheless, since every $\overline{P_5}$-free graph contains such a blockade (with no sparsity
	condition), we have the freedom to specify the length of the blockade, by choosing $1/\eps$ appropriately.
	In particular, we can apply \cref{lem:epsone} in a $y$-sparse graph, choosing $\eps$ to be some huge power of $y$; and we deduce that:
	\begin{itemize}
		\item 
		If $G$ is $\overline{P_5}$-free and $y$-sparse, then either we can sparsify further or there is
		a semisparse blockade of length a huge power of $1/y$.
	\end{itemize}
	This is a much more powerful version of \cref{lem:house2}, since the length of the blockade is now ``fixed'' in terms of the density parameter $y$.
	It gives rise to a new way to sparsify, that is the second round of sparcification and the key to the remainder of the proof of \cref{thm:main}.
	Our plan is to say that in such a semisparse blockade, either there is a block containing a complete blockade with appropriate length and width, or there is one containing a decent portion that is anticomplete to nearly all of the rest of $G$.
	This is done via the following lemma.
	\begin{lemma}
		\label{lem:house6}
		There exists $d\ge40$ such that the following holds.
		Let $y\in(0,\frac12)$, and let $G$ be a $y$-sparse $\overline{P_5}$-free graph.
		Then either:
		\begin{itemize}
			\item there exists $S\subset V(G)$ with $\abs S\ge y^{30d^3}\abs G$ such that $G[S]$ is $y^{2d}$-sparse;
			
			\item there is a complete $(y^{-1},y^{33d^3}\abs G)$-blockade in $G$; or
			
			\item there are disjoint $X,Y\subset V(G)$ such that $\abs X\ge y^{33d^3}\abs G$, $\abs Y\ge(1-3y)\abs G$, and $Y$ is anticomplete to $X$ in $G$.
		\end{itemize}
	\end{lemma}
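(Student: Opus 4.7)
The plan is to follow the sketch preceding the lemma. First I would apply \cref{lem:epsone} with $\eps := y^{3d}$ (the calibration of this exponent is the crucial subtlety, discussed at the end) to obtain a $(y^{-3d},\floor{y^{30d^3}\abs G})$-blockade $(B_1,\ldots,B_\ell)$ in $G$ in which every pair $(B_i,B_j)$ is either complete or weakly $y^{3d^2}$-sparse. For each $i$, I would examine the anticonnected components of $G[B_i]$: if none has size at least $y\abs{B_i}$, then \cref{lem:cores} applied to $\overline{G[B_i]}$ yields a complete $(y^{-1},y^2\abs{B_i})$-blockade in $G[B_i]$, of width at least $y^{30d^3+2}\abs G \ge y^{33d^3}\abs G$, which is the second outcome. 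Otherwise I replace each $B_i$ by a largest anticonnected component, losing at most a factor $y$ in the width and a factor $y^{-2}$ in the pair densities; from now on every block is anticonnected in $G$.

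For each $v\in V(G)$ set $M(v) := \{i : v\notin B_i \text{ and } v \text{ is mixed on } B_i\}$. Suppose some $v$ satisfies $\abs{M(v)}\ge y\ell/2$. Then for distinct $i,j\in M(v)$ the pair $(B_i,B_j)$ cannot be complete in $G$: if it were, using that $B_i,B_j$ are anticonnected and $v$ is mixed on both, walking along a path in $\overline{G[B_i]}$ between a $v$-neighbour and a $v$-non-neighbour yields $u_i,w_i\in B_i$ with $u_iw_i\notin E(G)$, $vu_i\in E(G)$, $vw_i\notin E(G)$, and similarly $u_j,w_j\in B_j$; the induced subgraph on $\{v,u_i,w_i,u_j,w_j\}$ would be a copy of $\overline{P_5}$ (as in \cref{fig:conductor}), contradicting the hypothesis. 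Hence every pair inside $M(v)$ is weakly sparse. Letting $S := \bigcup_{i\in M(v)} B_i$, the between-block contribution to the edge density of $G[S]$ is at most $y^{3d^2-2}$ and the within-block contribution is at most $O(1/\abs{M(v)}) = O(y^{3d-1})$; for $d\ge 40$ the total is bounded by $y^{2d}$ up to a constant factor. A Markov-type discard of the vertices of $S$ whose degree in $G[S]$ is too large retains at least half of $S$ and produces a $y^{2d}$-sparse induced subgraph of size at least $y^{30d^3}\abs G$, the first outcome.

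If no such $v$ exists then $\sum_i\abs{\{v\notin B_i : v \text{ mixed on } B_i\}} \le y\ell\abs G/2$. At most $1/y$ blocks have size greater than $y\abs G$, which is negligible compared to $\ell \ge y^{-3d}$, so averaging over the remaining ``small'' blocks yields some $B_i$ with $\abs{B_i}\le y\abs G$ on which at most $y\abs G$ vertices are mixed. The number of vertices complete to $B_i$ is also at most $y\abs G$, because each such vertex has at least $\abs{B_i}$ neighbours and the total number of edges incident to $B_i$ is at most $\abs{B_i}\cdot y\abs G$. Hence at least $(1-3y)\abs G$ vertices of $V(G)$ are anticomplete to $B_i$; taking $X\subseteq B_i$ of size $y^{33d^3}\abs G$ (possible because $\abs{B_i}\ge y^{30d^3+1}\abs G$) and $Y$ to be the set of vertices anticomplete to $B_i$ delivers the third outcome.

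The main obstacle is the calibration of $\eps$ in the opening application of \cref{lem:epsone}. The sketch's value $\eps=y^d$ produces within-block density only $O(y^{d-1})$ in the first case, which is weaker than the required $y^{2d}$-sparsity; a larger polynomial power such as $\eps = y^{3d}$ forces the within-block density down to $O(y^{3d-1})\le y^{2d}$ while keeping $\abs S\ge y^{30d^3}\abs G$. The constants introduced by the Markov discard and the anticonnected refinement are absorbed into the exponents $30d^3$ and $33d^3$ of the final outcomes.
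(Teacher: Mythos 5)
Your overall strategy matches the paper's: apply \cref{lem:epsone} with $\eps=y^{3d}$, pass to anticonnected blocks, use the $\overline{P_5}$ argument to show that a vertex mixed on many blocks forces those blocks to be pairwise non-complete (hence pairwise weakly sparse), and otherwise locate a block with few mixed and few complete neighbours. The third-outcome argument is fine. But there is a genuine gap in your first-outcome argument, and it is precisely the point the paper's Claim~\ref{claim:clean} is designed to handle.

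After replacing each $B_i$ by a largest anticonnected component, the blocks may have wildly different sizes: \cref{lem:epsone} only guarantees a \emph{lower} bound on the widths, and taking anticonnected components can shrink them by different factors. Your estimate ``the within-block contribution to the edge density of $G[S]$ is at most $O(1/\abs{M(v)})$'' is $\sum_i \abs{B_i}^2/(\sum_i\abs{B_i})^2$, which equals $1/r$ only when the blocks are (approximately) of equal size; if one block dominates, this quantity can be close to $1$, and the subsequent Markov discard cannot rescue a $y^{2d}$-sparse subgraph. Trimming all blocks to a common size $s$ does not fix this either, because the weak-sparsity bound between two blocks is relative to their original sizes; restricting to arbitrary subsets of size $s$ can concentrate all the cross edges in the chosen subsets. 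The paper avoids this by taking uniformly \emph{random} subsets $X_i\subseteq A_i$ of a fixed size $m$ (so that the density between $X_i,X_j$ is controlled in expectation and, via a union bound, with positive probability), then extracting anticonnected components and trimming again to a uniform size $\ceil{\eps^2 m}$, while additionally cleaning up the pairs to get an actual max-degree bound ($\eps^{d-8}$-sparse to each other) rather than only a density bound. Your density-plus-Markov route is in principle viable as an alternative to the paper's max-degree cleaning, but only after you insert the random uniform-subsampling step to normalize block sizes; as written the argument does not go through.
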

	
	Let us give a sketch of the proof of this lemma, which uses 
	the semisparse blockades given by \cref{lem:robust0} (that is, \cref{lem:epsone}).
	We are given a small positive variable $y$ and a $y$-sparse $\overline{P_5}$-free graph $G$. As discussed above, we try to do sparsification;
	if we can find a slightly smaller value $y'$ such that there is a $y'$-sparse induced subgraph of size $\poly(y'/y)|G|$,
	we will take that as an outcome. 
	We apply \cref{lem:epsone} with $\eps=y^d$ to get a $(y^{-d},\floor{y^{10d^3}\abs G})$-blockade $\mac B=(B_1,\ldots,B_{\ell})$ in $G$ (where $\ell=\ceil{y^{-d}}$) such that every pair $(B_i,B_j)$ is either complete or weakly $y^{d^2}$-sparse.
	Here, unless the second outcome of \cref{lem:house6} occurs, \cref{lem:cores} and a probabilistic argument allow us to assume that each $B_i$ is anticonnected in $G$ and of size about $y^{10d^3}\abs G$ (up to minor changes in their sizes and the density between them).
	How does the rest of $G$ attach to $\mac B$? Let $v$ be some vertex not in any of the blocks of $\mac B$. Then $v$
	is anticomplete to some of the blocks, complete to others, and mixed on the remainder.
	If there is some $v$ outside of $\mac B$ that is mixed on at least $y\ell$ blocks,
	then no two of these blocks are complete to each other;
	for otherwise there would be a copy of $\overline{P_5}$; this is where the complete property is crucial 
	(see \cref{fig:conductor}).

	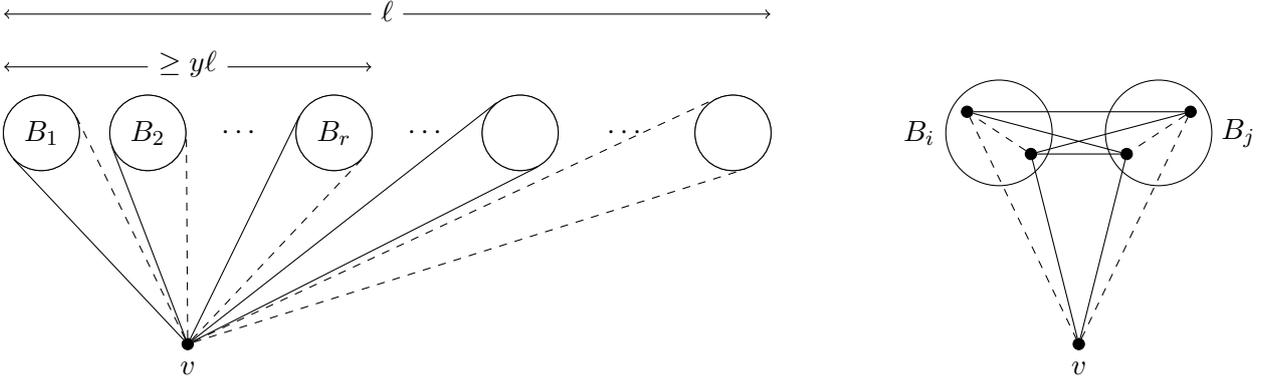
\begin{figure}[ht]
		\centering
		
		\begin{tikzpicture}[scale=0.65,auto=left]
			
			\node[inner sep=1.5pt, fill=black,circle,draw] (v) at ({-1.25},{-4}) {};
			\node[below=0.1cm] at (v) {$v$};

			\node[] (b1) at (-4,0) {};
			\node [circle,draw,name path=circle,fill=white] (c1) at (b1) [minimum size=1cm] {};
			\draw[dashed] (v)  
			-- (tangent cs:node=c1,point={(v)},solution=1) 
			coordinate[] ();
			\draw[-] (v) -- (tangent cs:node=c1,point={(v)},solution=2)
			coordinate[] ();
			\node [circle,draw,name path=circle,fill=white] (c1) at (b1) [minimum size=1cm] {};
			\node[] at (b1) {$B_1$};
			
			\node[] (b2) at (-2,0) {};
			\node [circle,draw,name path=circle,fill=white] (c2) at (b2) [minimum size=1cm] {};
			\draw[dashed] (v)  
			-- (tangent cs:node=c2,point={(v)},solution=1) 
			coordinate[] ();
			\draw[-] (v) -- (tangent cs:node=c2,point={(v)},solution=2)
			coordinate[] ();
			\node [circle,draw,name path=circle,fill=white] (c2) at (b2) [minimum size=1cm] {};
			\node[] at (b2) {$B_2$};
			
			\node[] (br) at (1.5,0) {};
			\node [circle,draw,name path=circle,fill=white] (cr) at (br) [minimum size=1cm] {};
			\draw[dashed] (v)  
			-- (tangent cs:node=cr,point={(v)},solution=1) 
			coordinate[] ();
			\draw[-] (v) -- (tangent cs:node=cr,point={(v)},solution=2)
			coordinate[] ();
			\node [circle,draw,name path=circle,fill=white] (cr) at (br) [minimum size=1cm] {};
			\node[] at (br) {$B_r$};
			
			\node[] (bi) at (5,0) {};
			\node [circle,draw,name path=circle,fill=white] (ci) at (bi) [minimum size=1cm] {};
			\draw[-] (v)  
			-- (tangent cs:node=ci,point={(v)},solution=1) 
			coordinate[] ();
			\draw[-] (v) -- (tangent cs:node=ci,point={(v)},solution=2)
			coordinate[] ();
			\node [circle,draw,name path=circle,fill=white] (ci) at (bi) [minimum size=1cm] {};
			
			\node[] (bj) at (9,0) {};
			\node [circle,draw,name path=circle,fill=white] (cj) at (bj) [minimum size=1cm] {};
			\draw[dashed] (v)  
			-- (tangent cs:node=cj,point={(v)},solution=1) 
			coordinate[] ();
			\draw[dashed] (v) -- (tangent cs:node=cj,point={(v)},solution=2)
			coordinate[] ();
			\node [circle,draw,name path=circle,fill=white] (cj) at (bj) [minimum size=1cm] {};
			
			\node[] at (-1.25,1.3) {$\ge y\ell$};
			\draw[->] (-2,1.25) -- (-4.7, 1.25) {};
			\draw[->] (-0.5,1.25) -- (2.2, 1.25) {};
			\node[] at (2.5,2.3) {$\ell$};
			\draw[->] (2.2,2.25) -- (-4.7, 2.25) {};
			\draw[->] (2.8,2.25) -- (9.7, 2.25) {};
			
			\node[] at (-0.25,0) {$\cdots$};
			\node[] at (3.25,0) {$\cdots$};
			\node[] at (7,0) {$\cdots$};
			
			\draw[] (14,0) circle (1cm);
			\draw[] (17,0) circle (1cm);
			\node[] at (12.5,-0) {$B_i$};
			\node[] at (18.5,0) {$B_j$};
			
			\node[inner sep=1.5pt, fill=black,circle,draw] (u) at ({15.5},{-4}) {};
			\node[below=0.1cm] at (u) {$v$};
			
			\node[inner sep=1.5pt, fill=black,circle,draw] (v1) at ({13.4},{0.4}) {};
			\node[inner sep=1.5pt, fill=black,circle,draw] (v2) at ({14.6},{-0.4}) {};
			\node[inner sep=1.5pt, fill=black,circle,draw] (v3) at ({16.4},{-0.4}) {};
			\node[inner sep=1.5pt, fill=black,circle,draw] (v4) at ({17.6},{0.4}) {};
			\draw[dashed] (v2) -- (v1) -- (u) -- (v4) -- (v3);
			\draw[-] (v2) -- (u) -- (v3) -- (v2) -- (v4) -- (v1) -- (v3);
		\end{tikzpicture}
		
		\caption{Using a really long semisparse blockade.}
		\label{fig:conductor}
	\end{figure}
	
	Hence, these $y\ell$ blocks are pairwise weakly $y^{d^2}$-sparse; and so their union has edge density about $O((y\ell)^{-1})=O(y^{d-1})$ and size at least $y^{10d^3}\abs G$, which is a desirable sparsification outcome.
	So we assume that there is no such $v$. It follows that 
	there is some $B_i$ with at most $O(y)\abs G$ vertices of $G$ mixed on it. But only a few vertices are complete to $B_i$
	since $G$ is $y$-sparse; so almost all are anticomplete to $B_i$. More exactly, 
	$B_i$ is anticomplete to a vertex subset of size $(1-O(y))\abs G$, 
	which satisfies the third outcome of \cref{lem:house6} since $\abs{B_i}$ is about $y^{10d^3}\abs G$.
	(This type of argument also appears in~\cite{density6} where we show that graphs of bounded VC-dimension have polynomial-sized cliques or stable sets.)

	We now provide a rigorous proof of \cref{lem:house6}, as follows.
	
	\begin{proof}
		[Proof of \cref{lem:house6}]
		We claim that $d\ge40$ given by \cref{lem:epsone} satisfies the lemma.
		To show this, let $y,G$ be as in the lemma statement;
		and assume that the first two outcomes do not hold.
		In particular $\abs G\ge y^{-30d^3}$ since the first outcome does not hold.
		Let $\eps:=y^{3d}\in (0,2^{-3d})$;
		then $\abs G\ge y^{-30d^3}= \eps^{-10d^2}$.
		Let $\ell:=\ceil{\eps^{-1}}$ and $m:=\ceil{\eps^{10d^2}\abs G}\le \eps\abs G$.
		\begin{claim}
			\label{claim:clean}
			There is a blockade $(B_1,\ldots,B_{\ell})$ in $G$ such that:
			\begin{itemize}
				\item for all $i\in[\ell]$, $B_i$ is anticonnected in $G$ and $\abs{B_i}=\ceil{\eps^2m}$; and
				
				\item for all distinct $i,j\in[\ell]$, $(B_i,B_j)$ is either complete or $\eps^{d-8}$-sparse to each other in $G$.
			\end{itemize}
		\end{claim}
		\begin{subproof}
			By \cref{lem:epsone}, there is an $(\eps^{-1},{\eps^{10d^2}\abs G})$-blockade $(A_1,\ldots,A_{\ell})$ in $G$, where $\ell=\ceil{\eps^{-1}}\le2\eps^{-1}$, such that for all distinct $i,j\in[\ell]$, $(A_i,A_j)$ is complete or weakly $\eps^d$-sparse in $G$.
			Let $J$ be the~graph with vertex set $[\ell]$ where distinct $i,j\in V(J)$ are adjacent in $J$ if and only if $A_i$ is complete to $A_j$ in $G$.
			
			For each $i\in[\ell]$, let $X_i$ be a uniformly random subset of $A_i$ of size $m=\ceil{\eps^{10d^2}\abs G}$.
			For all distinct $i,j\in[\ell]$ with $ij\nin E(J)$, the expected number of edges between $X_i,X_j$ in $G$ is at most $\eps^d\abs{X_i}\abs{X_j}$;
			and so, since $\frac12\ell^2=\frac12\ceil{\eps^{-1}}^2\le\eps^{-2}$, with positive probability $(X_i,X_j)$ is weakly $\eps^{d-2}$-sparse for all distinct $i,j\in[\ell]$ with $ij\nin E(J)$.
			
			For $i=1,2,\ldots,\ell$ in turn, define a subset $B_i$ of $X_i$ as follows.
			Assume that $B_1,\ldots,B_{i-1}$ have been defined, such that $\abs{B_p}=\ceil{\eps^2m}$ for all $1\le p<q\le\ell$ with $pq\nin E(J)$ and $p<i$,
			\begin{itemize}
				\item $B_p$ is $\eps^{d-6}$-sparse to $B_q$ and $B_q$ is $\eps^{d-8}$-sparse to $B_p$ if $q<i$; and
				
				\item 
				$B_p$ is $\eps^{d-4}$-sparse to $X_q$ if $q\ge i$.
			\end{itemize}
			For each $p\in[\ell]\setminus\{i\}$ with $pi\nin E(J)$, let $C_p$ be the set of vertices in $X_i$ with at least $\eps^{d-8}\abs{B_p}$ neighbours in $B_p$ if $p<i$,
			and let $C_p$ be the set of vertices in $X_i$ with at least $\eps^{d-4}\abs{X_p}$ neighbours in $X_p$ if $p>i$;
			then $\abs{C_p}\le \eps^2\abs{X_i}$ for all $p\in[\ell]\setminus\{i\}$.
			Let $D_i:=X_i\setminus(\bigcup_{p\in[\ell]\setminus\{i\},pi\nin E(J)}C_p)$;
			then $\abs{D_i}\ge (1-\eps^2\ell)\abs{X_i}\ge (1-2\eps)\abs{X_i}\ge\frac12m$.
			If $G[D_i]$ has no anticonnected component of size at least $\abs{D_i}/\ell$,
			then \cref{lem:cores} (with $k=\ell$) would give a complete $(\ell,\abs{D_i}/\ell^2)$-blockade in $G[D_i]$;
			but this satisfies the second outcome of the lemma since $\abs{D_i}/\ell^2\ge \frac18\eps^2m\ge \frac1{8}\eps^{2+10d^2}\abs G\ge \eps^{11d^2}\abs G=y^{33d^3}\abs G$ and $\ell\ge\eps^{-1}\ge y^{-1}$, a contradiction.
			Thus, $G[D_i]$ has an anticonnected component $B_i$ with $\abs{B_i}\ge\abs{D_i}/\ell\ge \frac14\eps m\ge\eps^2m$.
			By removing vertices from $B_i$ if necessary, we may assume that $\abs{B_i}=\ceil{\eps^2m}$.
			For every $1\le p<i$ with $pi\nin E(J)$, since $B_p$ is $\eps^{d-4}$-sparse to $X_i$, it follows that $B_p$ is $\eps^{d-6}$-sparse to $B_i$; and $B_i$ is $\eps^{d-8}$-sparse to $B_p$ by definition.
			
			This completes the inductive definition of $B_1,\ldots,B_{\ell}$; and it is not hard to check that $(B_1,\ldots,B_{\ell})$ is a blockade of $G$ satisfying the claim.
			This proves \cref{claim:clean}.
		\end{subproof}
		Let $B:=V(G)\setminus(B_1\cup\cdots\cup B_{\ell})$; then since $\eps\le y^2$, we have
		\[\abs B\ge \abs G-\ell\ceil{\eps^2m}
		\ge \abs G-2\ell\eps^2m
		\ge \abs G-4\eps m
		\ge \abs G-m\ge (1-\eps)\abs G\ge(1-y^2)\abs G.\]
		
		\begin{claim}
			\label{claim:mixed}
			No vertex in $B$ is mixed on at least $y\ell$ blocks among $(B_1,\ldots,B_{\ell})$.
		\end{claim}
		\begin{subproof}
			Suppose there is such a vertex $v\in B$;
			and assume that it is mixed on $B_1,\ldots,B_r$, where $r\ge y\ell\ge y^{2d+1}$.
			If there are distinct $i,j\in[r]$ such that $B_i$ is complete to $B_j$ in $G$,
			then since $B_i,B_j$ are anticonnected in $G$,
			there would be $u_i,w_i\in B_i$ and $u_j,w_j\in B_j$ such that $u_iv,u_jv\in E(G)$ and $w_iv,w_jv\nin E(G)$;
			but then $\{v,u_i,u_j,v_i,v_j\}$ would form a copy of $\overline{P_5}$ in $G$ (see \cref{fig:conductor}), a contradiction.
			Thus, $B_i$ is $\eps^{d-8}$-sparse to $B_j$ for all distinct $i,j\in[r]$.
			Let $S:=\bigcup_{i\in[r]}B_i$; then $\abs S=rm$ and $G[S]$ has maximum degree at most
			\[m+r\eps^{d-8}m\le (y^{2d+1}+\eps^{d-8})rm
			\le 2y^{2d+1}rm\le y^{2d}rm=y^{2d}\abs S\]
			where the penultimate inequality holds since $\eps^{d-8}=y^{3d(d-8)}\le y^{3d}\le y^{2d+1}$ (note that $d\ge40$).
			Thus $G[S]$ is $y^{2d}$-sparse; but then $S$ satisfies the first outcome of the lemma since $\abs S=rm\ge \eps^{10d^2}\abs G=y^{30d^2}\abs G$, a contradiction.
			This proves \cref{claim:mixed}.
		\end{subproof}
		\cref{claim:mixed} says that every vertex in $B$ is mixed on fewer than $y\ell$ blocks among $(B_1,\ldots,B_{\ell})$;
		and so there exists $i\in[\ell]$
		such that there are fewer than $y\abs B$ vertices in $B$ mixed on $B_i$.
		Thus, since $G$ is $y$-sparse, there are at most $y\abs G+y\abs B$ vertices in $B$ with a neighbour in $B_i$.
		Let $Y$ be the set of vertices in $B$ with no neighbour in $B_i$; then, because $\abs B\ge (1-y^2)\abs G$, we have
		\[\abs Y\ge (1-y)\abs B-y\abs G\ge (1-y)(1-y^2)\abs G-y\abs G
		\ge (1-3y)\abs G\]
		and the third outcome of the lemma holds since $\abs{B_i}\ge \eps^2m\ge \eps^{2+10d^2}\abs G\ge \eps^{11d^2}\abs G=y^{33d^2}\abs G$.
		This proves \cref{lem:house6}.
	\end{proof}
	Let us now turn the third outcome of \cref{lem:house6} into an anticomplete blockade outcome.
	\begin{lemma}
		\label{lem:house7}
		There exists $d\ge40$ such that the following holds.
		Let $y\in(0,4^{-6}]$, and let $G$ be a $y$-sparse $\overline{P_5}$-free graph. Then either:
		\begin{itemize}
			\item there exists $S\subset V(G)$ with $\abs S\ge y^{16d^3}\abs G$ such that $G[S]$ is $y^d$-sparse; or
			
			\item there is a complete or anticomplete $(y^{-1/2},y^{18d^3}\abs G)$-blockade in $G$.
		\end{itemize}
	\end{lemma}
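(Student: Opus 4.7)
My plan is to iterate \cref{lem:house6} with the rescaled sparsity parameter $z:=y^{1/2}$ in place of $y$. The choice $z=y^{1/2}$ is essentially forced: it is the unique rescaling that matches the parameters of \cref{lem:house6} to those of the current statement, since $z^{-1}=y^{-1/2}$ matches the target blockade length and $z^{2d}=y^d$ matches the target sparsity. Because $y\le y^{1/2}<1/2$, the graph $G$ is still $z$-sparse, so \cref{lem:house6} applies with parameter $z$.

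Starting from $Y_0:=V(G)$ and $k:=0$, I would iteratively apply \cref{lem:house6} to $G[Y_k]$ with parameter $z$. If outcome (i) occurs, we obtain a $z^{2d}=y^d$-sparse induced subgraph of size at least $z^{30d^3}\abs{Y_k}=y^{15d^3}\abs{Y_k}$, which exceeds $y^{16d^3}\abs G$ as soon as $\abs{Y_k}$ is a constant fraction of $\abs G$; this gives the first outcome of the lemma. If outcome (ii) occurs, we obtain a complete blockade of length $z^{-1}=y^{-1/2}$ and width $z^{33d^3}\abs{Y_k}=y^{16.5d^3}\abs{Y_k}\ge y^{18d^3}\abs G$, giving the second outcome as a complete blockade. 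Otherwise outcome (iii) produces disjoint $X_{k+1},Y_{k+1}\subseteq Y_k$ with $\abs{X_{k+1}}\ge y^{16.5d^3}\abs{Y_k}$, $\abs{Y_{k+1}}\ge(1-3y^{1/2})\abs{Y_k}$, and $Y_{k+1}$ anticomplete to $X_{k+1}$ in $G$; we then set $k:=k+1$ and continue. If outcome (iii) fires $\ell:=\lceil y^{-1/2}\rceil$ times in succession, then since $X_{i+1}\subseteq Y_i$ is anticomplete to $X_1,\ldots,X_i$ by induction, the sequence $(X_1,\ldots,X_\ell)$ is a pairwise anticomplete blockade in $G$ of length $\ge y^{-1/2}$ and width $\ge y^{18d^3}\abs G$, giving the second outcome as an anticomplete blockade.

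The quantitative bookkeeping reduces to the estimate $(1-3y^{1/2})^{\lceil y^{-1/2}\rceil}\ge e^{-4}$, which follows from $\log(1-t)\ge-t-t^2$ for $t\in(0,1/2)$ together with the hypothesis $y\le 4^{-6}$. This yields $\abs{Y_k}\ge e^{-4}\abs G$ throughout the iteration, and from it one checks both (a) $G[Y_k]$ remains $z$-sparse, since its maximum degree is bounded by $y\abs G\le e^4 y\abs{Y_k}\le y^{1/2}\abs{Y_k}$ (the last inequality $e^4 y\le y^{1/2}$ is equivalent to $y\le e^{-8}$, and is implied by $y\le 4^{-6}$); and (b) the factor $e^{-4}$ lost in each block or subgraph size is absorbed by the $d^3$-scale slack between the exponents supplied by \cref{lem:house6} and those required here, since $e^{-4}\ge y^{c d^3}$ for $c\in\{1,1.5\}$ whenever $d\ge 40$ and $y\le 4^{-6}$. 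I do not expect a genuine obstacle beyond identifying $z=y^{1/2}$ as the correct rescaling; once this is in hand the iteration is entirely routine, with the factor-of-two slack in each exponent (e.g.\ $15d^3$ versus $16d^3$, $16.5d^3$ versus $18d^3$) comfortably accommodating the lost constants.
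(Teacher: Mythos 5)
Your proposal is correct and takes essentially the same route as the paper: both identify the rescaling $z = y^{1/2}$ and iteratively apply \cref{lem:house6} to a shrinking "large" set until either an anticomplete blockade of length $y^{-1/2}$ accumulates or one of the other two outcomes fires, with the $d^3$-scale slack in the exponents absorbing the constant lost in the iteration. The paper phrases the iteration as a maximality argument over anticomplete blockades rather than an explicit induction, but this is only a cosmetic difference, and your quantitative bookkeeping (including $y \le 4^{-6} \le e^{-8}$ to preserve $y^{1/2}$-sparsity) checks out.
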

	\begin{proof}
		We claim that $d\ge40$ given by \cref{lem:house6} satisfies the lemma.
		We may assume $\abs G\ge y^{-16d^3}$, for otherwise the first outcome trivially holds.
		Let $n\ge0$ be maximal such that there is an anticomplete blockade $(B_0,B_1,\ldots,B_n)$ of $G$ with $\abs{B_n}\ge(1-2y^{1/2})^n\abs G$ and $\abs{B_{i-1}}\ge y^{18d^3}\abs G$ for all $i\in[n]$.
		If $n\ge y^{-1/2}$ then the second outcome of the lemma holds; and so we may assume $n<y^{-1/2}$. Then since $y\le 4^{-6}$,
		\[\abs{B_n}\ge(1-3y^{1/2})^n\abs G\ge 4^{-3y^{1/2}n}\abs G\ge 4^{-3}\abs G\ge y\abs G\ge y^{-15d^3}=(y^{-1/2})^{30d^3}\]
		and so $G[B_n]$ has maximum degree at most $y\abs G\le 4^{3}y\abs {B_n}\le y^{1/2}\abs {B_n}$ since $y\le 4^{-6}$.
		Thus, by \cref{lem:house6} (with $y^{1/2}$ in place of $y$), either:
		\begin{itemize}
			\item there exists $S\subset B_n$ with $\abs S\ge y^{15d^3}\abs {B_n}$ such that $G[S]$ is $y^d$-sparse;
			
			\item there is a complete $(y^{-1/2},y^{17d^3}\abs{B_n})$-blockade in $G[B_n]$; or
			
			\item there are disjoint $X,Y\subset B_n$ such that $\abs X\ge y^{17d^3}\abs{B_n}$, $\abs Y\ge(1-2y^{1/2})\abs{B_n}$, and $Y$ is anticomplete to $X$ in $G$.
		\end{itemize}
		
		If the first bullet holds, then $\abs S\ge y^{15d^3}\abs{B_n}\ge y^{16d^3}\abs G$ and the first outcome of the lemma holds.
		If the second bullet holds, then since $y^{17d^3}\abs{B_n}\ge y^{18d^3}\abs G$, the second outcome of the lemma holds.
		If the third bullet holds, then since $\abs X\ge y^{17d^3}\abs{B_n}\ge y^{18d^3}\abs G$ and $\abs Y\ge(1-2y^{1/2})\abs{B_n}\ge(1-2y^{1/2})^{n+1}\abs G$,
		$(B_0,B_1,\ldots,B_{n-1},X,Y)$ would contradict the maximality of $n$.
		This proves \cref{lem:house7}.
	\end{proof}
	Next we eliminate the sparsity hypothesis of \cref{lem:house7}, by means of R\"odl's \cref{thm:rodl} and iterative sparsification. We deduce \cref{lem:house0}, which we restate:
	\begin{lemma}
		\label{lem:house}
		There exists $a\ge1$ such that the following holds.
		For every $x\in(0,\frac12)$ and every $\overline{P_5}$-free graph $G$, either:
		\begin{itemize}
			\item $G$ has an $x$-restricted induced subgraph with at least $x^a\abs G$ vertices; or
			
			\item there is a complete or anticomplete $(k,\abs G/k^a)$-blockade in $G$, for some $k\in[2,1/x]$.
		\end{itemize}
	\end{lemma}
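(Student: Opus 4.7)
The plan is to remove the sparsity hypothesis from \cref{lem:house7} by combining R\"odl's theorem with iterative sparsification, in direct analogy with the derivation of \cref{lem:house4} from \cref{lem:house3}. Take $\xi := 4^{-6}$ and let $\theta \in (0,1)$ be the constant provided by \cref{thm:rodl} such that every $\overline{P_5}$-free graph contains a $\xi$-restricted induced subgraph of size at least $\theta|G|$. Let $F_0$ be such a subgraph of $G$. If $\overline{F_0}$ is $\xi$-sparse, then since $\overline{F_0}$ is $P_5$-free and $\xi \le \eta$, applying \cref{lem:sparse} to $\overline{F_0}$ yields an anticomplete $(2,\lfloor\eta|F_0|\rfloor)$-blockade in $\overline{F_0}$, i.e., a complete $(2,\cdot)$-blockade in $G$; this gives the second conclusion of the lemma with $k=2$, provided $a$ is chosen so that $\eta\theta \ge 2^{-a}$.

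Otherwise $F_0$ is $\xi$-sparse, and I iterate \cref{lem:house7}. Set $y_i := \xi^{d^i}$ and build a nested sequence $F_0 \supseteq F_1 \supseteq \cdots$ of induced subgraphs of $G$ such that $F_i$ is $y_i$-sparse and $|F_i| \ge \theta \prod_{j<i} y_j^{16d^3}\,|G|$. At each step, apply \cref{lem:house7} (with $y = y_i$) to $F_i$: if its first outcome occurs, take $F_{i+1}$ to be the resulting subgraph, which is $y_{i+1}$-sparse with $|F_{i+1}| \ge y_i^{16d^3}|F_i|$; if its second outcome occurs, one obtains a complete or anticomplete $(y_i^{-1/2},y_i^{18d^3}|F_i|)$-blockade in $F_i \subseteq G$, and we stop. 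Here $y_i \le \xi < 1/4$ gives $y_i^{-1/2} \ge 2$, and (as long as we have not yet stopped) $y_i > x$ gives $y_i^{-1/2} < x^{-1/2} \le 1/x$, so $k := \lfloor y_i^{-1/2}\rfloor \in [2,1/x]$. Otherwise, halt at the least $i^*$ with $y_{i^*} \le x$; then $F_{i^*}$ is $x$-sparse and hence $x$-restricted.

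Routine telescoping of the size product yields $|F_i| \ge \theta\,y_i^{16d^3/(d-1)}|G|$, up to an absolute constant factor depending on $\xi$. For the halting case, $y_{i^*} = y_{i^*-1}^d > x^d$ (since $y_{i^*-1}>x$), so $|F_{i^*}| \ge \theta x^{16d^4/(d-1)}|G| \ge x^a|G|$ once $a$ is a sufficiently large constant in terms of $d$, $\xi$, $\theta$. For the blockade case at step $i$, the width $y_i^{18d^3}|F_i| \ge \theta\,y_i^{18d^3 + 16d^3/(d-1)}|G|$ must be compared with $|G|/k^a \le y_i^{a/2}|G|$; this reduces to an exponent inequality of the form $a/2 \ge 18d^3 + 16d^3/(d-1) + O(1)$, again satisfied for $a$ a sufficiently large constant.

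The main obstacle is purely arithmetic: calibrating a single constant $a$ (of order roughly $d^4$, driven by the halting case where $y_{i^*}$ may be as small as $x^d$) so that it simultaneously suffices for the $x$-restricted subgraph outcome and for the blockade outcome across all possible halting steps $i \le i^*$. No new combinatorial ideas are required beyond iterating \cref{lem:house7}; the analysis mirrors that of \cref{lem:house4}, but replaces a single application of \cref{lem:house3} with the iteration scheme above because the first outcome of \cref{lem:house7} only produces a somewhat sparser subgraph rather than an $x$-sparse blockade directly.
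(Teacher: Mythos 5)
Your proposal is correct and takes essentially the same approach as the paper: apply R\"odl's theorem to extract a $\xi$-restricted subgraph, dispose of the case $\overline{F_0}$ sparse via \cref{lem:sparse}, and then run iterative sparsification on \cref{lem:house7}. The only difference is cosmetic: the paper formulates the sparsification step by choosing $y\in[x^d,c]$ \emph{minimal} such that $G$ has a $y$-sparse induced subgraph $F$ with $\abs F\ge y^t\abs G$ (with $t\ge 36d^2$ a fixed exponent), then applies \cref{lem:house7} once and derives a contradiction from minimality if $y\ge x$, whereas you unroll the same idea into an explicit nested sequence $F_0\supseteq F_1\supseteq\cdots$ with $y_i=\xi^{d^i}$ and a halting criterion. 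These are equivalent up to bookkeeping, and your telescoping estimate $\abs{F_i}\gtrsim\theta\,y_i^{16d^3/(d-1)}\abs G$ matches the role of the paper's exponent $t$. One small inaccuracy: the required constant $a$ is of order $d^3$ (the blockade-width constraint gives roughly $a\gtrsim 36d^3$, and the halting constraint $\tfrac{16d^4}{d-1}$ is also $\Theta(d^3)$), not $d^4$ as you state, though this of course does not affect correctness.
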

	\begin{proof}
		Let $c:=4^{-6}$ and $\eta=2^{-5}$.
		Let $d\ge40$ be given by \cref{lem:house7}.
		By \cref{thm:rodl}, there exists $t\ge 36d^2$ such that for every $\overline{P_5}$-free graph $G$, there exists $S\subset V(G)$ with $\abs S\ge c^t\abs G$ such that $G[S]$ is $c$-restricted.
		We shall prove that every $a\ge2dt$ with $2^{a}\ge(\eta c^t)^{-1}$ satisfies the lemma.
		To show this, let $x\in(0,c)$, and let $G$ be $\overline{P_5}$-free.
		If $\abs G<x^{-a}$ then the first outcome of the lemma holds and we are done;
		and so we may assume $\abs G\ge x^{-a}\ge\eta^{-1}$.
		Assume that the second outcome of the lemma does not hold; that is, there is no $k\in[2,1/x]$ such that there is a complete or anticomplete $(k,\abs G/k^a)$-blockade in $G$.
		By the choice of $\theta$, there is a $c$-restricted $S\subset V(G)$ with $\abs S\ge\theta\abs G$.
		If $\overline G[S]$ is $c$-sparse, then since $\overline G[S]$ is $P_5$-free, \cref{lem:sparse} gives an anticomplete $(2,{\eta\abs S})$-blockade in $\overline G[S]$,
		a contradiction since ${\eta\abs S}\ge {\eta c^t\abs G}\ge \abs G/2^a$ by the choice of $a$.
		Hence, $G[S]$ is $c$-sparse.
		Thus, there exists $y\in[x^d,c]$ minimal (note that $x^d< 2^{-d}<2^{-12}=c$) such that $G$ has a $y$-sparse induced subgraph $F$ with $\abs F\ge y^{t}\abs G$.
		\begin{claim}
			\label{claim:decre}
			$y<x$.
		\end{claim}
		\begin{subproof}
			Suppose not.
			By \cref{lem:house7}, either:
			\begin{itemize}
				\item $F$ has a $y^d$-sparse induced subgraph with at least $y^{16d^3}\abs F$ vertices; or
				
				\item there is a complete or anticomplete $(y^{-1/2},y^{18d^3}\abs F)$-blockade in $F$.
			\end{itemize}
			
			Note that $18d^3+t\le dt\le \frac12a$ since $d\ge2$, $t\ge 36d^2\ge \frac{18d^3}{d-1}$, and $a\ge2dt$.
			Thus, if the first bullet holds, then $G$ would have a $y^d$-sparse induced subgraph with at least $y^{16d^3}\abs F\ge y^{16d^3+t}\ge y^{dt}\abs G$ vertices,
			which contradicts the minimality of $y$ since $y^d\ge x^d$.
			If the second bullet holds, then since $y^{18d^3}\abs F\ge y^{18d^3+t}\abs G\ge y^{dt}\abs G\ge y^{a/2}\abs G$,
			there would be a complete or anticomplete $(y^{-1/2},y^{a/2}\abs G)$-blockade in $G$,
			which satisfies the second outcome of the lemma (with $k=y^{-1/2}$) because $x\le y^{1/2}\le c^{1/2}\le\frac12$, a contradiction.
			This proves \cref{claim:decre}.
		\end{subproof}
		
		Since $x^d\le y<x$, we have that $F$ is $x$-sparse and $\abs F\ge y^t\abs G\ge x^{dt}\abs G\ge x^a\abs G$.
		Thus the first outcome of the lemma holds, proving \cref{lem:house}.
	\end{proof}
	Let us now prove the polynomial R\"odl property of $P_5$ (\cref{thm:main}).
	The proof method holds under a more general setting, and is similar to and simpler in part than that of \cref{thm:key}.
	\begin{theorem}
		\label{thm:blocks}
		Let $\eps\in(0,\frac12)$ and $a\ge1$, and let $G$ be a graph.
		Assume that for every induced subgraph $F$ of $G$ with $\abs F\ge\eps^{2a}\abs G$, there exists $k\in[2,1/\eps]$ such that there is a complete or anticomplete $(k,\abs F/k^a)$-blockade in $F$.
		Then $G$ has an $\eps$-restricted induced subgraph with at least $\eps^{3a}\abs G$ vertices.
	\end{theorem}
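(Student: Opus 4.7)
The plan is to adapt the layout argument from the proof of \cref{thm:key}, taking advantage of the stronger hypothesis here (complete or anticomplete, rather than complete or weakly sparse). Call a \emph{good layout} a pair $(\tau,(B_1,\ldots,B_m))$ with $\tau\in\{\text{complete},\text{anticomplete}\}$ such that $(B_1,\ldots,B_m)$ is a blockade in $G$ whose pairs are all of type $\tau$, every $|B_j|\ge\eps^{2a}|G|$, and $\sum_{j\in[m]}|B_j|^{1/a}\ge|G|^{1/a}$. The trivial layout with $m=1$ and $B_1=V(G)$ qualifies for either $\tau$, so such layouts exist; choose one maximizing $m$.

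First I would dispose of the easy case: if $m\ge\lceil 1/\eps\rceil$, trim each $B_j$ to $\lfloor\eps^{2a}|G|\rfloor$ vertices and let $S$ be the union, giving $|S|\ge\eps^{2a-1}|G|\ge\eps^{3a}|G|$ (up to lower-order terms, for $|G|$ large enough). By purity, the maximum degree in $G[S]$ (if $\tau$ is anticomplete) or in $\overline{G[S]}$ (if $\tau$ is complete) is less than $|S|/m\le\eps|S|$, so $S$ is $\eps$-restricted and we are done.

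Otherwise $m<1/\eps$, and the normalization forces some block $B$ to have $|B|\ge\eps^a|G|\ge\eps^{2a}|G|$, so the hypothesis applies to $G[B]$ and yields a complete or anticomplete $(k,|B|/k^a)$-blockade $(D_1,\ldots,D_k)$ with $k\in[2,1/\eps]$. If the returned type coincides with $\tau$, substitute $B$ by $D_1,\ldots,D_k$ to get a strictly longer good layout of type $\tau$: the size condition survives because $|D_\ell|\ge|B|/k^a\ge\eps^{2a}|G|$, and the normalization survives because $k\cdot(|B|/k^a)^{1/a}=|B|^{1/a}$, exactly as in the proof of \cref{thm:key}. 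This contradicts maximality.

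The main obstacle is the case when the hypothesis returns the type opposite to $\tau$. My plan to handle it is to enlarge the notion of good layout so that it carries two disjoint pure blockades $\mathcal A,\mathcal C$, one anticomplete and one complete, and to maximize the combined length $|\mathcal A|+|\mathcal C|$. A refinement of a block in $\mathcal A$ returning anticomplete type extends $\mathcal A$ as above, while one returning complete type produces a complete sub-blockade disjoint from the current $\mathcal C$, which we use to overwrite $\mathcal C$ whenever doing so strictly increases $|\mathcal A|+|\mathcal C|$; the case of refining a block of $\mathcal C$ is symmetric. Verifying that every iteration strictly increases this combined potential under a joint normalization — so the process terminates with $\max(|\mathcal A|,|\mathcal C|)\ge\lceil 1/\eps\rceil$ — is where I expect the bulk of the work, and it is the analogue of the wrong-pair accounting in the proof of \cref{thm:key}.
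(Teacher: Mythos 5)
Your first two reductions match the paper's skeleton: the trivial layout exists, the normalization $\sum|B_j|^{1/a}\ge|G|^{1/a}$ forces a large block when the length is small, and a refinement of matching type strictly grows the length while preserving the normalization. The difficulty is exactly where you flag it, but your proposed repair (two disjoint blockades $\mathcal A,\mathcal C$, maximizing $|\mathcal A|+|\mathcal C|$, ``overwrite $\mathcal C$ whenever it increases the sum'') does not close the gap, for a concrete reason: the hypothesis only returns some $k\in[2,1/\eps]$, and an adversary can always return $k=2$. If at some point you hold $\mathcal A$ of anticomplete type and $\mathcal C$ of complete type with, say, $|\mathcal C|=5$, and every refinement of a block of $\mathcal A$ returns a complete blockade of length $2<|\mathcal C|$ while every refinement of a block of $\mathcal C$ returns an anticomplete blockade of length $2<|\mathcal A|$, then neither ``extend'' nor ``overwrite'' ever increases $|\mathcal A|+|\mathcal C|$, and the process stalls far below $1/\eps$. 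There is also no clean joint normalization: when a complete refinement lives inside one block of $\mathcal A$, discarding the old $\mathcal C$ and installing the new one breaks any additivity between the two potentials, and any attempt to re-normalize across nested blockades loses the $|B|^{1/a}$ bookkeeping that made the single-blockade substitution work.

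The paper resolves this with a different and crucially more flexible device that your proposal does not contain: keep a \emph{single} pure blockade $(A_1,\ldots,A_q)$ in which pairs of blocks are allowed to be of either type, and record the pattern as a graph $J$ on $[q]$. When a block is refined, substitute a complete or edgeless graph for the corresponding vertex of $J$; this strictly increases $q$ no matter which type the refinement returns, and the normalization $\sum|A_j|^{1/a}\ge|G|^{1/a}$ is preserved exactly as in your matching-type case. The point is that $J$ is built by iterated substitution of complete/edgeless graphs, hence is a cograph; once $q\ge\eps^{-2}$, the well-known fact that every $q$-vertex cograph has a clique or stable set of size at least $\sqrt{q}\ge 1/\eps$ extracts a homogeneous set of blocks of the desired length, and the final union is $\eps$-restricted by your own easy-case calculation. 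In short, the missing idea is to track both types simultaneously in one blockade via a cograph pattern, rather than to maintain two monochromatic blockades; without it the type-flipping case has no working potential.
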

	\begin{proof}
		A {\em cograph} is a graph with no induced four-vertex path;
		and it is well known that every $n$-vertex cograph has a clique or stable set of size at least $\sqrt n$.
		Let $q\ge1$ be a maximal integer such that there exist a cograph $J$ with vertex set $[q]$ and a pure $(q,\eps^{3a}\abs G)$-blockade $(A_1,\ldots,A_q)$ in $G$ satisfying:
		\begin{itemize}
			\item for all distinct $i,j\in [q]$, $(A_i,A_j)$ is complete in $G$ if and only if $ij\in E(J)$; and
			
			\item $\sum_{j\in [q]}\abs{A_j}^{1/a}\ge\abs G^{1/a}$.
		\end{itemize}
		\begin{claim}
			\label{claim:length}
			$q\ge\eps^{-2}$.
		\end{claim}
		\begin{subproof}
			Suppose not. We may assume $\abs{A_1}=\max_{j\in[q]}\abs{A_j}$;
			then $q \abs{A_1}^{1/a}\ge\abs G^{1/a}$
			which yields $\abs{A_1}\ge \abs G/q^a\ge \eps^{2a}\abs G$.
			Thus, the hypothesis gives $k\in[2,1/\eps]$ and a complete or anticomplete $(k,\abs {A_1}/k^a)$-blockade $\mac (B_1,\ldots,B_{\ell})$ in $G[A_1]$.
			Let $J'$ be the graph obtained from $J$ by substituting a complete or edgeless graph $K$ for vertex $1$ in $J$,
			such that $\abs K=\ell$ and $K$ is complete if and only if $(B_1,B_2)$ is complete in $G[A_1]$.
			Then $J'$ is a cograph with $\abs{J'}>q$.
			Now $\abs{B_i}\ge\abs{A_1}/k^a\ge \eps^{a}\abs{A_1}\ge\eps^{3a}\abs G$ for all $i\in V(K)$; and
			$\sum_{i\in V(K)}\abs{B_i}^{1/a}\ge k(\abs{A_1}/k^a)^{1/a}=\abs{A_1}^{1/a}$ which implies
			\[\sum_{j\in [q]\setminus\{1\}}\abs{A_j}^{1/a}
			+\sum_{i\in V(K)}\abs{B_i}^{1/a}
			\ge \sum_{j\in [q]}\abs{A_j}^{1/a}
			\ge \abs G^{1/a}.\]
			Consequently $J'$ violates the maximality of $q$, a contradiction.
			This proves \cref{claim:length}.
		\end{subproof}
		
		Since $J$ is a cograph, it has a clique or stable set $I$ with $\abs I\ge\sqrt q\ge1/\eps$.
		For every $j\in I$, let $S_j\subset A_j$ with $\abs{S_j}=\ceil{\eps^{3a}\abs G}$;
		and let $S:=\bigcup_{j\in I}S_j$.
		Then $\abs S=\abs I\cdot\abs{S_j}\ge \eps^{3a}\abs G$ for all $j\in I$.
		If $I$ is a clique in $J$, then $\overline G[S]$ has maximum degree at most $\abs S/\abs I\le\eps\abs S$;
		and if $I$ is a stable set in $J$, then $G[S]$ has maximum degree at most $\abs S/\abs I\le\eps\abs S$.
		Thus $G[S]$ is an $\eps$-restricted induced subgraph of $G$ with at least $\eps^{3a}\abs G$ vertices.
		This proves \cref{thm:blocks}.
	\end{proof}
	
	The proof of \cref{thm:main} now follows shortly.
	\begin{proof}
		[Proof of \cref{thm:main}]
		Let $a\ge1$ be given by \cref{lem:house7}.
		It suffices to show that for every $\eps\in(0,\frac12)$, every $\overline{P_5}$-free graph $G$ has an $\eps$-restricted induced subgraph with at least $\eps^{3a}\abs G$ vertices.
		Suppose not.
		By \cref{lem:house7} with $x=\eps$,
		for every induced subgraph $F$ of $G$ with $\abs F\ge \eps^{2a}\abs G$, either:
		\begin{itemize}
			\item $F$ has an $\eps$-restricted induced subgraph with at least $\eps^{a}\abs F\ge\eps^{3a}\abs G$ vertices; or
			
			\item there is a complete or anticomplete $(k,\abs F/k^a)$-blockade in $F$ for some $k\in[2,1/x]$.
		\end{itemize}
		
		Since the first bullet cannot hold by our supposition, the second bullet holds for every such induced subgraph $F$.
		Then \cref{thm:blocks} implies that $G$ has an $\eps$-restricted induced subgraph with at least $\eps^{3a}\abs G$ vertices, contrary to the supposition.
		This proves \cref{thm:main}.
	\end{proof}
	\section*{Acknowledgements}
	We are grateful to the anonymous referees for helpful comments and suggestions.
	
	\bibliographystyle{abbrv}

\begin{thebibliography}{10}
		
		
		\bibitem{alonplus}
		N. Alon, J. Pach, R. Pinchasi, R. Radoi\v ci\'c and M. Sharir. Crossing patterns of semi-algebraic sets. {\em J. Combinatorial Theory Ser. A}, 111:31--326, 2005.
		
		\bibitem{MR1832443}
		N.~Alon, J.~Pach, and J.~Solymosi.
		\newblock Ramsey-type theorems with forbidden subgraphs.
		\newblock {\em Combinatorica}, 21(2):155--170, 2001.
		\newblock Paul Erd\H{o}s and his mathematics (Budapest, 1999).
		
		\bibitem{bb2022}
		P.~Blanco and M.~Buci{\'c}.
		\newblock Towards the {Erd{\H o}s--Hajnal} conjecture for $P_5$-free graphs.
		{\em Res. Math. Sci.}, 11(1): Paper No. 2, 2024.
		
		\bibitem{MR3343757} N.~Bousquet, A.~Lagoutte, and S.~Thomass{\'e}.  \newblock The {E}rd{\H o}s--{H}ajnal conjecture for paths and antipaths.  \newblock {\em J. Combin. Theory Ser. B}, 113:261--264, 2015.
		
		\bibitem{bfp} M. Buci\'c, J. Fox, and H. T. Pham.
		Equivalence between Erd\H{o}s-Hajnal and polynomial R\"odl and Nikiforov conjectures,
		\href{https://arxiv.org/abs/2403.08303}{\tt arXiv:2403.08303}, 2024.
		
		\bibitem{density1}
		M.~Buci{\'c}, T.~Nguyen, A.~Scott, and P.~Seymour.
		\newblock Induced subgraph density. I. A $\log\log$ step towards {Erd{\H
				o}s--Hajnal}, {\em Int. Math. Res. Not. IMRN},
		(12):9991--10004, 2024.
		
		\bibitem{MR3150572} M.~Chudnovsky.  \newblock The {E}rd{\H o}s--{H}ajnal conjecture - a survey.  \newblock {\em J. Graph Theory}, 75(2):178--190, 2014.
		
		\bibitem{MR2462320}
		M.~Chudnovsky and S.~Safra.
		\newblock The {E}rd{\H o}s--{H}ajnal conjecture for bull-free graphs.
		\newblock {\em J. Combin. Theory Ser. B}, 98(6):1301--1310, 2008.
		
		\bibitem{MR4170220}
		M.~Chudnovsky, A.~Scott, P.~Seymour, and S.~Spirkl.
		\newblock Pure pairs. {I}. {T}rees and linear anticomplete pairs.
		\newblock {\em Adv. Math.}, 375:107396, 20, 2020.
		
		\bibitem{pure2}
		M.~Chudnovsky, A.~Scott, P.~Seymour, and S.~Spirkl.
		Pure pairs. II. Excluding all subdivisions of a graph. {\em Combinatorica}, 41(3):379--405, 2021.
		
		\bibitem{MR4563865}
		M.~Chudnovsky, A.~Scott, P.~Seymour, and S.~Spirkl.
		\newblock Erd{\H o}s--{H}ajnal for graphs with no 5-hole.
		\newblock {\em Proc. Lond. Math. Soc. (3)}, 126(3):997--1014, 2023.
		
		\bibitem{MR3497267} D.~Conlon, J.~Fox, and B.~Sudakov.  \newblock Recent developments in graph {R}amsey theory.  \newblock In {\em Surveys in Combinatorics 2015}, volume 424 of {\em London Math. Soc. Lecture Note Ser.}, pages 49--118. Cambridge Univ. Press, Cambridge, 2015.
		
		\bibitem{MR599767}
		P.~Erd\H{o}s and A.~Hajnal.
		\newblock On spanned subgraphs of graphs.
		\newblock In {\em Contributions To Graph Theory And Its Applications
			({I}nternat. {C}olloq., {O}berhof, 1977) ({G}erman)}, pages 80--96. Tech.
		Hochschule Ilmenau, Ilmenau, 1977.
		
		\bibitem{MR1031262}
		P.~Erd\H{o}s and A.~Hajnal.
		\newblock Ramsey-type theorems.
		\newblock {\em Discrete Appl. Math.}, 25(1-2):37--52, 1989.
		
		\bibitem{foxpach}
		J. Fox and J. Pach, Erd\H os-Hajnal-type results on intersection patterns of geometric objects, in {\em Horizons of Combinatorics} (G.O.H. Katona et al., eds.), Bolyai Society Studies in Mathematics, Springer, pages 79--103, 2008.
		
		\bibitem{MR2455625}
		J.~Fox and B.~Sudakov.
		\newblock Induced {R}amsey-type theorems.
		\newblock {\em Adv. Math.}, 219(6):1771--1800, 2008.
		
		\bibitem{MR1425208}
		A.~Gy{\'a}rf{\'a}s.
		\newblock Reflections on a problem of {E}rd{\H o}s and {H}ajnal.
		\newblock In {\em The Mathematics Of {P}aul {E}rd{\H o}s, {II}}, volume~14 of
		{\em Algorithms Combin.}, pages 93--98. Springer, Berlin, 1997.
		
		\bibitem{2025pc5}
		T. H. Nguyen.
		Polynomial $\chi$-boundedness for excluding $P_5$,
		\href{https://arxiv.org/abs/2512.24907}{\tt arXiv:2512.24907}, 2025.
		
		\bibitem{density3}
		T.~Nguyen, A.~Scott, and P.~Seymour.           
		Induced subgraph density. III. Cycles and subdivisions,
		\href{https://arxiv.org/abs/2307.06379}{\tt arXiv:2307.06379}, 2023. 
		
		
		\bibitem{density4}
		T.~Nguyen, A.~Scott, and P.~Seymour.
		\newblock Induced subgraph density. IV. New graphs with the Erd{\H
			o}s--{H}ajnal property, \href{https://arxiv.org/abs/2307.06455}{\tt
			arXiv:2307.06455}, 2023.
		
		\bibitem{density5}
		T.~Nguyen, A.~Scott, and P.~Seymour.
		\newblock Induced subgraph density. V. All paths approach Erd{\H o}s--{H}ajnal,
		\href{https://arxiv.org/abs/2307.15032}{\tt arXiv:2307.15032}, 2023.
		
		\bibitem{density6}
		T.~Nguyen, A.~Scott, and P.~Seymour.
		\newblock Induced subgraph density. {VI}. {B}ounded {VC}-dimension.
		\newblock {\em Adv. Math.}, 482(A):Paper No. 110601, 2025.
		
		
		\bibitem{pt} J. Pach and I. Tomon. Erd\H os-Hajnal-type results for monotone paths. {\em J. Combin. Theory Ser. B}, 151:21--37, 2021.
		
		\bibitem{MR837962}
		V.~R\"{o}dl.
		\newblock On universality of graphs with uniformly distributed edges.
		\newblock {\em Discrete Math.}, 59(1-2):125--134, 1986.
		
		\bibitem{scott2022}
		A.~Scott.
		\newblock Graphs of large chromatic number.
		\newblock In {\em Proceedings of the {I}nternational {C}ongress of
			{M}athematicians 2022. {V}ol. {VI}.}, pages 4660--4681. EMS Press, 2023.
		
		\bibitem{tomon}
		I. Tomon. String graphs have the Erd\H os-Hajnal property. {\em J. Eur. Math. Soc. (JEMS)}, 26(1):275--287, 2024.
		
	\end{thebibliography}

\end{document}